\providecommand{\U}[1]{\protect\rule{.1in}{.1in}}
\newtheorem{theorem}{Theorem}
\newtheorem{corollary}{Corollary}
\newtheorem{lemma}{Lemma}
\newtheorem{proposition}{Proposition}
\newtheorem{remark}{Remark}
\newenvironment{proof}[1][Proof]{\textbf{#1.} }{\ \rule{1em}{1em}}
\begin{document}

\title{Small BGK waves and nonlinear Landau damping}
\author{Zhiwu Lin and Chongchun Zeng\\School of Mathematics\\Georgia Institute of Technology\\Atlanta, GA 30332, USA}
\date{}
\maketitle

\begin{abstract}
Consider $1$D Vlasov-poisson system with a fixed ion background and periodic
condition on the space variable. First, we show that for general homogeneous
equilibria, within any small neighborhood in the Sobolev space $W^{s,p}%
\ \left(  p>1,s<1+\frac{1}{p}\right)  \ $of the steady distribution function,
there exist nontrivial travelling wave solutions (BGK waves) with arbitrary
minimal period and traveling speed. This implies that nonlinear Landau damping
is not true in $W^{s,p}\left(  s<1+\frac{1}{p}\right)  \ $space for any
homogeneous equilibria and any spatial period. Indeed, in $W^{s,p}$ $\left(
s<1+\frac{1}{p}\right)  \ $neighborhood of any homogeneous state, the long
time dynamics is very rich, including travelling BGK waves, unstable
homogeneous states and their possible invariant manifolds. Second, it is shown
that for homogeneous equilibria satisfying Penrose's linear stability
condition, there exist no nontrivial travelling BGK waves and unstable
homogeneous states in some $W^{s,p}$ $\left(  p>1,s>1+\frac{1}{p}\right)
\ $neighborhood. Furthermore, when $p=2,$we prove that there exist no
nontrivial invariant structures in the $H^{s}$ $\left(  s>\frac{3}{2}\right)
$ neighborhood$~$of stable homogeneous states. These results suggest the long
time dynamics in the $W^{s,p}$ $\left(  s>1+\frac{1}{p}\right)  $ and
particularly, in the $H^{s}$ $\left(  s>\frac{3}{2}\right)  $ neighborhoods of
a stable homogeneous state might be relatively simple. We also demonstrate
that linear damping holds for initial perturbations in very rough spaces, for
linearly stable homogeneous state. This suggests that the contrasting dynamics
in $W^{s,p}$ spaces with the critical power $s=1+\frac{1}{p}\ $is a trully
nonlinear phenomena which can not be traced back to the linear level.

\end{abstract}

\section{Introduction}

Consider a one-dimensional collisionless electron plasma with a fixed
homogeneous neutralizing ion background. The fixed ion background is a good
physical approximation since the motion of ions is much slower than electrons.
But we consider fixed ion mainly to simplify notations and the main results in
this paper are also true for electrostatic plasmas with two or more species.
The time evolution of such electron plasmas can be modeled by the
Vlasov-Poisson system
\begin{subequations}
\label{vpe}
\begin{equation}
\frac{\partial f}{\partial t}+v\frac{\partial f}{\partial x}-E\frac{\partial
f}{\partial v}=0,\label{vlasov}%
\end{equation}
\begin{equation}
\,\,\,\,\ \frac{\partial E}{\partial x}=-\int_{-\infty}^{+\infty
}fdv+1,\label{poisson}%
\end{equation}
\end{subequations}
where $f(x,v,t)$ is the electron distribution function, $E\left(  x,t\right)
\,\ $the electric field, and $1$ is the ion density. The one-dimensional
assumption is proper for a high temperature and dilute plasma immersed in a
constant magnetic field oriented in the $x$-direction. For example, recent
discovery by satellites of electrostatic structures near geomagnetic fields
can be justified by using such Vlasov-Poisson models (\cite{krasovsky-et-04},
\cite{muschietti-et-99}). We assume: 1) $f\left(  x,v,t\right)  \geq0$ and
$E\left(  x,t\right)  $ are $T-$periodic in $x$. 2) Neutral condition:
$\int_{0}^{T}\int_{\mathbf{R}}f\left(  x,v,0\right)  dxdv=T$. 3) $\int_{0}%
^{T}E\left(  x,t\right)  dx=0,\ $so $E\left(  x,t\right)  =-\partial_{x}%
\phi\left(  x,t\right)  $, where the electric potential $\phi\left(
x,t\right)  $ is $T-$periodic in $x$. Since $\int\int f\left(  x,v,t\right)
\ dxdv$ is an invariant, the neutral condition 2) is preserved for all time.
The condition 3) ensures that $E\left(  t\right)  $ is determined uniquely by
$f\left(  t\right)  $ from (1b) and the system (\ref{vpe}) can be considered
to be an evolution equation of $f\ $only. It is shown in
\cite{klimas-cooper83} that with condition 3), the system (\ref{vpe}) is
equivalent to the following one-dimensional Vlasov-Maxwell system%

\[
\frac{\partial f}{\partial t}+v\frac{\partial f}{\partial x}-E\frac{\partial
f}{\partial v}=0,
\]%
\[
\frac{\partial E}{\partial x}=-\int_{-\infty}^{+\infty}fdv+1,
\]%
\[
\frac{\partial E}{\partial t}=\int_{\mathbf{R}}vf\left(  x,v,t\right)  dv-U,
\]
where $U$ is the bulk velocity of the ion background. The system (\ref{vpe})
is non-dissipative and time-reversible. It has infinitely many equilibria,
including the homogeneous states $\left(  f_{0}\left(  v\right)  ,0\right)  $
where $f_{0}\left(  v\right)  $ is any nonnegative function satisfying
$\int_{\mathbf{R}}f_{0}\left(  v\right)  dv=1$.

In 1946, Landau \cite{landau}, looking for analytical solutions of the
linearized Vlasov-Poisson system around Maxwellian $\left(  e^{-\frac{1}%
{2}v^{2}},0\right)  $, pointed out that the electric field is subject to time
decay even in the absence of collisions. The effect of this Landau damping, as
it is subsequently called, plays a fundamental role in the study of plasma
physics. However, Landau's treatment is in the linear regime; that is, only
for infinitesimally small initial perturbations. Despite many numerical,
theoretical and experimental efforts, no rigorous justification of the Landau
damping has been given in a nonlinear dynamical sense. In the past decade,
there has been renewed interest \cite{ishenko} \cite{manfredi97}
\cite{lancellotti-dorning} \cite{glassey-schaeffer} \cite{glassey-schaeffer-2}
\cite{zhou-et-98} \cite{caglioti} \cite{hwang} \cite{villani09}
\cite{valentini-et-05} as well as controversy about the Landau damping. In
\cite{caglioti} \cite{hwang}, it was shown that there exist certain analytical
perturbations for which electric fields decay exponentially in the nonlinear
level. More recently, in \cite{villani09} nonlinear Landau damping was shown
for general analytical perturbations of stable equilibria with linear
exponential decay. For non-analytic perturbations, the linear decay rate of
electric fields is known to be only algebraic (i.e. \cite{weizner63}) and the
nonlinear damping is more difficult to justify if it is true. Moreover, in the
nonlinear regime, it has been know (\cite{oneil65}) that the damping can be
prevented by particles trapped in the potential well of the wave. Such
particle trapping effect is ignored in Landau's linearized analysis as well as
other physically equivalent linear theories (\cite{case60}, \cite{van-kampen}%
), which assume that the small amplitude of waves have a negligible effect on
the evolution of distribution functions. As early as in 1949, Bohm and Gross
(\cite{bohn-gross}) already recognized the importance of particle trapping
effects and the possibility of nonlinear travelling waves of small but
constant amplitude. In 1957, Bernstein, Greene and Kruskal (\cite{bgk})
formalized the ideas of Bohm and Gross and found a general class of exact
nonlinear steady imhomogeneous solutions of the Vlasov-Poisson system. Since
then, such steady solutions have been known as BGK modes, BGK waves or BGK
equilibria. The nontrivial steady waves of this type are made possible by the
existence of particles trapped forever within the electrostatic potential
wells of the wave. The existence of such undamped waves in any small
neighborhood of an equilibrium will certainly imply that nonlinear damping is
not true.

Furthermore, numerical simulations \cite{manfredi97} \cite{deimo-zweifel}
\cite{rosenbluth-et-98} \cite{driscoll-et-04} \cite{brunetti-et-00} indicate
that for certain small initial data near a stable homogeneous state including
Maxwellian, there is no decay of electric fields and the asymptotic state is a
BGK wave or superposition of BGK waves which were formally constructed in
\cite{buchanan-dorning-mBGK}. Moreover, BGK waves also appear as the
asymptotic states for the saturation of an unstable homogeneous state
(\cite{amstrong-monto-67}). These suggest that small BGK waves play important
role in understanding the long time behaviors of Vlasov-Poisson system, near
homogeneous equilibria. In this paper, we provide a sharp characterization of
the Sobolev spaces in which small BGK waves exist in any small neighborhood of
a homogeneous equilibrium. Denote the fractional order Sobolev spaces by
$W^{s,p}\left(  \mathbf{R}\right)  ~$or $W_{x,v}^{s,p}\left(  \left(
0,T\right)  \times\mathbf{R}\right)  $ with $p\geq1,s\geq0$. These spaces are
the interpolation spaces (see \cite{adams}, \cite{triebel}) of $L^{p}$ space
and Sobolev space $W^{m,p}$ $\left(  m\ \text{positive integer}\right)  $.

\begin{theorem}
\label{thm-existence}Assume the homogeneous distribution function
$f_{0}\left(  v\right)  \in W^{s,p}\left(  \mathbf{R}\right)  $ $\left(  p>1,s
\in[0, 1+\frac1p)\right)  $ satisfies%
\[
f_{0}\left(  v\right)  \geq0,\ \int f_{0}\left(  v\right)  dv=1,\ \int
v^{2}f_{0}\left(  v\right)  <+\infty.
\]
Fix $
T>0$ and $c\in\mathbf{R}$. Then for any $\varepsilon>0$, there exist
travelling BGK wave solutions of the form $\left(  f_{\varepsilon}\left(
x-ct,v\right)  ,E_{\varepsilon}\left(  x-ct\right)  \right)  $ to (\ref{vpe}),
such that$\ \left(  f_{\varepsilon}\left(  x,v\right)  ,E_{\varepsilon}\left(
x\right)  \right)  $ has minimal period $T$ in $x$,$\ f_{\varepsilon}\left(
x,v\right)  \geq0,$ $E_{\varepsilon}\left(  x\right)  $ is not identically
zero, and
\begin{equation}
\ \left\Vert f_{\varepsilon}-f_{0}\right\Vert _{L_{x,v}^{1}}+\ \int_{0}%
^{T}\int_{\mathbf{R}}v^{2}\left\vert f_{\varepsilon}\left(  x,v\right)
-\ f_{0}\left(  v\right)  \right\vert dxdv+\left\Vert f_{\varepsilon}%
-f_{0}\right\Vert _{W_{x,v}^{s,p}}<\varepsilon.\ \label{norm-thm-stability}%
\end{equation}

\end{theorem}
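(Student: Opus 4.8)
The plan is to pass to the moving frame $\xi = x-ct$ and reduce the entire construction to a single scalar second-order ODE for the electric potential, then to exploit the classical freedom in the trapped-particle distribution to pin down the minimal period, all while tracking the regularity of the profile at the separatrix, which is exactly what produces the threshold $s<1+\frac1p$. In the moving frame a travelling wave is a steady solution of $(v-c)\partial_\xi F - E\partial_v F = 0$ together with $\partial_\xi E = -\int F\,dv + 1$ and $E=-\phi'$. The method of characteristics shows that the particle energy $e=\tfrac12(v-c)^2-\phi(\xi)$ is conserved, so $F$ must be constant on level sets of $e$: in the passing region $\{e>0\}$ the sign of $v-c$ is invariant and $F$ may be taken as two functions $\mu_\pm(e)$ on $\{v\gtrless c\}$, while in the trapped region $\{e<0\}$ the orbits are closed, forcing $F=g(e)$ to be a single function (automatically even in $v-c$). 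Poisson then collapses to $\phi'' = \rho(\phi)-1=:h(\phi)$, where $\rho(\phi)=\int F\,dv$ depends only on the local potential value through $\mu_\pm$ and $g$.

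Next I would choose the profiles so that the wave is a genuine small perturbation of $f_0$ with the sharp regularity. I would take $\mu_\pm(e)=f_0(c\pm\sqrt{2e})$ for $e$ bounded away from $0$, so that $F\equiv f_0$ away from the separatrix, and then smoothly modify $\mu_\pm$ to be $C^1$ in $e$ on a shrinking interval $0\le e<e_0$ with $\mu_\pm(0)=f_0(c)$; the point of this modification is to remove the $\sqrt e$ branch carried by $f_0(c\pm\sqrt{2e})$, which would otherwise give $F$ a square-root singularity in $v$ at the separatrix and cap the regularity at $s<\tfrac12+\tfrac1p$. Choosing a nonnegative $g$ with $g(0)=f_0(c)$, the only unavoidable non-smoothness of $F$ in $v$ is then a \emph{corner} at $\{e=0\}$, arising from the slope mismatch $g'(0)\ne \mu_\pm'(0)$; since $de/dv=v-c\ne 0$ there, $F$ is Lipschitz but not $C^1$ across the separatrix. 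A corner lies in $W^{s,p}$ precisely for $s<1+\frac1p$, which is exactly the regularity asserted.

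The minimal period is controlled through the trapped particles. I would expand $h(\phi)$ for small $\phi$, separating the (essentially fixed) passing contribution from the trapped contribution
\[
\rho_{\text{trap}}(\phi)=\sqrt{2}\int_{-\phi}^{0} g(e)\,(e+\phi)^{-1/2}\,de,
\]
and analyze the conservative ODE $\phi''=h(\phi)$ by phase-plane quadrature. Periodic orbits encircle a center located near $\phi=0$ (where $\rho=1$ by the normalization of $f_0$), and the period of small oscillations is governed by the linearized coefficient $-h'$ at the center. The classical BGK freedom is that the trapped slope $g'(0)$ can be tuned to set this coefficient to $-(2\pi/T)^2$, forcing the minimal period of small-amplitude orbits to be exactly $T$ (a full oscillation, not a sub-multiple), making $E_\varepsilon\not\equiv 0$; note this tuning is precisely what demands $g'(0)\ne\mu_\pm'(0)$ and hence creates the corner above. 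Finally I would estimate the three norms: $f_\varepsilon-f_0$ is supported in the thin separatrix layer $\{|e|\lesssim e_0\}$, of $v$-width $\sim\sqrt{e_0}$ and amplitude $\to 0$, so the $L^1_{x,v}$, the $v^2$-weighted $L^1$, and the $W^{s,p}_{x,v}$ norms are all $O(\varepsilon)$ as the potential amplitude is taken small; nonnegativity follows from $g\ge 0$ and $\mu_\pm$ close to $f_0\ge 0$.

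The hard part will be the degenerate limit. Making the perturbation small forces the potential amplitude, the trapped layer, and the non-smooth ($\sqrt{\phi}$-type) part of $h$ near $\phi=0$ to degenerate simultaneously, so the center becomes nearly singular. The main obstacle is therefore to carry out the quadrature for the period uniformly in this singular limit, showing that the trapped freedom $g'(0)$ still pins the minimal period to exactly $T$ while keeping $\phi$ of small amplitude, and to show simultaneously that the corner's contribution to $\|f_\varepsilon-f_0\|_{W^{s,p}}$ stays finite and arbitrarily small for every $s<1+\frac1p$ but would diverge at $s=1+\frac1p$. Controlling this coupling between the period condition, the smallness, and the sharp regularity is the crux of the argument.
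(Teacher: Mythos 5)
Your reduction to the pendulum equation $\phi''=h(\phi)$ and your treatment of the separatrix regularity follow the classical BGK/Dorning--Holloway route, but the step on which the whole theorem rests --- pinning the minimal period to an \emph{arbitrary} prescribed $T$ --- does not work as proposed. You claim the trapped slope $g'(0)$ can be tuned to set the linearized coefficient $-h'$ at the center equal to $-(2\pi/T)^2$. Compute the trapped contribution to the charge density: $\rho_{\mathrm{trap}}(\phi)=\sqrt2\int_{-\phi}^{0}g(e)(e+\phi)^{-1/2}de = 2\sqrt{2\phi}\,g(0)-\tfrac{2\sqrt2}{3}g'(0)\phi^{3/2}+\cdots$, so its contribution to $h'(\phi)$ is $O(\phi^{1/2})$ with coefficient proportional to the slope mismatch. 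Since closeness of $f_\varepsilon$ to $f_0$ in $L^1$ forces the potential amplitude (hence the center $\phi_*$) to tend to $0$, this contribution vanishes in the limit and $h'(\phi_*)\to -P\int f_0'(v)/(v-c)\,dv$, \emph{independently of $g$}. The limiting period is therefore $2\pi\big(P\int f_0'/(v-c)\,dv\big)^{-1/2}$, which for general $f_0,c$ is not $T$ and need not even be real (for the Maxwellian and $c=0$ the principal value is negative, so there is no center and no periodic orbit at all). Trying to rescue this by taking $g'(0)\sim\phi_*^{-1/2}$ large makes $\|\partial_v(f_\varepsilon-f_0)\|_{L^p}$ of the separatrix layer blow up like $\phi_*^{-1/2+1/(2p)}$, destroying smallness already in $W^{1,p}$, which the theorem requires. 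This is precisely the obstruction the paper's Remark on Dorning--Holloway points out: that method only yields periods near $2\pi/\sqrt{\nu(c)}$ and nothing below $2\pi/\sqrt{\|f_0\|_{W^{2,p}}}$.

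The missing idea is that the period must be adjusted by modifying the \emph{homogeneous background}, not the trapped population. The paper first replaces $f_0$ by $f_{\gamma,\delta}=\frac{1}{1+C_0\gamma^2}\big[f_0+\frac{\gamma}{\delta}F(\frac{v}{\gamma\delta})\big]$ and uses an intermediate-value argument in $\delta$ to force the bifurcation condition $\int f_{\gamma,\delta}'(v)/(v-c)\,dv=(2\pi/T)^2$, while $\gamma\to0$ makes the modification small: the scaling identity $\|\,|D|^{s}\partial_v(\frac{\gamma}{\delta}F(\frac{\cdot}{\gamma\delta}))\|_{L^p}=\gamma^{1-s+\frac1p}\delta^{-1-s+\frac1p}\|\,|D|^sF'\|_{L^p}$ is exactly where $s<1+\frac1p$ enters (together with a second use in the evenization of $f_0$ near $v=c$). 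Consequently the paper's waves are \emph{smooth} functions of the energy across the separatrix --- there is no corner --- and the critical exponent reflects the failure of $\int f_0'/(v-c)\,dv$ to be controlled by $\|f_0\|_{W^{s,p}}$ when $s<1+\frac1p$, not the Sobolev regularity of a kink at $\{e=0\}$. Your corner heuristic, while a correct computation of the regularity of a Lipschitz kink, is therefore attached to the wrong mechanism for this theorem.
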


The first two terms in (\ref{norm-thm-stability}) imply that the BGK wave is
close to the homogeneous state $\left(  f_{0},0\right)  \ $in the norms of
total mass and energy. When $p>1,s=1,$ the fractional Sobolev space is
equivalent to the usual Sobolev space $W^{1,p}$. The conclusions in Theorem
\ref{thm-existence} are also true for the Sobolev space $W_{x,v}^{1,1}$ by the
same proof. Above theorem immediately implies that nonlinear Landau damping is
\textit{not true} for perturbations in any $W^{s,p}$ $\left(  s<1+\frac{1}%
{p}\right)  \ $space, for any homogeneous equilibrium in
$W^{s,p}\ $ and any spatial period.

As a corollary of the proof, we show that there exist unstable homogeneous
states in $W^{s,p}\left(  \mathbf{R}\right)  $ $\left(  s<1+\frac{1}%
{p}\right)  \ $neighborhood of any homogeneous equilibrium.

\begin{corollary}
\label{cor-unsatble}Under the assumption of Theorem \ref{thm-existence}, for
any fixed $T>0$, $\ \exists$ $\varepsilon_{0}>0$, such that for any
$0<\varepsilon<\varepsilon_{0}$, there exists a homogeneous state $\left(
f_{\varepsilon}\left(  v\right)  ,0\right)  \ $which is linearly unstable
under perturbations of $x-$period $T$,
\[
f_{\varepsilon}\left(  v\right)  \geq0,\ \int_{\mathbf{R}}f_{\varepsilon
}\left(  v\right)  \ dv=1,
\]
and
\[
\left\Vert f_{\varepsilon}\left(  v\right)  -f_{0}\left(  v\right)
\right\Vert _{L^{1}\left(  \mathbf{R}\right)  }+\int_{\mathbf{R}}%
v^{2}\left\vert f_{\varepsilon}\left(  v\right)  -\ f_{0}\left(  v\right)
\right\vert dv+\left\Vert f_{\varepsilon}\left(  v\right)  -f_{0}\left(
v\right)  \right\Vert _{W^{s,p}\left(  \mathbf{R}\right)  }<\varepsilon.\
\]

\end{corollary}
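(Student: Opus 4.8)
The plan is to read the unstable homogeneous state off the very construction behind Theorem~\ref{thm-existence} and to certify its instability with Penrose's criterion. For a homogeneous state $(g,0)$ and perturbations of $x$-period $T$, set $k=\frac{2\pi}{T}$ and recall that the linearized Vlasov--Poisson flow carries a mode $e^{ik(x-zt)}$ exactly when the dispersion function
\[
\mathcal{P}(z)=1+\frac{1}{k^{2}}\int_{\mathbf{R}}\frac{g'(v)}{v-z}\,dv
\]
vanishes, and that $(g,0)$ is linearly unstable when $\mathcal{P}$ has a root with $\operatorname{Im}z>0$. Penrose's criterion furnishes a convenient sufficient condition: if $g$ has a local minimum at a point $v_{*}$ and
\[
\int_{\mathbf{R}}\frac{g(v)-g(v_{*})}{(v-v_{*})^{2}}\,dv>k^{2},
\]
then such a root exists. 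It therefore suffices to manufacture, arbitrarily close to $f_{0}$, a profile $f_{\varepsilon}$ with a local minimum at which this integral exceeds $k^{2}$.

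I would build $f_{\varepsilon}$ by carving a single narrow well into $f_{0}$. Fix a Lebesgue point $v_{*}$ of $f_{0}$ with $f_{0}(v_{*})>0$, and on an interval of width $O(\delta)$ around $v_{*}$ replace $f_{0}$ smoothly by an explicit well of depth $h$ and width $\delta$ with a rounded quadratic bottom, compensating the lost mass by a broad $O(h\delta)$ bump placed far away so that $\int f_{\varepsilon}=1$ and $f_{\varepsilon}\ge 0$ for $\delta$ small. Because the well will be sharpened so that $h/\delta$ dominates the oscillation of $f_{0}$ over the window, $v_{*}$ becomes a strict local minimum of $f_{\varepsilon}$; in particular the integrand $(f_{\varepsilon}(v)-f_{\varepsilon}(v_{*}))/(v-v_{*})^{2}$ is nonnegative near $v_{*}$, so the Penrose integral is a well-defined element of $(-\infty,+\infty]$. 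Splitting that integral into the wall region $|v-v_{*}|\sim\delta$ and the rest shows it is bounded below by a term of order $h/\delta$ minus an $O(1)$ tail, hence exceeds $k^{2}$ once $\delta$ is small.

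The decisive point, and the sole place where the hypothesis $s<1+\frac1p$ is used, is the competition between the two scales. A single change of variables gives $\|f_{\varepsilon}-f_{0}\|_{W^{s,p}}\sim h\,\delta^{\frac1p-s}$, uniformly across $0\le s<1+\frac1p$ whether $s$ lies below or above $1$, whereas the Penrose integral grows like $h/\delta$. Writing $h=\delta^{\alpha}$, the demand that the $W^{s,p}$ deviation tend to $0$ reads $\alpha>s-\frac1p$, the demand that the Penrose integral blow up reads $\alpha<1$, and smallness of the well requires $\alpha>0$; these are compatible for small $\delta$ precisely when $s-\frac1p<1$, i.e.\ when $s<1+\frac1p$. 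Fixing any admissible $\alpha$ and letting $\delta\to0$ then drives $\|f_{\varepsilon}-f_{0}\|_{W^{s,p}}\to0$ while keeping $(f_{\varepsilon},0)$ linearly unstable; the $L^{1}$ and energy deviations, of orders $h\delta$ and $h\delta\,(1+v_{*}^{2})$, vanish automatically, so all three quantities in the statement fall below $\varepsilon$. I expect the only genuine obstacle to be the bookkeeping: keeping $f_{\varepsilon}\ge0$, $\int f_{\varepsilon}=1$, and the strict local-minimum structure at $v_{*}$ intact as the well is sharpened and $f_{0}$ is smoothly overridden on the window. Since this is exactly the concentrated, rough-but-small perturbation already produced in the proof of Theorem~\ref{thm-existence}, the corollary follows from that construction.
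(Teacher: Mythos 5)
Your overall strategy --- perturb $f_{0}$ by a profile of height $h$ and width $\delta$ so that the Penrose integral gains $h/\delta$ while the $W^{s,p}$ norm only pays $h\delta^{\frac1p-s}$ --- is exactly the scaling mechanism behind the paper's proof (there the added piece is $\frac{\gamma}{\delta}F(\frac{v}{\gamma\delta})$), and your identification of $s=1+\frac1p$ as the crossover is correct. But two steps, as written, do not hold up.

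First, the instability certificate you invoke is not a correct statement of Penrose's criterion. Having a local minimum $v_{*}$ of $f_{\varepsilon}$ with $\int\frac{f_{\varepsilon}(v)-f_{\varepsilon}(v_{*})}{(v-v_{*})^{2}}\,dv>k^{2}$ does \emph{not} imply instability at the wavenumber $k$: instability at $k$ requires the winding number of the Nyquist curve $Z(\xi+i0)$ about $k^{2}$ to be positive, i.e.\ the number of local minima with $\int\frac{f_{\varepsilon}'}{v-v_{i}}dv>k^{2}$ must exceed the number of local maxima with the same property. This is precisely the ``stability gap'' phenomenon recorded in Remark \ref{rmk-penrose}(3): for $k$ in such a gap one has a minimum whose Penrose integral exceeds $k^{2}$ and yet no unstable mode. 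So after carving your well you must still account for \emph{all} critical points of $f_{\varepsilon}$ --- those of $f_{0}$ (or its mollification), the shoulders of the well, and the min/max pair created by the compensating bump --- and verify that none of the maxima contributes a crossing above $k^{2}$ that cancels the $+1$ from the well bottom. This is doable (the old profile contributes winding $\ge 0$, the shoulders have large negative Penrose integrals, and the bump's pair can be arranged not to straddle $k^{2}$), but it is the actual content of the instability claim and is absent from your argument. The paper avoids this bookkeeping entirely by a different route: it reuses the two-parameter family from Proposition \ref{propo-existence} to land on a state with a \emph{neutral} mode exactly at $k_{0}=2\pi/T$, and then rescales $f_{\delta}(v)=\frac1\delta f_{\varepsilon}(v_{\varepsilon}+\frac{v-v_{\varepsilon}}{\delta})$ with $\delta$ near $1$ so that $k_{0}$ slides into the open interval of unstable wavenumbers adjacent to that neutral mode (Lemma \ref{lemma-penrose} and Remark \ref{rmk-penrose}(2)).

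Second, your lower bound ``$h/\delta$ minus an $O(1)$ tail'' is false for $f_{0}$ that is merely $W^{s,p}$ with $s<1+\frac1p$. Outside the window the integrand is $\frac{f_{0}(v)-f_{\varepsilon}(v_{*})}{(v-v_{*})^{2}}$, and since $f_{0}$ is at best H\"older continuous of exponent $\alpha_{H}=s-\frac1p<1$, the region $\delta<|v-v_{*}|\lesssim 1$ can contribute a negative term of size $h^{1-1/\alpha_{H}}$ (the integrand stays bounded below only out to $|v-v_{*}|\sim h^{1/\alpha_{H}}$). Comparing exponents with $h=\delta^{\alpha}$, this loss beats the gain $h/\delta$ precisely when $\alpha>\alpha_{H}$, which is the regime you need for the $W^{s,p}$ smallness --- so the two requirements are incompatible if you carve directly into $f_{0}$. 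Moreover the Nyquist analysis itself needs $f_{\varepsilon}'$ locally H\"older (Remark \ref{rmk-penrose}(1)), which fails where $f_{\varepsilon}=f_{0}$. Both problems are cured the same way the paper cures them: first mollify $f_{0}$ (Lemma \ref{lemma-approxi}), fixing the mollification scale as a function of $\varepsilon$ so that the Penrose integral of the smoothed profile is a fixed finite number, and only then introduce the two-scale perturbation and send $\delta\to0$. Your proof needs this ordering of quantifiers made explicit before the arithmetic closes.
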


By above Corollary and Remark \ref{rmk-double-period} following the proof of
Theorem \ref{thm-existence}, in $W_{x,v}^{s,p}\ \left(  s<1+\frac{1}%
{p}\right)  \ $neighborhood of any homogeneous state there exist lots of
unstable homogeneous states and unstable nontrivial BGK waves. In a work in
progress, we are constructing stable and unstable manifolds near an unstable
equilibrium of Vlasov-Poisson system by extending our work
(\cite{lin-zeng-euler-invar}) on invariant manifolds of Euler equations. Such
(possible) invariant manifolds might reveal more complicated global invariant
structures such as heteroclinic or homoclinic orbits. Moreover, in some
physical reference (\cite{buchanan-dorning-mBGK}), small BGK waves are
formally shown to follow a nonlinear superposition principle to form
time-periodic or quasi-periodic orbits. We note that Maxwellian or any
homogeneous equilibria $f_{0}\left(  v\right)  =\mu\left(  \frac{1}{2}%
v^{2}\right)  $ with $\mu$ monotonically decreasing, were shown by Newcomb in
1950s (see Appendix I, pp. 20-21 of \cite{bernstein58-newcomb}) to be
nonlinearly stable in the norm $\left\Vert f\right\Vert _{L^{2}}$. So our
result suggests, in particular, that in any invariant small $L^{2}$
neighborhood of Maxwellian, the long time dynamical behaviors are very rich.

The following Theorem shows that there exist no nontrivial BGK waves near a
stable homogeneous state in $W_{x,v}^{s,p}$ space when $p>1,\ s>1+\frac{1}%
{p}.$

\begin{theorem}
\label{thm-non-existence} Assume $f_{0}\left(  v\right)  \in W^{s,p}\left(
\mathbf{R}\right)  $ $\left(  p>1,s>1+\frac{1}{p}\right)  .\ $Let $S=\left\{
v_{i}\right\}  _{i=1}^{l}$ be the set of all extrema points of $f_{0}.\ $Let
$0<T_{0}\leq+\infty$ be defined by
\begin{equation}
\left(  \frac{2\pi}{T_{0}}\right)  ^{2}=\max\left\{  0,\max_{v_{i}\in S}%
\int\frac{f_{0}^{\prime}\left(  v\right)  }{v-v_{i}}dv\right\}  .
\label{defn-T-0}%
\end{equation}
Then for any $T<T_{0}$, $\exists$ $\varepsilon_{0}\left(  T\right)  >0$, such
that there exist no nontrivial travelling wave solutions $\left(  f\left(
x-ct,v\right)  ,E\left(  x-ct\right)  \right)  \ $to (\ref{vpe}) for any
$c\in\mathbf{R}$, satisfying that$\ \left(  f\left(  x,v\right)  ,E\left(
x\right)  \right)  $ has period $T$ in $x$,$\ E\left(  x\right)  $ not
identically $0$,
\[
\int_{0}^{T}\int_{\mathbf{R}}v^{2}f\left(  x,v\right)  dvdx<\infty,\text{
(assumption of finite energy)}%
\]
$\ $and$\ \left\Vert f-f_{0}\right\Vert _{W_{x,v}^{s,p}}<\varepsilon_{0}.$
\end{theorem}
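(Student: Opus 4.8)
The plan is to reduce the non-existence of small BGK waves to a statement about the linearized operator near the stable homogeneous state. A travelling BGK wave $(f(x-ct,v),E(x-ct))$ in the moving frame $X=x-ct$ satisfies the steady Vlasov equation $(v-c)\partial_X f - E\partial_v f=0$, so $f$ is constant along the characteristics (the particle trajectories) of the steady problem, i.e. $f$ is a function of the particle energy $e=\tfrac12(v-c)^2-\phi(X)$ (and of the sign of $v-c$ on the untrapped region), where $E=-\phi'$. First I would set up this BGK ansatz precisely and derive the self-consistent nonlinear Poisson equation $\phi''(X)=\int f\,dv-1=:h(\phi)$ governing the potential. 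The strategy is to show that, under the stated Sobolev regularity $s>1+\tfrac1p$ and the period bound $T<T_0$, the only periodic solution of this equation with the imposed closeness is $\phi\equiv\mathrm{const}$, forcing $E\equiv 0$.

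The key quantitative step is a linear-stability / Poincaré-type inequality. Writing $f=f_0+g$ and $\phi=\bar\phi+\psi$ with $\psi$ having zero mean and period $T$, I would linearize the relation $\int f\,dv-1$ about the homogeneous state. The crucial observation is that the regularity $s>1+\tfrac1p$ guarantees (by Sobolev embedding in the $v$ variable) that $f_0'$ is continuous and bounded, so the function $\int \frac{f_0'(v)}{v-v_i}\,dv$ appearing in \eqref{defn-T-0} is well defined, and more importantly the ``reaction'' term in the linearized Poisson equation is controlled by the quantity $\max_{v_i\in S}\int \frac{f_0'(v)}{v-v_i}\,dv=(2\pi/T_0)^2$. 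The plan is to prove an estimate of the form
\begin{equation}
\int_0^T (\psi')^2\,dX \;\geq\; \Big(\tfrac{2\pi}{T}\Big)^2\int_0^T \psi^2\,dX
\label{poincare-ansatz}
\end{equation}
for any zero-mean $T$-periodic $\psi$ (this is the sharp Poincaré/Wirtinger inequality on the circle), and to compare its left side, coming from the $\int(\phi')^2$ term, against the destabilizing reaction term whose coefficient is at most $(2\pi/T_0)^2<(2\pi/T)^2$. Multiplying the Poisson equation by $\psi$, integrating over a period, and combining the two inequalities would yield $\psi\equiv 0$ for $T<T_0$ once the nonlinear remainder is absorbed.

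The main obstacle, and where most of the work will concentrate, is controlling the \emph{nonlinear} remainder uniformly in the BGK profile. The map $\phi\mapsto \int f\,dv$ is nonlinear and, worse, the BGK distribution $f$ depends on $\phi$ through the trapped/untrapped decomposition in a non-smooth way (the separatrix $e=0$ introduces a singularity in the $v$-integration near turning points $v=c$). The hard part will be showing that the nonlinear terms are genuinely higher order, i.e. that the difference between the true reaction term and its linearization is $o(\|\psi\|)$ in the relevant norm, uniformly as $\varepsilon\to0$. I expect this to require the hypothesis $\|f-f_0\|_{W^{s,p}}<\varepsilon_0$ together with the energy bound $\int\int v^2 f<\infty$ to control the tails of the $v$-integration, and to exploit $s>1+\tfrac1p$ so that the pointwise bound on $f_0'$ makes the Nemytskii-type estimates on $h(\phi)-h(\bar\phi)$ Lipschitz with a constant approaching $(2\pi/T_0)^2$. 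Finally I would choose $\varepsilon_0(T)$ small enough that the combined coefficient stays strictly below $(2\pi/T)^2$, close the estimate to force $\psi\equiv0$, hence $E\equiv0$, contradicting the assumption that $E$ is not identically zero and completing the proof.
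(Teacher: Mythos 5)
There is a genuine gap at the heart of your plan: you assert that the coefficient of the linearized reaction term is controlled by $\max_{v_i\in S}\int\frac{f_0'(v)}{v-v_i}\,dv=(2\pi/T_0)^2$, but you never explain why the wave speed $c$ should have anything to do with the \emph{critical points} of $f_0$. For a travelling wave of speed $c$, the natural linearized coefficient is $P\int\frac{f_0'(v)}{v-c}\,dv$ with $c$ arbitrary, and this can exceed $(2\pi/T_0)^2$: for the Maxwellian one has $T_0=+\infty$, so your claimed bound is $0$, yet $P\int\frac{f_0'(v)}{v-c}\,dv>0$ for $|c|$ large (this is exactly the quantity $\nu(v_p)$ in the Dorning--Holloway discussion in the paper). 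So the Poincar\'e comparison $(2\pi/T_0)^2<(2\pi/T)^2$ cannot be closed as stated. The missing idea, which is the crux of the paper's argument, is that the Vlasov equation $(v-c)\partial_xf=E\partial_vf$ forces $\partial_vf(x,c)=0$ for a.e.\ $x$ in the set $\{E\neq0\}$; since $s-1>\frac1p$ the trace $\partial_vf(\cdot,c)$ is close in $L^p$ to $f_0'(c)$, so either $\{E\neq 0\}$ has small measure (handled separately) or $f_0'(c)$ is forced to be small, i.e.\ $c$ must lie near a critical point of $f_0$ or escape to infinity. Only after this localization does the threshold $(2\pi/T_0)^2$ — a maximum over critical points — become relevant, and only then does the Hardy-type inequality (which requires the numerator to vanish at the singularity, i.e.\ $\partial_vf|_{v=c}=0$) give an absolutely convergent, $W^{s,p}$-controlled coefficient $\int\frac{\partial_vf}{v-c}\,dv$ rather than a mere principal value.

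Two further structural remarks. First, the paper deliberately does \emph{not} assume the solution has the BGK energy-representation $f=\mu^{\pm}(e)$ or that $E$ vanishes only at finitely many points; it works directly with the identity $\int_0^T|E'|^2dx=\int_{\{E\neq0\}}\bigl(\int\frac{\partial_vf}{v-c}dv\bigr)E^2dx$ obtained from the Poisson and Vlasov equations, splitting into cases according to whether $|\{E\neq0\}|\to0$ or not. Your route through the Nemytskii operator $h(\phi)$ commits you to the energy ansatz and to delicate separatrix analysis that the paper's argument entirely avoids. Second, your energy identity (multiplying Poisson by $\psi$ and using Wirtinger) is essentially equivalent to the paper's identity in terms of $E$ and $E'$, so that part of the plan is sound; it is the localization of $c$ and the case $c\to\pm\infty$ (where one must show the coefficient tends to $0$) that must be added before the argument can be completed.
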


By Penrose's stability criterion (\cite{penrose} or Lemma \ref{lemma-penrose})
the homogeneous equilibrium $\left(  f_{0}\left(  v\right)  ,0\right)  $ is
linearly stable to perturbations of $x-$period $T<T_{0}$.\ Moreover, in
Proposition \ref{prop-linear}, the linear damping of electrical field is shown
for such stable states in a rough function space. Theorems \ref{thm-existence}
and \ref{thm-non-existence} imply that for any $p>1,\ s=1+\frac{1}{p}$ is the
critical index for existence or non-existence of small BGK waves in
$W^{s,p}\ $neighborhood of a stable homogeneous state. In Lemma
\ref{lemma-existence-critical}, we show that the stability condition
$0<T<T_{0}$ is in some sense also necessary for the above non-existence result
in $W^{s,p}\ \left(  s>1+\frac{1}{p}\right)  $.

The following corollary shows that all homogeneous equilibria in a
sufficiently small $W^{s,p}\left(  \mathbf{R}\right)  \left(  s>1+\frac{1}%
{p}\right)  $ neighborhood of a stable homogeneous state remain linearly
stable. With Corollary \ref{cor-unsatble}, it implies that $s=1+\frac{1}{p}$
is also the critical index for persistence of linear stability of homogeneous
states under perturbations in $W^{s,p}\left(  \mathbf{R}\right)  $ space.

\begin{corollary}
\label{cor-stable}Assume $f_{0}\left(  v\right)  \in W^{s,p}\left(
\mathbf{R}\right)  $ $\left(  p>1,s>1+\frac{1}{p}\right)  .\ $Let $S=\left\{
v_{i}\right\}  _{i=1}^{l}$ be the set of all extrema points of $f_{0}$ and
$T_{0}$ be defined in (\ref{defn-T-0}). Then for any $T<T_{0}$, $\exists$
$\varepsilon_{0}\left(  T\right)  >0$ such that any homogeneous state $\left(
f\left(  v\right)  ,0\right)  $ satisfying$\ $%
\[
\left\Vert f\left(  v\right)  -f_{0}\left(  v\right)  \right\Vert
_{W^{s,p}\left(  \mathbf{R}\right)  }<\varepsilon_{0}%
\]
is linearly stable under perturbations of $x-$period $T$.
\end{corollary}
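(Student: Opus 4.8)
The plan is to reduce Corollary \ref{cor-stable} to an application of Penrose's linear stability criterion, and the main work is to show that the Penrose condition is stable under small $W^{s,p}$ perturbations when $s>1+\frac1p$. The key point, already exploited in Theorem \ref{thm-non-existence}, is that for $s>1+\frac1p$ the space $W^{s,p}(\mathbf{R})$ embeds into $C^1(\mathbf{R})$ (indeed into $C^{1,\alpha}$ for some $\alpha>0$) by the fractional Sobolev embedding; hence $f_0$ and all nearby $f$ are genuinely $C^1$ functions, their derivatives are uniformly close in the sup norm, and the notion of extrema point and the integral in \eqref{defn-T-0} make classical sense.

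First I would recall the Penrose quantity: linear stability of a homogeneous state $(f(v),0)$ to perturbations of period $T$ is governed by the sign of $\int \frac{f'(v)}{v-v_i}\,dv$ at the extrema $v_i$ of $f$, with instability possible only when $\left(\frac{2\pi}{T}\right)^2 < \max_{v_i}\int\frac{f'(v)}{v-v_i}\,dv$. Equivalently, $(f,0)$ is linearly stable to period-$T$ perturbations precisely when $T$ is below its own threshold $T_0(f)$ defined as in \eqref{defn-T-0}. So the statement to prove is that $T<T_0(f_0)$ forces $T<T_0(f)$ for all $f$ in a small enough $W^{s,p}$ ball. Since $T<T_0(f_0)$ means $\left(\frac{2\pi}{T}\right)^2 > \max_{v_i\in S}\int\frac{f_0'(v)}{v-v_i}\,dv$ (strict inequality, with a definite gap $\delta>0$), it suffices to show the Penrose functional depends continuously on $f$ in $W^{s,p}$ near $f_0$.

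The crux is the continuity and upper-semicontinuity estimate for $\sup_{v^\ast}\int\frac{f'(v)}{v-v^\ast}\,dv$, where the supremum is taken over critical points $v^\ast$ of the perturbed $f$. Two effects must be controlled: the integrand changes because $f'\to f_0'$ in the relevant norm, and the location and number of critical points $v^\ast$ may change. For the integral itself, I would write $\int\frac{f'(v)}{v-v^\ast}\,dv$ as a principal-value / Hilbert-transform–type expression and use that $W^{s-1,p}$ control of $f'-f_0'$ (with $s-1>\frac1p$, so $f'-f_0'$ is Hölder and small in $C^0$) makes the integral vary by $O(\|f-f_0\|_{W^{s,p}})$, uniformly in $v^\ast$ ranging over a fixed compact set containing all relevant critical values (the tails being harmless because of the $v^2$-weighted control and decay built into the hypotheses). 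For the critical points, the $C^1$-closeness forces every critical point $v^\ast$ of $f$ to lie near the critical set $S$ of $f_0$; evaluating the functional at any such $v^\ast$ gives a value within $\delta/2$ of the value at the corresponding $v_i\in S$, by the uniform continuity of $v^\ast\mapsto\int\frac{f'}{v-v^\ast}\,dv$. Combining, $\max_{v^\ast}\int\frac{f'(v)}{v-v^\ast}\,dv \le \max_{v_i\in S}\int\frac{f_0'(v)}{v-v_i}\,dv + \delta/2 < \left(\frac{2\pi}{T}\right)^2$, which is exactly the Penrose stability condition for $f$ at period $T$.

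I expect the main obstacle to be the uniform control of the singular integral $\int\frac{f'(v)}{v-v^\ast}\,dv$ as $v^\ast$ varies, especially reconciling possible proliferation of new critical points: $C^1$-smallness prevents critical points from appearing far from $S$, but one must rule out that a flat near-critical region of $f_0$ breeds extra critical points of $f$ carrying an anomalously large Penrose value. This is handled by the same quantitative embedding $W^{s,p}\hookrightarrow C^{1,\alpha}$ that underlies Theorem \ref{thm-non-existence}: the Hölder modulus of $f'$ is uniformly controlled on the ball, so the Penrose functional is itself uniformly continuous in $v^\ast$ with a modulus independent of $f$, and the localization of all critical points near the finite set $S$ collapses the $\sup$ to a neighborhood of the original finite maximum. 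Once this uniform-continuity-plus-localization lemma is in place, the corollary follows by choosing $\varepsilon_0$ small enough that the total perturbation of the Penrose maximum is below the gap $\delta$.
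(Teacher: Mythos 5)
Your overall strategy coincides with the paper's: reduce to Penrose's criterion (Lemma \ref{lemma-penrose}) and show that the Penrose quantity $\max_{v^{\ast}}\int\frac{f'(v)}{v-v^{\ast}}\,dv$, taken over critical points $v^{\ast}$ of the perturbed state, cannot exceed $\left(\frac{2\pi}{T}\right)^{2}$ when $\left\Vert f-f_{0}\right\Vert _{W^{s,p}}$ is small. The paper runs this as a compactness/contradiction argument on a sequence $f_{n}$ of unstable states with critical points $v_{n}$, whereas you propose a direct quantitative continuity estimate; that difference is cosmetic.

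There is, however, a genuine gap in your treatment of where the critical points of $f$ can sit. You assert that $C^{1}$-closeness ``forces every critical point $v^{\ast}$ of $f$ to lie near the critical set $S$ of $f_{0}$.'' This is false: since $f_{0}\in W^{s,p}$ with $s-1>\frac{1}{p}$, the derivative $f_{0}'$ is continuous and tends to $0$ at infinity, so the set $\left\{ \left\vert f_{0}'\right\vert <C\varepsilon_{0}\right\}$, which is all that $C^{1}$-closeness confines $v^{\ast}$ to, contains an entire neighborhood of infinity. A small $W^{s,p}$ perturbation can therefore create critical points at arbitrarily large $\left\vert v^{\ast}\right\vert$, far from $S$, and your ``localization near $S$'' step does not apply to them. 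Your parenthetical appeal to ``the $v^{2}$-weighted control and decay built into the hypotheses'' does not rescue this: the corollary assumes only $\left\Vert f-f_{0}\right\Vert _{W^{s,p}}<\varepsilon_{0}$, with no weighted or decay hypothesis. The correct way to dispose of these far-out critical points --- and what the paper does, both in Case 2.2 of the proof of Theorem \ref{thm-non-existence} and in the ``$\{v_{n}\}$ diverges'' case of its proof of this corollary --- is to show that the Penrose integral itself is small there: $\int\frac{f'(v)}{v-v^{\ast}}\,dv\rightarrow0$ as $\left\vert v^{\ast}\right\vert \rightarrow\infty$, uniformly over the $W^{s,p}$-ball, which is proved by splitting the integral with a cut-off $\chi$ supported near $v^{\ast}$ and estimating the near part by the Hardy-type Lemma \ref{lemma-hardy} (using $\left\Vert \chi\,\partial_{v}f_{0}\right\Vert _{W^{s-1,p}}\rightarrow0$) and the far part by H\"older with a factor $\left\vert v^{\ast}\right\vert ^{-1+\frac{1}{p'}}$. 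Once you add this uniform vanishing-at-infinity estimate, your argument closes: bounded critical points contribute at most $\max_{v_{i}\in S}\int\frac{f_{0}'}{v-v_{i}}\,dv+\delta/2$ by your continuity argument, and unbounded ones contribute $o(1)$, both below $\left(\frac{2\pi}{T}\right)^{2}$.
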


Theorem \ref{thm-non-existence} and the above Corollary suggest that the
dynamical structures in small $W^{s,p}$ $\left(  s>1+\frac{1}{p}\right)
\ $neighborhood of a stable homogeneous equilibrium might be relatively
simple, since the only nearby steady structures, including travelling waves,
are stable homogeneous states. The physical implication of Theorem
\ref{thm-non-existence} is that when the initial perturbation is small in
$W^{s,p}$ $\left(  s>1+\frac{1}{p}\right)  $, the potential well of the wave
is unable to trap particles forever to form BGK waves. So the particles will
get out of the potential well sooner or later and perform free flights, then
the linear damping effect might manifest itself at the nonlinear level.

Furthermore, when $p=2,$ we get a much stronger result that any invariant
structure near a stable homogeneous state in $H^{s}$ space $\left(  s>\frac
{3}{2}\right)  \ $must be trivial, that is, the electric field is identically zero.

\begin{theorem}
\label{thm-invariant}Assume the homogeneous profile $f_{0}\left(  v\right)
\in H^{s}\left(  \mathbf{R}\right)  $ $\left(  s>\frac{3}{2}\right)  .\ $For
any $T<T_{0}$ (defined by (\ref{defn-T-0})), there exists $\varepsilon_{0}>0$,
such that if $\left(  f\left(  t\right)  ,E\left(  t\right)  \right)  $ is a
solution to the nonlinear VP equation (\ref{vlasov})-(\ref{poisson}) and
\begin{equation}
\left\Vert f\left(  t\right)  -f_{0}\right\Vert _{L_{x}^{2}H_{v}^{s}%
}<\varepsilon_{0},\ \text{for\ all\ }t\in\mathbf{R},
\label{assumption-thm-invariant}%
\end{equation}
then $E\left(  t\right)  \equiv0$ for all $t\in\mathbf{R}.$
\end{theorem}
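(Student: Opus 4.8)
The plan is to leverage Theorem \ref{thm-non-existence} by reducing the dynamical (time-dependent) statement to the static non-existence of travelling waves. The key observation is that Theorem \ref{thm-non-existence} already rules out nontrivial travelling BGK waves in a small $W^{s,p}$ neighborhood; a travelling wave is precisely a solution on which the dynamics acts as pure translation in $x$. So the strategy is: suppose for contradiction that $E(t)\not\equiv 0$ for some solution trapped forever in the $\varepsilon_0$-tube \eqref{assumption-thm-invariant}. I want to extract from the orbit $\{f(t)\}_{t\in\mathbf{R}}$ a genuine steady or travelling-wave structure to which the non-existence theorem applies, thereby forcing $E\equiv 0$.

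First I would set up the functional-analytic framework in the case $p=2$, $s>3/2$. The hypothesis is uniform-in-time closeness in $L^2_x H^s_v$, which by the embedding $H^s_v \hookrightarrow$ (since $s>3/2$) controls $\partial_v f$ and gives pointwise-in-$v$ decay, mirroring the norm used in Theorem \ref{thm-non-existence} (there $s>1+\frac1p=\frac32$). The orbit stays in a bounded, closed set $K=\{g:\|g-f_0\|_{L^2_xH^s_v}\le\varepsilon_0\}$ for all time. The natural device is to study the time-averaged or weak-limit behavior: by the uniform bound, the family $\{f(t)\}$ is weakly precompact, and I would consider weak-$*$ limits along time sequences $t_n\to\pm\infty$, or consider the closure of the full trajectory in a weak topology to produce an invariant set inside $K$. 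On such a compact invariant set the electric energy $\int_0^T E(x,t)^2\,dx$ is a continuous, time-invariant-in-integral quantity; I would examine whether this energy can be bounded below uniformly. The main analytic input is a \emph{quantitative} version of the non-existence mechanism behind Theorem \ref{thm-non-existence}: the Penrose/$T<T_0$ condition yields a spectral gap so that the linearized operator has no neutral or growing modes, and one expects a coercivity estimate of the form: any finite-energy solution staying within distance $\varepsilon_0$ of $(f_0,0)$ must have its electric field decaying, so that a time-invariant orbit cannot sustain nonzero field. Concretely, I would try to show $\frac{d}{dt}\int E^2\,dx$ together with the invariance (boundedness for all $t\in\mathbf{R}$, both forward and backward) forces the field to be steady, reducing the problem to a BGK-type travelling wave and then invoking Theorem \ref{thm-non-existence} directly.

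The cleanest route, and the one I would pursue first, is to avoid weak limits and argue by a direct averaging/monotonicity contradiction. Because the solution exists for all $t\in\mathbf{R}$ and stays in the tube, I would look for a Lyapunov-type or almost-monotone functional $\Phi(t)$—built from the linearized energy associated to the stable equilibrium, whose coercivity is guaranteed precisely by $T<T_0$ (Penrose stability)—and show that along the trapped orbit $\Phi$ must be constant, which in turn forces the perturbation to lie in the kernel of the relevant quadratic form; since stability gives a trivial kernel (no nonzero neutral modes for $T<T_0$), this yields $E\equiv 0$. The gap between the linear quadratic form and the genuine nonlinear dynamics is controlled by the smallness $\varepsilon_0$ together with the strong $H^s_v$ control, so that the nonlinear corrections are dominated by the coercive linear part.

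I expect the \textbf{main obstacle} to be upgrading the \emph{static} non-existence (Theorem \ref{thm-non-existence}, about travelling waves) into a genuinely \emph{dynamic} rigidity statement valid for an arbitrary all-time-bounded orbit, rather than one that is translation-invariant in $x-ct$. A travelling wave is a very special trajectory; a general invariant orbit need not be one, so I cannot apply Theorem \ref{thm-non-existence} verbatim. The crux is therefore to produce, from the mere boundedness of $\{f(t)\}_{t\in\mathbf{R}}$ in $L^2_x H^s_v$, enough rigidity—either via a coercive conserved/monotone functional from Penrose stability or via compactness of the orbit—to conclude that $E$ must be \emph{stationary} (hence a travelling wave after a Galilean shift) or directly that its energy vanishes. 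Establishing the requisite coercivity uniformly over the tube, while handling the quadratic nonlinearity of the Vlasov-Poisson system and the subtlety that $H^s_v$ control in $v$ must compensate for the weak ($L^2$) control in $x$, is where the real work lies; the $s>3/2$ threshold should be exactly what makes this coercivity, and hence the rigidity, go through.
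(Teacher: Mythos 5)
There is a genuine gap here: you correctly identify the central difficulty (upgrading the static non-existence of travelling waves into a rigidity statement for an arbitrary all-time-bounded orbit), but neither of your two proposed routes resolves it, and neither is the mechanism the paper actually uses. The weak-limit route fails because elements of the weak closure of a bounded orbit are merely points of an invariant set, not steady states or travelling waves, so Theorem \ref{thm-non-existence} cannot be invoked on them. The Lyapunov/coercivity route also does not go through as described: Penrose's condition $T<T_0$ is a spectral (dispersion-relation) condition, not a variational one, and for a general stable non-monotone $f_0$ there is no coercive conserved energy--Casimir functional whose kernel argument you could run; moreover $\frac{d}{dt}\int E^2\,dx=\int jE\,dx$ has no sign. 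Your proposal stops exactly at the point where the real proof begins.

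The paper's argument is entirely different and does not pass through Theorem \ref{thm-non-existence} at all. It rests on the \emph{linear} damping estimate in integral form (Proposition \ref{prop-linear-integral-estimate}): for $T<T_0$ the free linearized evolution satisfies $\left\Vert (1+t)^{s_v}\mathcal{E}(e^{tL_0}h)\right\Vert_{L^2_t H^{3/2}_x}\le C\Vert h\Vert_{L^2_xH^{s_v}_v}$. Writing $f_1=f-f_0$ and using Duhamel, $f_1(t)=e^{tL_0}f_1(0)+\int_0^t e^{(t-u)L_0}(E\partial_v f_1)(u)\,du$, one bootstraps: the nonlinear source obeys $\Vert E\partial_v f_1\Vert_{L^2_xH^{s-1}_v}\le C\Vert E\Vert_{H^{3/2}_x}\Vert f_1\Vert_{L^2_xH^s_v}$, and since $2(s-1)>1$ the weighted-in-time convolution closes, yielding $\left\Vert (1+t)^{s-1}E\right\Vert_{L^2_{\{t\ge0\}}H^{3/2}_x}\le C\varepsilon_0$ for any solution staying in the tube for $t\ge 0$ (Lemma \ref{lemma-estimate-integral-small}). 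The two-sided invariance is then exploited by a time-translation trick: restarting the solution at time $-t_0$ gives $\int_0^1\Vert E\Vert^2_{H^{3/2}_x}dt\le (C\varepsilon_0)^2(1+t_0)^{-2(s-1)}$ for every $t_0>0$, forcing $E\equiv0$. This is precisely where the threshold $s>\frac32$ enters (both in the product estimate for $E\partial_vf_1$ and in the integrability of $(1+t)^{-2(s-1)}$), a role your sketch does not capture. To repair your proposal you would need to supply this quantitative decay-plus-bootstrap machinery; the reduction to travelling waves cannot substitute for it.
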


The space $L_{x}^{2}H_{v}^{s}$ is contained in the Sobolev space $H_{x,v}^{s}%
$. The above theorem excludes any nontrivial invariant structure, such as
almost periodic solutions and heteroclinic (homoclinic) orbits, in the $H^{s}$
$\left(  s>\frac{3}{2}\right)  \ $neighborhood of a stable homogeneous state.
In Theorem \ref{thm-semi-invariant}, we also show that nonlinear decay of
electric field is true for any positive or negative invariant structure (see
Section 5 for definition) in the $H^{s}$ $\left(  s>\frac{3}{2}\right)
\ $neighborhood of stable homogeneous states. These results reveal that in
contrary to the $H^{s}$ $\left(  s<\frac{3}{2}\right)  \,$\ case, there are no
obstacles in the $H^{s}$ $\left(  s>\frac{3}{2}\right)  \ $neighborhood to
prevent nonlinear Landau damping$.$

We note that Theorems \ref{thm-existence}, \ref{thm-non-existence} and
\ref{thm-invariant} about the contrasting nonlinear dynamics in $W^{s,p}%
\ $spaces with $s<1+\frac{1}{p}$ or $s>1+\frac{1}{p}\ $(particularly when
$p=2$),\ have no any analogue at the linear level. Indeed, under Penrose's
stability condition, it is shown in Section 4 that the linear decay of
electrical fields holds true for very rough initial data, particularly, no
derivatives of $f\left(  t=0\right)  $ is required for linear damping. We
refer to Propositions \ref{prop-linear-integral-estimate} and
\ref{prop-linear}, as well as Remark \ref{rmk-linear} in Section 4 for more
details. This shows once again the importance of particle trapping effects on
nonlinear dynamics, which are completely ignored at the linear level.

Finally, we briefly describe main ideas in the proof of Theorems
\ref{thm-existence}, \ref{thm-non-existence} and \ref{thm-invariant}. For
simplicity, we look at steady BGK waves. The first attempt would be to
construct BGK waves near $\left(  f_{0}\left(  v\right)  ,0\right)  $ directly
by the bifurcation theory. However, this requires a bifurcation condition: for
bifurcation period $T>0,$%
\begin{equation}
\left(  \frac{2\pi}{T}\right)  ^{2}=\int_{\mathbf{R}}\frac{f_{0}^{\prime
}\left(  v\right)  }{v}dv. \label{condition-bifurcation}%
\end{equation}
For general homogeneous equilibria and period $T$, the bifurcation condition
(\ref{condition-bifurcation}) is not satisfied. For example, for Maxwellian,
this condition fails for any $T>0$. Our strategy is to modify $f_{0}\left(
v\right)  \ $to get a nearby homogeneous state satisfying
(\ref{condition-bifurcation}) and then do bifurcation near this modified
state. In the modification step, we introduce two parameters, one is to to
obtain (\ref{condition-bifurcation}) and the other one is to ensure that the
modification results in a small $W^{s,p}$ $\left(  s<1+\frac{1}{p}\right)
\ $norm change. For the proof of non-existence of travelling waves in
$W^{s,p}$ $\left(  s>1+\frac{1}{p}\right)  $, our idea is to get an second
order equation for the electrical field $E\left(  x\right)  $ from steady
Vlasov-Poisson equations and show the integral form of this equation is not
compatible when $T<T_{0}\ $and the perturbation is small in $W^{s,p}$ $\left(
s>1+\frac{1}{p}\right)  $. Interestingly, $T_{0}$ (defined by (\ref{defn-T-0}%
)) is exactly the critical period for linear stability by Penrose's criterion,
which is also used in the proof of Corollaries \ref{cor-unsatble} and
\ref{cor-stable}. To prove Theorem \ref{thm-invariant}, we use the integral
form of the linear decay estimate (Proposition
\ref{prop-linear-integral-estimate}) and the $H^{s}\ \left(  s>\frac{3}%
{2}\right)  $ invariant assumption to obtain similar nonlinear decay estimates
in the integral form. From such integral estimates, we can show the
homogeneous nature of the invariant structures and the decay of electric field
for semi-invariant solutions.

Here we are in a position to offer a conceptual explanation why $s=1+\frac
{1}{p}$ appears as the critical Sobolev exponent for the existence of small
BKG waves and possibly also in the nonlinear Landau damping. By Penrose'
stability criterion, the critical spatial period $T_{0}$ for linear stability
of $\left(  f_{0}\left(  v\right)  ,0\right)  \ $is determined in
\eqref{defn-T-0} by integrals $\int\frac{f_{0}^{\prime}}{v-c_{i}}dv$, where
$c_{i}$ are critical points of $f_{0}$. These integrals are controlled by
$||f_{0}||_{W^{s,p}}$ if $s>1+\frac{1}{p}$, but not if $s<1+\frac{1}{p}$. In
the latter case, a small homogeneous perturbation to $f_{0}$ in $W^{s,p}$
space may dramatically change its stability for any fixed spatial period $T$.
Due to this change of stability, bifurcations occur and produce small BKG
waves and possibly other complicated structures. In the opposite case when
$s>1+\frac{1}{p}$, small homogeneous perturbations do not change the stability
of $\left(  f_{0}\left(  v\right)  ,0\right)  $, therefore the bifurcation of
nontrivial structures cannot occur and the nonlinear Landau damping might be
expected.

The result of this paper has also been extended to a related problem of
inviscid decay of Couette flow $\vec{v}_{0}=\left(  y,0\right)  \ $of 2D Euler
equations. The linear decay of vertical velocity near Couette flow was already
known by Orr (\cite{orr-1907}) in 1907. This inviscid decay problem is
important to understand the formation of coherent structures in 2D turbulence.
In \cite{lin-zeng-euler-decay}, we are able to obtain similar results near the
Couette flow.

This paper is organized as follows. In Section 2, we prove the existence
result in $W^{s,p}\ \left(  s<1+\frac{1}{p}\right)  $. In Section 3,
non-existence of BGK waves in $W^{s,p}\left(  s>1+\frac{1}{p}\right)  $ is
shown. In Section 4, we study the linear damping problem in Sobolev spaces. In
Section 5, we use the linear decay estimate in Section 4 to show that all
invariant structures in $H^{s}\ \left(  s>\frac{3}{2}\right)  $ are trivial.
The appendix is to reformulate Penrose's linear stability criterion used in
this paper. Throughout this paper, we use $C$ to denote a generic constant in
the estimates and the dependence of $C$ is indicated only when it matters in
the proof.

\section{Existence of BGK waves in $W^{s,p}\ \left(  s<1+\frac{1}{p}\right)
$}

In this Section, we construct small BGK waves near any homogeneous state in
the space $W^{s,p}$ $\left(  s<1+\frac{1}{p}\right)  $. Our strategy is to
first construct BGK waves near proper smooth homogeneous states. Then we show
that any homogeneous state can be approximated by such smooth states in
$W^{s,p}$.

\begin{lemma}
\label{lemma-even}Assume $u\left(  x\right)  \in C^{\infty}\left(
\mathbf{R}\right)  $, supp $u\subset\left[  -b,b\right]  $, and $u\left(
x\right)  $ is even, then there exists $g\in C^{\infty}\left(  \mathbf{R}%
\right)  $, supp $g\subset\left[  -\sqrt{b},\sqrt{b}\right]  ,$ such that
$u\left(  x\right)  =g\left(  x^{2}\right)  $.
\end{lemma}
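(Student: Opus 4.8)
The plan is to reduce the statement to the classical fact that an even smooth function is a smooth function of the square of its argument, while tracking the support. I would define $g$ on $[0,\infty)$ by $g(t)=u(\sqrt t)$. Then $u(x)=g(x^{2})$ holds for $x\ge 0$ by construction, and for $x<0$ by the evenness of $u$ together with the evenness of $x\mapsto x^{2}$. For $t>0$ the map $t\mapsto\sqrt t$ is $C^{\infty}$, so $g$ is automatically smooth on $(0,\infty)$; the entire difficulty is to show that $g$ extends to a $C^{\infty}$ function across $t=0$.

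For the smoothness at the origin I would use Taylor's theorem with integral remainder together with the evenness of $u$. Since $u$ is even, all odd derivatives vanish at $0$, i.e. $u^{(2j+1)}(0)=0$, so expanding to order $2m+1$ gives
\[
u(x)=\sum_{j=0}^{m}\frac{u^{(2j)}(0)}{(2j)!}\,x^{2j}+x^{2m+2}v_m(x),\qquad v_m(x)=\frac{1}{(2m+1)!}\int_0^1(1-\tau)^{2m+1}u^{(2m+2)}(\tau x)\,d\tau,
\]
with $v_m\in C^{\infty}(\mathbf{R})$, the point being that only even powers survive and the remainder carries the even factor $x^{2m+2}=(x^{2})^{m+1}$. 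Substituting $x=\sqrt t$ yields $g(t)=\sum_{j=0}^{m}\frac{u^{(2j)}(0)}{(2j)!}\,t^{j}+t^{m+1}v_m(\sqrt t)$ for $t\ge 0$. The polynomial part is smooth in $t$, so it suffices to show the remainder $t^{m+1}v_m(\sqrt t)$ is $C^{m}$ at $t=0$.

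This last point is the crux and the step I expect to be the main obstacle. Each $t$-differentiation of $v_m(\sqrt t)$ produces a factor $\tfrac12 t^{-1/2}$, so by the chain rule $\frac{d^{k}}{dt^{k}}v_m(\sqrt t)$ is a finite combination of terms $v_m^{(i)}(\sqrt t)\,t^{i/2-k}$ with $1\le i\le k$, each of size $O(t^{-k+1/2})$ near $0$ since the $v_m^{(i)}$ are bounded there. Applying the Leibniz rule to $t^{m+1}v_m(\sqrt t)$ and using $(t^{m+1})^{(r)}=O(t^{m+1-r})$ shows that all of its derivatives up to order $m$ are $O(t^{m+3/2-k})$, which vanish as $t\to0^{+}$ for every $k\le m$. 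Hence $g\in C^{m}$ near $0$ for each $m$, so $g\in C^{\infty}([0,\infty))$ with $g^{(j)}(0)=j!\,u^{(2j)}(0)/(2j)!$. The substance of the lemma is precisely this absorption of the negative powers of $t$ by the prefactor $t^{m+1}$; everything else is bookkeeping.

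Finally I would handle the extension and the support. Because $u$ is supported in $[-b,b]$, for $t>b^{2}$ we have $\sqrt t>b$ and hence $g(t)=u(\sqrt t)=0$, so $g$ vanishes on $(b^{2},\infty)$. It then remains to extend $g$ from $[0,\infty)$ to all of $\mathbf{R}$ as a $C^{\infty}$ function, which is routine: by Borel's theorem (or a Seeley/Whitney extension) one realizes the one-sided jet $(g^{(j)}(0))_{j\ge0}$ from the left and multiplies by a cutoff equal to $1$ on a neighborhood of $[0,b^{2}]$ and supported in a slightly larger set. This modification changes $g$ only for $t<0$ and hence does not affect the identity $u(x)=g(x^{2})$, which uses $g$ only on $[0,\infty)$. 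The resulting $g\in C^{\infty}(\mathbf{R})$ has compact support of the required form and satisfies $u(x)=g(x^{2})$, completing the argument.
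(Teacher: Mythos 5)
Your argument is correct and rests on the same mechanism as the paper's proof (which follows H\"{o}rmander, Theorem 1.2.6): the odd derivatives of $u$ vanish at the origin, and once the Taylor part in powers of $x^{2}$ is peeled off, the remainder vanishes to high enough order to absorb the negative powers of $t$ created by differentiating $t\mapsto\sqrt{t}$. The packaging is genuinely different, though. The paper peels once and for all: Borel's theorem supplies a single $g_{0}\in C_{0}^{\infty}(-\sqrt{b},\sqrt{b})$ whose Taylor series at $0$ is $\sum u^{(2k)}(0)x^{k}/(2k)!$, so that $u_{1}=u-g_{0}(x^{2})$ is flat at $0$; then $u_{1}(\sqrt{x})$ is automatically smooth and flat at $0^{+}$, and setting $g=g_{0}+u_{1}(\sqrt{\cdot})$ for $x>0$ and $g=g_{0}$ for $x\le 0$ settles the smoothness at $0$, the extension to negative arguments, and the support in one stroke. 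You instead work order by order with finite Taylor expansions and integral remainders, proving $C^{m}$ regularity at $t=0$ for every $m$ by explicit derivative counting, and then invoke Seeley/Borel separately to continue past $t=0$; this makes the crux more elementary and self-contained, at the cost of an extra extension-and-cutoff step at the end. Two minor remarks: in your Leibniz estimate the $r=k$ term is only $O(t^{m+1-k})$, not $O(t^{m+3/2-k})$ (the exponent is still positive for $k\le m$, so the conclusion stands); and both your construction and the paper's actually produce $g$ supported in $[-\epsilon,b^{2}]$, which lies inside the stated $[-\sqrt{b},\sqrt{b}]$ only when $b\le 1$ --- a slip in the statement of the lemma rather than in either proof.
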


\begin{proof}
The proof is essentially given in \cite[P. 394]{hormander-1}. We repeat it
here for completeness. When $k$ is odd, since $u^{\left(  k\right)  }\left(
x\right)  $ is odd we have $u^{\left(  k\right)  }\left(  0\right)  =0$. By
Theorem 1.2.6 in \cite{hormander-1}, we can choose $g_{0}\in C_{0}^{\infty
}\left(  -\sqrt{b},\sqrt{b}\right)  $ with the Taylor expansion $\sum
u^{\left(  2k\right)  }\left(  0\right)  x^{k}/\left(  2k\right)  !.$ Then all
derivatives of $u_{1}\left(  x\right)  =u\left(  x\right)  -g_{0}\left(
x^{2}\right)  \ $vanish at $0$. Define
\[
g\left(  x\right)  =\left\{
\begin{array}
[c]{cc}%
g_{0}\left(  x\right)  +u_{1}\left(  \sqrt{x}\right)  & \text{if }x>0\\
g_{0}\left(  x\right)  & \text{if }x\leq0
\end{array}
\right.  .
\]
Then $g\left(  x\right)  $ satisfies all the required properties. In
particular, $g\left(  x\right)  $ is $C^{\infty}$ at $x=0$ because all
derivatives of $u_{1}\left(  x\right)  $ vanish there.
\end{proof}

\begin{proposition}
\label{propo-existence}Assume
\[
f_{0}\left(  v\right)  \in C^{\infty}\left(  \mathbf{R}\right)  \cap
W^{2,p}\left(  \mathbf{R}\right)  \ \left(  p>1\right)  ,\
\]
$f_{0}$ is even near $v=0$, and
\[
f_{0}>0,\ \int_{\mathbf{R}}f_{0}\left(  v\right)  dv=1,\ \int_{\mathbf{R}%
}v^{2}f_{0}\left(  v\right)  dv<\infty.
\]
Then for any fixed $s<1+\frac{1}{p},\ T>0,\ $and any $\varepsilon>0$, there
exist steady BGK solutions of the form $\left(  f_{\varepsilon}\left(
x,v\right)  ,E_{\varepsilon}\left(  x\right)  \right)  $ to (\ref{vpe}), such
that$\ \left(  f_{\varepsilon}\left(  x,v\right)  ,E_{\varepsilon}\left(
x\right)  \right)  $ has period $T$ in $x$,$\ f_{\varepsilon}\left(
x,v\right)  >0,$ $E_{\varepsilon}\left(  x\right)  $ is not identically zero,
and
\begin{equation}
\ \left\Vert f_{\varepsilon}-f_{0}\right\Vert _{L_{x,v}^{1}}+\ \int_{0}%
^{T}\int_{\mathbf{R}}v^{2}\left\vert f_{0}-f_{\varepsilon}\right\vert
\ dxdv+\left\Vert f_{\varepsilon}-f_{0}\right\Vert _{W_{x,v}^{s,p}%
}<\varepsilon.\ \label{norm-proposition}%
\end{equation}

\end{proposition}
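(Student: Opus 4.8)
The plan is to realize the wave through the classical energy ansatz and to produce a nontrivial branch by bifurcation, after first perturbing $f_{0}$ into a nearby profile for which the bifurcation condition (\ref{condition-bifurcation}) holds. Seeking a steady solution of the form $f(x,v)=\mu\!\left(\tfrac12 v^{2}-\phi(x)\right)$ with $E=-\phi'$ solves the steady Vlasov equation automatically, since $\tfrac12 v^{2}-\phi$ is constant along the characteristics; the Poisson equation then collapses to the scalar autonomous ODE
\[
\phi''(x)=\rho_{\mu}(\phi(x))-1,\qquad \rho_{\mu}(\phi):=\int_{\mathbf{R}}\mu\!\left(\tfrac12 v^{2}-\phi\right)dv.
\]
Because $f_{0}$ is even near $v=0$, Lemma \ref{lemma-even} lets me write $f_{0}(v)=\mu_{0}(\tfrac12 v^{2})$ on the small trapped energy range, so that $\phi\equiv0$ reproduces $f_{0}$; for passing particles one extends $\mu$ by two branches matching $f_{0}(\pm\cdot)$, which is harmless for the small waves considered. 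Linearizing $\rho_{\mu}$ at $\phi=0$ and using $\mu_{0}'(\tfrac12 v^{2})=f_{0}'(v)/v$ gives $\rho_{\mu}'(0)=-\int f_{0}'/v\,dv$, so the linearized ODE admits a nontrivial $T$-periodic solution exactly when $\int f_{0}'(v)/v\,dv=(2\pi/T)^{2}$, which is precisely (\ref{condition-bifurcation}).

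The heart of the argument is the modification step, which I would carry out with a two-parameter even bump supported near $v=0$. Set $\tilde f_{0}=f_{0}+A\,\chi(v/\epsilon)$ with $\chi$ a fixed smooth even profile and $C:=\int\chi'(w)/w\,dw\neq0$. A change of variables shows the modification shifts the bifurcation integral by
\[
\int\frac{\big(A\chi(v/\epsilon)\big)'}{v}\,dv=\frac{A}{\epsilon}\int\frac{\chi'(w)}{w}\,dw=\frac{AC}{\epsilon},
\]
whereas $\|A\chi(\cdot/\epsilon)\|_{L^{1}}\sim A\epsilon$, the energy change is $\sim A\epsilon^{3}$, and $\|A\chi(\cdot/\epsilon)\|_{W^{s,p}}\sim A\,\epsilon^{1/p-s}$. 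Given the fixed target $\Delta=(2\pi/T)^{2}-\int f_{0}'/v\,dv$, I would set the amplitude by $A=\Delta\epsilon/C$, which makes the shift exactly $\Delta$ for every $\epsilon$, and then let $\epsilon\to0$. All physical norms vanish, while $\|\tilde f_{0}-f_{0}\|_{W^{s,p}}\sim(\Delta/C)\,\epsilon^{1+\frac1p-s}\to0$ \emph{precisely because} $s<1+\frac1p$; this is exactly where the critical exponent enters. Since $\|A\chi(\cdot/\epsilon)\|_{L^{\infty}}=|A|\,\|\chi\|_{\infty}\to0$ and $f_{0}$ is bounded below near $0$, positivity is preserved; using a mean-zero profile (or one further normalization) keeps $\int\tilde f_{0}=1$, and evenness near $0$, smoothness, and the second-moment bound are retained. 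Thus $\tilde f_{0}$ satisfies all hypotheses of the proposition \emph{and} (\ref{condition-bifurcation}).

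With $\tilde f_{0}$ frozen, I would produce the wave by bifurcation for $\phi''=\rho_{\tilde\mu}(\phi)-1$, whose linearization at $\phi=0$ is $\psi''=-(2\pi/T)^{2}\psi$. Working in even $T$-periodic functions, $(2\pi/T)^{2}$ is a simple eigenvalue of $-\partial_{xx}$, so the Crandall--Rabinowitz theorem (equivalently a phase-plane or Lyapunov--Schmidt analysis of the nonlinear oscillator) yields a branch of nontrivial even $T$-periodic solutions $\phi_{a}$ with $\|\phi_{a}\|\to0$ as the amplitude $a\to0$, giving $E_{\varepsilon}=-\phi_{a}'\not\equiv0$ and $f_{\varepsilon}(x,v)=\tilde\mu(\tfrac12 v^{2}-\phi_{a}(x))>0$. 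I then split $f_{\varepsilon}-f_{0}=(\tilde f_{0}-f_{0})+(f_{\varepsilon}-\tilde f_{0})$: the first term is controlled by the modification estimate, and since $f_{\varepsilon}-\tilde f_{0}=\tilde\mu(\tfrac12 v^{2}-\phi_{a})-\tilde\mu(\tfrac12 v^{2})$ depends smoothly on $\phi_{a}$, it is $O(\|\phi_{a}\|)$ in every relevant norm once $\tilde\mu$ is fixed. The parameters are therefore chosen in a nested fashion: first $\epsilon$ small to make $\|\tilde f_{0}-f_{0}\|$ tiny, then the amplitude $a$ small (depending on $\epsilon$) to make $\|f_{\varepsilon}-\tilde f_{0}\|$ tiny, so that the total in (\ref{norm-proposition}) is below $\varepsilon$.

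The main obstacle is twofold, and both parts concern the $W^{s,p}$ control. First, the sharp scaling $\|A\chi(\cdot/\epsilon)\|_{W^{s,p}}\sim A\epsilon^{1/p-s}$ must be established for fractional $s$ (not merely integer $s$), confirming that $s=1+\frac1p$ is genuinely the borderline. Second, and more delicate, bounding $f_{\varepsilon}-\tilde f_{0}$ in $W^{s,p}_{x,v}$ requires care because the modification deliberately injects large second derivatives of $\tilde f_{0}$ near $v=0$ (this is what moves the bifurcation integral), so $\tilde\mu'$ is large there; one must verify that for the frozen profile the amplitude $a$ can be sent to zero fast enough to kill this contribution. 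The remaining technical point is choosing the passing-particle branches so that $f_{\varepsilon}$ is smooth, not merely continuous, across the trapped/free boundary, which is where BGK constructions are traditionally most delicate.
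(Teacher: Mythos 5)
Your overall strategy (modify $f_{0}$ by a scaled even bump so that the bifurcation condition (\ref{condition-bifurcation}) holds, exploit the scaling $\left\Vert A\chi\left(  \cdot/\epsilon\right)  \right\Vert _{W^{s,p}}\sim A\epsilon^{\frac{1}{p}-s}$ to see that the modification is small in $W^{s,p}$ exactly when $s<1+\frac{1}{p}$, then bifurcate periodic solutions of the potential ODE) is the same as the paper's. However, there is a genuine gap in the final bifurcation step. By fixing $A=\Delta\epsilon/C$ you arrange that the \emph{linearized} equation at the frozen profile $\tilde{f}_{0}$ has period exactly $T$, and you then invoke Crandall--Rabinowitz or a phase-plane argument to produce nontrivial $T$-periodic solutions of the now \emph{fixed} autonomous ODE $\phi^{\prime\prime}=h\left(  \phi\right)  $. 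This does not work: there is no bifurcation parameter left in that equation. The nontrivial periodic orbits near the center $\phi=0$ form a one-parameter family whose periods $T\left(  r\right)  $ tend to $2\pi/\sqrt{-h^{\prime}\left(  0\right)  }=T$ as the amplitude $r\rightarrow0$, but for a generic nonlinearity the period function is strictly monotone in $r$ near $r=0$, so no orbit in the family has period \emph{exactly} $T$. Crandall--Rabinowitz requires an external parameter whose variation moves the relevant eigenvalue across zero; with the profile frozen the only available parameter is the period itself, and then one only obtains orbits of period close to, not equal to, $T$.

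The paper's proof keeps precisely the degree of freedom you spent. It does not tune the bump so that the bifurcation integral equals $\left(  2\pi/T\right)  ^{2}$ exactly; instead it chooses widths $\delta\in\left(  \delta_{1},\delta_{2}\right)  $ so that the linearized frequencies strictly bracket $2\pi/T$, hence for every small amplitude $r$ one has $T\left(  \gamma,\delta_{1};r\right)  <T<T\left(  \gamma,\delta_{2};r\right)  $, and then solves $T\left(  \gamma,\delta;r\right)  =T$ for $\delta=\delta_{T}\left(  \gamma,r\right)  $ by continuity in $\delta$. Your argument can be repaired the same way: let $A/\epsilon$ range over a small interval around $\Delta/C$ and select, for each amplitude, the value returning period exactly $T$. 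The remainder of your outline (evenness of the bump so that Lemma \ref{lemma-even} applies, the two free-particle branches $g_{\pm}$, positivity, mass normalization, and the nested choice of parameters) is consistent with the paper's proof; note only that the degenerate case $\int f_{0}^{\prime}/v\,dv=\left(  2\pi/T\right)  ^{2}$, where your $\Delta=0$ and the bump disappears, must be handled separately (the paper dilates $f_{0}$ itself there).
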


\begin{proof}
Assume $f_{0}\left(  v\right)  $ is even in $\left[  -2a,2a\right]  $ $\left(
a>0\right)  $. Let $\sigma\left(  x\right)  =\sigma\left(  \left\vert
x\right\vert \right)  $ to be the cut-off function such that$\ \sigma\left(
x\right)  \in C_{0}^{\infty}\left(  \mathbf{R}\right)  ,$
\begin{equation}
\ 0\leq\sigma\left(  x\right)  \leq1;\ \sigma\left(  x\right)  =1\text{ when
}\left\vert x\right\vert \leq1\text{; }\sigma\left(  x\right)  =0\text{ when
}\left\vert x\right\vert \geq2\text{.} \label{cut-off}%
\end{equation}
By Lemma \ref{lemma-even}, there exists $g_{0}\left(  x\right)  \in C^{\infty
}\left(  \mathbf{R}\right)  $, supp $g_{0}\subset\left[  -\sqrt{2a},\sqrt
{2a}\right]  ,$ such that
\[
f_{0}\left(  v\right)  \sigma\left(  \frac{v}{a}\right)  =g_{0}\left(
v^{2}\right)  .
\]
Define $g_{+}\left(  x\right)  ,\ g_{-}\left(  x\right)  \in C^{\infty}\left(
\mathbf{R}\right)  $ by
\[
g_{\pm}\left(  x\right)  =\left\{
\begin{array}
[c]{cc}%
f_{0}\left(  \pm\sqrt{x}\right)  \left(  1-\sigma\left(  \frac{\sqrt{x}}%
{a}\right)  \right)  +g_{0}\left(  x\right)  & \text{if }x>\sqrt{a}\\
g_{0}\left(  x\right)  & \text{if }-\sqrt{2a}<x\leq\sqrt{a}\\
0 & \text{if }x\leq-\sqrt{2a}%
\end{array}
.\right.
\]
Then%
\[
f_{0}\left(  v\right)  =\left\{
\begin{array}
[c]{cc}%
g_{+}\left(  v^{2}\right)  & \text{if }v>0\\
g_{-}\left(  v^{2}\right)  & \text{if }v\leq0
\end{array}
\right.  .
\]
Since $f_{0}^{\prime}\left(  0\right)  =0$, $f_{0}\in W^{2,p}\left(
\mathbf{R}\right)  \cap C^{\infty}\left(  \mathbf{R}\right)  $, we have
$\left\vert \int_{\mathbf{R}}\frac{f_{0}^{\prime}\left(  v\right)  }%
{v}dv\right\vert <\infty$. Indeed,%
\begin{align*}
\left\vert \int_{\mathbf{R}}\frac{f_{0}^{\prime}\left(  v\right)  }%
{v}dv\right\vert  &  \leq\int_{\left\vert v\right\vert \leq1}\left\vert
\frac{f_{0}^{\prime}\left(  v\right)  }{v}\right\vert dv+\int_{\left\vert
v\right\vert \geq1}\left\vert \frac{f_{0}^{\prime}\left(  v\right)  }%
{v}\right\vert dv\\
&  \leq2\max_{\left\vert v\right\vert \leq1}\left\vert f_{0}^{\prime\prime
}\left(  v\right)  \right\vert +\left(  \int_{\left\vert v\right\vert \geq
1}\frac{1}{\left\vert v\right\vert ^{p^{\prime}}}dv\right)  ^{\frac
{1}{p^{\prime}}}\left\Vert f_{0}^{\prime}\right\Vert _{L^{p}}\\
&  =2\max_{\left\vert v\right\vert \leq1}\left\vert f_{0}^{\prime\prime
}\left(  v\right)  \right\vert +\left(  \frac{1}{p^{\prime}-1}\right)
^{\frac{1}{p^{\prime}}}\left\Vert f_{0}^{\prime}\right\Vert _{L^{p}}<\infty.
\end{align*}
We consider three cases.

Case 1: $\int_{\mathbf{R}}\frac{f_{0}^{\prime}\left(  v\right)  }{v}dv<\left(
\frac{2\pi}{T}\right)  ^{2}$. Choose a function $F\left(  v\right)  \in
C^{\infty}\left(  \mathbf{R}\right)  ,$ such that $F\in W^{2,p}\left(
\mathbf{R}\right)  ,\ F\left(  v\right)  $ is even,
\begin{equation}
F\left(  v\right)  >0,\int_{\mathbf{R}}F\left(  v\right)  dv<\infty
,~\int_{\mathbf{R}}v^{2}F\left(  v\right)  dv<\infty,\ \int_{\mathbf{R}}%
\frac{F^{\prime}\left(  v\right)  }{v}dv>0. \label{property-F}%
\end{equation}
An example of such functions is given by
\[
F\left(  v\right)  =\exp\left(  -\frac{\left(  v-v_{0}\right)  ^{2}}%
{2}\right)  +\exp\left(  -\frac{\left(  v+v_{0}\right)  ^{2}}{2}\right)  ,
\]
where $v_{0}$ is a large positive constant. Indeed,
\[
\int_{\mathbf{R}}\frac{F^{\prime}\left(  v\right)  }{v}dv=\int\frac{F\left(
v\right)  -F\left(  0\right)  }{v^{2}}dv>0,\ \text{when }v_{0}\ \text{is large
enough,}%
\]
and other properties in (\ref{property-F}) are easy to check. Since $F\left(
v\right)  $ is even, by Lemma \ref{lemma-even}, there exists $G\left(
x\right)  \in C^{\infty}\left(  \mathbf{R}\right)  \ $such that $F\left(
v\right)  =G\left(  v^{2}\right)  $. Let $\gamma,\delta>0$ be two small
parameters to be fixed, define
\begin{equation}
f_{\gamma,\delta}\left(  v\right)  =\frac{1}{1+C_{0}\gamma^{2}}\left[
f_{0}\left(  v\right)  +\frac{\gamma}{\delta}F\left(  \frac{v}{\gamma\delta
}\right)  \right]  , \label{defn-f-gamma-delta}%
\end{equation}
where $C_{0}=\int F\left(  v\right)  dv>0$. Note that $f_{\gamma,\delta}\in
C^{\infty}\left(  \mathbf{R}\right)  \cap W^{2,p}\left(  \mathbf{R}\right)
,\ \int_{\mathbf{R}}f_{\gamma,\delta}\left(  v\right)  dv=1,$ and
\[
\int_{\mathbf{R}}\frac{f_{\gamma,\delta}^{\prime}\left(  v\right)  }%
{v}dv=\frac{1}{1+C_{0}\gamma^{2}}\left[  \int_{\mathbf{R}}\frac{f_{0}^{\prime
}\left(  v\right)  }{v}dv+\frac{1}{\delta^{2}}\int_{\mathbf{R}}\frac
{F^{\prime}\left(  v\right)  }{v}dv\right]  .
\]
Since $\int_{\mathbf{R}}\frac{f_{0}^{\prime}\left(  v\right)  }{v}dv<\left(
\frac{2\pi}{T}\right)  ^{2}$, there exists $0<\delta_{1}<\delta_{2}$ such
that
\[
0<\int_{\mathbf{R}}\frac{f_{0}^{\prime}\left(  v\right)  }{v}dv+\frac
{1}{\delta_{2}^{2}}\int_{\mathbf{R}}\frac{F^{\prime}\left(  v\right)  }%
{v}dv<\left(  \frac{2\pi}{T}\right)  ^{2}<\int_{\mathbf{R}}\frac{f_{0}%
^{\prime}\left(  v\right)  }{v}dv+\frac{1}{\delta_{1}^{2}}\int_{\mathbf{R}%
}\frac{F^{\prime}\left(  v\right)  }{v}dv.
\]
Thus there exists $\gamma_{0}>0$ small enough, such that
\begin{equation}
0<\int_{\mathbf{R}}\frac{f_{\gamma,\delta_{2}}^{\prime}\left(  v\right)  }%
{v}dv<\left(  \frac{2\pi}{T}\right)  ^{2}<\int_{\mathbf{R}}\frac
{f_{\gamma,\delta_{1}}^{\prime}\left(  v\right)  }{v}dv,\text{ when }%
0<\gamma<\gamma_{0}\text{.} \label{ineq-period-0}%
\end{equation}
We look for steady BGK waves near the homogeneous states$\ \left(
f_{\gamma,\delta}\left(  v\right)  ,0\right)  $. Consider a steady BGK
solution $\left(  f^{0}\left(  x,v\right)  ,E^{0}\left(  x\right)  =-\beta
_{x}\left(  x\right)  \right)  $ to ($\ref{vpe})$. Denote $e=\frac{1}{2}%
v^{2}-\beta\left(  x\right)  $ to be the particle energy. From the steady
Vlasov equation, $f^{0}\left(  x,v\right)  $ is constant along each particle
trajectory. So for trapped particles with $-\max\beta<e<-\min\beta$, $f^{0}$
depends only on $e$, and for free particles with $e>-\min\beta$, $f^{0}$
depends on $e~$and the sign of the initial velocity $v$. We look for BGK waves
near $\left(  f_{\gamma,\delta},0\right)  $ of the form
\begin{equation}
f_{\gamma,\delta}^{\beta}\left(  x,v\right)  =\left\{
\begin{array}
[c]{cc}%
\frac{1}{1+C_{0}\gamma^{2}}\left[  g_{+}\left(  2e\right)  +\frac{\gamma
}{\delta}G\left(  \frac{2e}{\left(  \gamma\delta\right)  ^{2}}\right)  \right]
& \text{if }v>0\\
\frac{1}{1+C_{0}\gamma^{2}}\left[  g_{-}\left(  2e\right)  +\frac{\gamma
}{\delta}G\left(  \frac{2e}{\left(  \gamma\delta\right)  ^{2}}\right)  \right]
& \text{if }v\leq0
\end{array}
\right.  . \label{defn-f-steady}%
\end{equation}
For $\left\Vert \beta\right\Vert _{L^{\infty}}$ sufficiently small,
$f_{\gamma,\delta}^{\beta}\left(  x,v\right)  >0$ and it satisfies the steady
Vlasov equation, since in particular for trapped particles $f_{\gamma,\delta
}^{\beta}\left(  x,v\right)  =f_{\gamma,\delta}^{\beta}\left(  x,-v\right)  $.
To satisfy Poisson's equation, we solve the ODE
\begin{align*}
\beta_{xx}  &  =\int_{\mathbf{R}}f_{\gamma,\delta}^{\beta}\left(  x,v\right)
\ dv-1\\
&  =\frac{1}{1+C_{0}\gamma^{2}}\left[  \int_{v>0}g_{+}\left(  2e\right)
dv+\int_{v\leq0}g_{-}\left(  2e\right)  dv+\int_{\mathbf{R}}\frac{\gamma
}{\delta}G\left(  \frac{2e}{\left(  \gamma\delta\right)  ^{2}}\right)
dv\right]  -1\\
&  :=h_{\gamma,\delta}\left(  \beta\right)  .
\end{align*}
Then $h_{\gamma,\delta}\in C^{\infty}\left(  \mathbf{R}\right)  $. Since
$f_{\gamma,\delta}^{\beta=0}\left(  x,v\right)  =f_{\gamma,\delta}\left(
v\right)  $, so
\[
h_{\gamma,\delta}\left(  0\right)  =\int_{\mathbf{R}}f_{\gamma,\delta}\left(
v\right)  \ dv-1=0
\]
and
\begin{align*}
h_{\gamma,\delta}^{\prime}\left(  0\right)   &  =\frac{-2}{1+C_{0}\gamma^{2}%
}\left\{  \int_{v>0}g_{+}^{\prime}\left(  v^{2}\right)  dv+\int_{v\leq0}%
g_{-}^{\prime}\left(  v^{2}\right)  dv+\int_{\mathbf{R}}\frac{\gamma}{\delta
}\frac{1}{\left(  \gamma\delta\right)  ^{2}}G^{\prime}\left(  \frac{v^{2}%
}{\left(  \gamma\delta\right)  ^{2}}\right)  dv\right\} \\
&  =-\int_{\mathbf{R}}\frac{f_{\gamma,\delta}^{\prime}\left(  v\right)  }%
{v}dv.
\end{align*}
Thus when $0<\gamma<\gamma_{0},\ \delta_{1}<\delta<\delta_{2},\ $we have
$h_{\gamma,\delta}^{\prime}\left(  0\right)  <0$, which implies that $\beta=0$
is a center of the second order ODE
\begin{equation}
\beta_{xx}=h_{\gamma,\delta}\left(  \beta\right)  . \label{ode-beta}%
\end{equation}
So by the standard bifurcation theory of periodic solutions near a center, for
any fixed $\gamma\in\left(  0,\gamma_{0}\right)  ,\ $there exists $r_{0}>0$
(independent of $\delta\in\left(  \delta_{1},\delta_{2}\right)  $)$\,$, such
that for each $0<r<r_{0}\,$, there exists a $T\left(  \gamma,\delta;r\right)
-$periodic solution $\beta_{\gamma,\delta;r}$ to the ODE (\ref{ode-beta}) with
$\left\Vert \beta_{\gamma,\delta;r}\right\Vert _{H^{2}\left(  0,T\left(
\gamma,\delta;r\right)  \right)  }=r$. Moreover,
\[
\left(  \frac{2\pi}{T\left(  \gamma,\delta;r\right)  }\right)  ^{2}%
\rightarrow\int_{\mathbf{R}}\frac{f_{\gamma,\delta}^{\prime}\left(  v\right)
}{v}dv\text{, when }r\rightarrow0.
\]
By (\ref{ineq-period-0}), when $r$ is small enough,
\[
T\left(  \gamma,\delta_{1};r\right)  <T<T\left(  \gamma,\delta_{2};r\right)
.
\]
Since $T\left(  \gamma,\delta;r\right)  $ is continuous
in
$\delta,$ for each $\gamma,r>0$ small enough, there exists $\delta_{T}\left(
\gamma,r\right)  \in\left(  \delta_{1},\delta_{2}\right)  \,$, such that
$T\left(  \gamma,\delta_{T};r\right)  =T$. Define $f_{\gamma,r}^{T}\left(
x,v\right)  =f_{\gamma,\delta_{T}}^{\beta}\left(  x,v\right)  $ from
(\ref{defn-f-steady}) by setting $\beta=\beta_{\gamma,\delta_{T};r}$ and let
$E_{\gamma,r}\left(  x\right)  =-\beta_{\gamma,\delta_{T};r}^{\prime}\left(
x\right)  $. Then $\left(  f_{\gamma,r}^{T}\left(  x,v\right)  ,E_{\gamma
,r}\left(  x\right)  \right)  $ is a nontrivial BGK solution to ($\ref{vpe})$
with $x-$period $T$. For any fixed $\gamma>0$, let
\[
\delta\left(  \gamma\right)  =\lim_{r\rightarrow0}\delta_{T}\left(
\gamma,r\right)  \in\left[  \delta_{1},\delta_{2}\right]  .
\]
By the dominant convergence theorem, it is easy to show that
\[
\left\Vert f_{\gamma,r}^{T}\left(  x,v\right)  -f_{\gamma,\delta\left(
\gamma\right)  }\left(  v\right)  \right\Vert _{L_{x,v}^{1}}+\ \int_{0}%
^{T}\int_{\mathbf{R}}v^{2}\left\vert f_{\gamma,r}^{T}\left(  x,v\right)
-f_{\gamma,\delta\left(  \gamma\right)  }\left(  v\right)  \right\vert
\ dxdv\ \ \ \
\]%
\[
\ +\left\Vert f_{\gamma,r}^{T}\left(  x,v\right)  -f_{\gamma,\delta\left(
\gamma\right)  }\left(  v\right)  \right\Vert _{W_{x,v}^{2,p}}\rightarrow
0,\ \ \ \
\]
when $r=\left\vert \beta_{\gamma,\delta_{T};r}\right\vert _{H^{2}\left(
0,T\right)  }\rightarrow0.\ $Since $s<1+\frac{1}{p}<2,\ $for any $\gamma>0$
small, there exists $r=r\left(  \gamma,\varepsilon\right)  >0$ such that
\[
\left\Vert f_{\gamma,r}^{T}\left(  x,v\right)  -f_{\gamma,\delta\left(
\gamma\right)  }\left(  v\right)  \right\Vert _{L_{x,v}^{1}}+\ \int_{0}%
^{T}\int_{\mathbf{R}}v^{2}\left\vert f_{\gamma,r}^{T}\left(  x,v\right)
-f_{\gamma,\delta\left(  \gamma\right)  }\left(  v\right)  \right\vert
\ dxdv\ \
\]%
\[
\ +\left\Vert f_{\gamma,r}^{T}\left(  x,v\right)  -f_{\gamma,\delta\left(
\gamma\right)  }\left(  v\right)  \right\Vert _{W_{x,v}^{s,p}}<\frac
{\varepsilon}{2},
\]
Next, we show that the modified homogeneous state$\ f_{\gamma,\delta\left(
\gamma\right)  }\left(  v\right)  $ is arbitrarily close to $f_{0}\left(
v\right)  $ in the sense that
\[
\left\Vert f_{0}\left(  v\right)  -f_{\gamma,\delta\left(  \gamma\right)
}\left(  v\right)  \right\Vert _{L^{1}}+\ T\int_{\mathbf{R}}v^{2}\left\vert
f_{0}\left(  v\right)  -f_{\gamma,\delta\left(  \gamma\right)  }\left(
v\right)  \right\vert \ dv+\left\Vert f_{0}\left(  v\right)  -f_{\gamma
,\delta\left(  \gamma\right)  }\left(  v\right)  \right\Vert _{W_{x,v}^{s,p}%
}\rightarrow0,
\]
when $\gamma\rightarrow0.\ $Note that the deviation is%
\[
f_{0}\left(  v\right)  -f_{\gamma,\delta\left(  \gamma\right)  }\left(
v\right)  =\frac{1}{1+C_{0}\gamma^{2}}\left[  -C_{0}\gamma^{2}f_{0}\left(
v\right)  -\frac{\gamma}{\delta}F\left(  \frac{v}{\gamma\delta}\right)
\right]  .
\]
Since $\delta\left(  \gamma\right)  \in\left[  \delta_{1},\delta_{2}\right]
$,\ when $\gamma\rightarrow0$,
\[
\int_{\mathbf{R}}\frac{\gamma}{\delta\left(  \gamma\right)  }F\left(  \frac
{v}{\gamma\delta\left(  \gamma\right)  }\right)  dv=\gamma^{2}\int
_{\mathbf{R}}F\left(  v\right)  dv\rightarrow0,
\]%
\[
\int_{\mathbf{R}}v^{2}\frac{\gamma}{\delta\left(  \gamma\right)  }F\left(
\frac{v}{\gamma\delta\left(  \gamma\right)  }\right)  dv\ =\gamma^{4}%
\delta\left(  \gamma\right)  ^{2}\int_{\mathbf{R}}v^{2}F\left(  v\right)
dv\rightarrow0,\
\]%
\[
\left\Vert \frac{\gamma}{\delta\left(  \gamma\right)  }F\left(  \frac
{v}{\gamma\delta\left(  \gamma\right)  }\right)  \right\Vert _{L^{p}}%
=\gamma^{1+\frac{1}{p}}\delta\left(  \gamma\right)  ^{\frac{1}{p}-1}\left\Vert
F\left(  v\right)  \right\Vert _{L^{p}}\rightarrow0,
\]%
\[
\left\Vert \frac{d}{dv}\left(  \frac{\gamma}{\delta\left(  \gamma\right)
}F\left(  \frac{v}{\gamma\delta\left(  \gamma\right)  }\right)  \right)
\right\Vert _{L^{p}}=\gamma^{\frac{1}{p}}\delta\left(  \gamma\right)
^{\frac{1}{p}-2}\left\Vert F^{\prime}\left(  v\right)  \right\Vert _{L^{p}%
}\rightarrow0,
\]
and thus%
\[
\left\Vert f_{0}\left(  v\right)  -f_{\gamma,\delta\left(  \gamma\right)
}\left(  v\right)  \right\Vert _{L^{1}}+\ T\int_{\mathbf{R}}v^{2}\left\vert
f_{0}\left(  v\right)  -f_{\gamma,\delta\left(  \gamma\right)  }\left(
v\right)  \right\vert \ dv+\left\Vert f_{0}\left(  v\right)  -f_{\gamma
,\delta\left(  \gamma\right)  }\left(  v\right)  \right\Vert _{W_{x,v}^{1,p}%
}\rightarrow0.
\]
It remains to check
\[
\left\Vert \left\vert D\right\vert ^{s-1}\frac{d}{dv}\left(  f_{0}\left(
v\right)  -f_{\gamma,\delta\left(  \gamma\right)  }\left(  v\right)  \right)
\right\Vert _{L^{p}}\rightarrow0,\text{ when }\gamma\rightarrow0,
\]
where $\left\vert D\right\vert ^{\delta}$ $\left(  \delta>0\right)  \ $is the
fractional differentiation operator with the Fourier symbol $\left\vert
\xi\right\vert ^{\delta}.$ By using the scaling equality%
\[
\left(  \left\vert D\right\vert ^{\delta}\chi_{d}\right)  \left(  v\right)
=\frac{1}{d^{\delta}}\left(  \left\vert D\right\vert ^{\delta}\chi\right)
\left(  \frac{v}{d}\right)  ,
\]
where $\chi_{d}\left(  v\right)  =\chi\left(  v/d\right)  $, we have \
\[
\left\Vert \left\vert D\right\vert ^{s}\frac{d}{dv}\left(  \frac{\gamma
}{\delta\left(  \gamma\right)  }F\left(  \frac{v}{\gamma\delta\left(
\gamma\right)  }\right)  \right)  \right\Vert _{L^{p}}=\gamma^{1-s+\frac{1}%
{p}}\delta\left(  \gamma\right)  ^{-1-s+\frac{1}{p}}\left\Vert \left(
\left\vert D\right\vert ^{s}F^{\prime}\right)  \left(  v\right)  \right\Vert
_{L^{p}}\rightarrow0,
\]
when $\gamma\rightarrow0$, since $s<1+\frac{1}{p}$. So we can choose
$\gamma=\gamma\left(  \varepsilon\right)  >0$ small such that
\[
\left\Vert f_{0}-f_{\gamma,\delta\left(  \gamma\right)  }\right\Vert
_{L^{1}\left(  \mathbf{R}\right)  }+\ T\int_{\mathbf{R}}v^{2}\left\vert
f_{0}\left(  v\right)  -f_{\gamma,\delta\left(  \gamma\right)  }\left(
v\right)  \right\vert \ dv+\left\Vert f_{0}-f_{\gamma,\delta\left(
\gamma\right)  }\right\Vert _{W^{s,p}\left(  \mathbf{R}\right)  }%
<\frac{\varepsilon}{2}.
\]
Then
\[
\left(  f_{\varepsilon},E_{\varepsilon}\right)  =\left(  f_{\gamma\left(
\varepsilon\right)  ,r\left(  \gamma\left(  \varepsilon\right)  ,\varepsilon
\right)  }^{T}\left(  x,v\right)  ,E_{\gamma\left(  \varepsilon\right)
,r\left(  \gamma\left(  \varepsilon\right)  ,\varepsilon\right)  }\left(
x\right)  \right)
\]
is a steady BGK wave solution satisfying (\ref{norm-proposition}).

Case 2: $\int_{\mathbf{R}}\frac{f_{0}^{\prime}\left(  v\right)  }{v}dv>\left(
\frac{2\pi}{T}\right)  ^{2}$. Choose $F\left(  v\right)  =\exp\left(
-\frac{v^{2}}{2}\right)  ,$ then $\int_{\mathbf{R}}\frac{F^{\prime}\left(
v\right)  }{v}dv<0$. Define $f_{\gamma,\delta}\left(  v\right)  $ as in Case 1
(see (\ref{defn-f-gamma-delta})). Then there exists $0<\delta_{1}<\delta_{2}$
such that
\[
0<\int_{\mathbf{R}}\frac{f_{0}^{\prime}\left(  v\right)  }{v}dv+\frac
{1}{\delta_{1}^{2}}\int_{\mathbf{R}}\frac{F^{\prime}\left(  v\right)  }%
{v}dv<\left(  \frac{2\pi}{T}\right)  ^{2}<\int_{\mathbf{R}}\frac{f_{0}%
^{\prime}\left(  v\right)  }{v}dv+\frac{1}{\delta_{2}^{2}}\int_{\mathbf{R}%
}\frac{F^{\prime}\left(  v\right)  }{v}dv.
\]
The rest of the proof is the same as in Case 1.

Case 3: $\int_{\mathbf{R}}\frac{f_{0}^{\prime}\left(  v\right)  }{v}dv=\left(
\frac{2\pi}{T}\right)  ^{2}$. For $\delta>0,\ $define
\[
f_{\delta}\left(  v\right)  =\frac{1}{\delta}f_{0}\left(  \frac{v}{\delta
}\right)  .
\]
Then $f_{\delta}\in C^{\infty}\left(  \mathbf{R}\right)  \cap W^{2,p}\left(
\mathbf{R}\right)  ,\ f_{\delta}\left(  v\right)  >0,\ \int_{\mathbf{R}%
}f_{\delta}\left(  v\right)  dv=1,$ and
\[
\int_{\mathbf{R}}\frac{f_{\delta}^{\prime}\left(  v\right)  }{v}dv=\frac
{1}{\delta^{2}}\int_{\mathbf{R}}\frac{f_{0}^{\prime}\left(  v\right)  }{v}dv.
\]
For any $\varepsilon>0$ small, there exist $0<\delta_{1}\left(  \varepsilon
\right)  <1<\delta_{2}\left(  \varepsilon\right)  $ such that
\[
0<\frac{1}{\delta_{2}^{2}}\int_{\mathbf{R}}\frac{f_{0}^{\prime}\left(
v\right)  }{v}dv<\left(  \frac{2\pi}{T}\right)  ^{2}<\frac{1}{\delta_{1}^{2}%
}\int_{\mathbf{R}}\frac{f_{0}^{\prime}\left(  v\right)  }{v}dv
\]
and when $\delta\in\left(  \delta_{1}\left(  \varepsilon\right)  ,\delta
_{2}\left(  \varepsilon\right)  \right)  ,$
\[
\left\Vert f_{0}\left(  v\right)  -f_{\delta}\left(  v\right)  \right\Vert
_{L^{1}\left(  \mathbf{R}\right)  }+\ T\int_{\mathbf{R}}v^{2}\left\vert
f_{0}\left(  v\right)  -f_{\delta}\left(  v\right)  \right\vert
\ dv+\left\Vert f_{0}\left(  v\right)  -f_{\delta}\left(  v\right)
\right\Vert _{W^{2,p}\left(  \mathbf{R}\right)  }<\frac{\varepsilon}{2}.
\]
For $\delta\in\left(  \delta_{1}\left(  \varepsilon\right)  ,\delta_{2}\left(
\varepsilon\right)  \right)  ,\ $we consider bifurcation of steady BGK waves
near $\left(  f_{\delta}\left(  v\right)  ,0\right)  $, which are of the form
\begin{equation}
f_{\delta}^{\beta}\left(  x,v\right)  =\left\{
\begin{array}
[c]{cc}%
\frac{1}{\delta}g_{+}\left(  \frac{2e}{\delta^{2}}\right)  & \text{if }v>0\\
\frac{1}{\delta}g_{-}\left(  \frac{2e}{\delta^{2}}\right)  & \text{if }v\leq0
\end{array}
\right.  ,\text{ }e=\frac{1}{2}v^{2}-\beta\left(  x\right)  ,\ E=-\beta
_{x}\text{. } \label{defn-f-steady-case3}%
\end{equation}
The existence of BGK waves is then reduced to solve the ODE
\begin{equation}
\beta_{xx}=\int_{\mathbf{R}}f_{\delta}^{\beta}\left(  x,v\right)
\ dv-1:=h_{\delta}\left(  \beta\right)  \label{ode-beta-case3}%
\end{equation}
As in Case 1, for any $\delta\in\left(  \delta_{1}\left(  \varepsilon\right)
,\delta_{2}\left(  \varepsilon\right)  \right)  ,\ \exists\ r_{0}\left(
\varepsilon\right)  >0$ (independent of $\delta$) such that for each
$0<r<r_{0}\,$, there exists a $T\left(  \delta;r\right)  -$periodic solution
$\beta_{\delta;r}$ to the ODE $(\ref{ode-beta-case3})$ with $\left\Vert
\beta_{\delta;r}\right\Vert _{H^{2}\left(  0,T\left(  \delta;r\right)
\right)  }=r$. Moreover,
\[
\left(  \frac{2\pi}{T\left(  \delta;r\right)  }\right)  ^{2}\rightarrow
\int_{\mathbf{R}}\frac{f_{\delta}^{\prime}\left(  v\right)  }{v}dv\text{, when
}r\rightarrow0.
\]
So when $r$ is small enough, $T\left(  \delta_{1};r\right)  <T<T\left(
\delta_{2};r\right)  \ $and there exists $\delta_{T}\left(  r,\varepsilon
\right)  \in\left(  \delta_{1}\left(  \varepsilon\right)  ,\delta_{2}\left(
\varepsilon\right)  \right)  $ such that $T\left(  \delta_{T};r\right)  =T$.
Define $f_{r,\varepsilon}^{T}\left(  x,v\right)  =f_{\delta_{T}\left(
r,\varepsilon\right)  }^{\beta}\left(  x,v\right)  $ with $\beta=\beta
_{\delta_{T}\left(  r,\varepsilon\right)  ;r}$ in (\ref{defn-f-steady-case3})
and $E_{r,\varepsilon}\left(  x\right)  =-\beta_{\delta_{T}\left(
r,\varepsilon\right)  ;r}^{\prime}\left(  x\right)  $. Then $\left(
f_{r,\varepsilon}^{T}\left(  x,v\right)  ,E_{r,\varepsilon}\left(  x\right)
\right)  $ is a nontrivial BGK solution to ($\ref{vpe})$ with $x-$period $T$.
Let%
\[
\delta\left(  \varepsilon\right)  =\lim_{r\rightarrow0}\delta_{T}\left(
r,\varepsilon\right)  \in\left[  \delta_{1}\left(  \varepsilon\right)
,\delta_{2}\left(  \varepsilon\right)  \right]
\]
As in Case 1, by dominance convergence theorem, we can choose $r=r\left(
\varepsilon\right)  >0$ small enough, such that
\[
\left\Vert f_{r\left(  \varepsilon\right)  ,\varepsilon}^{T}\left(
x,v\right)  -f_{\delta\left(  \varepsilon\right)  }\left(  v\right)
\right\Vert _{L_{x,v}^{1}}+\ \int_{0}^{T}\int_{\mathbf{R}}v^{2}\left\vert
f_{r\left(  \varepsilon\right)  ,\varepsilon}^{T}\left(  x,v\right)
-f_{\delta\left(  \varepsilon\right)  }\left(  v\right)  \right\vert \ dxdv\
\]%
\[
+\left\Vert f_{r\left(  \varepsilon\right)  ,\varepsilon}^{T}\left(
x,v\right)  -f_{\delta\left(  \varepsilon\right)  }\left(  v\right)
\right\Vert _{W_{x,v}^{2,p}}<\frac{\varepsilon}{2}.
\]
Then
\[
\left(  f_{\varepsilon},E_{\varepsilon}\right)  =\left(  f_{r\left(
\varepsilon\right)  ,\varepsilon}^{T}\left(  x,v\right)  ,E_{r\left(
\varepsilon\right)  ,\varepsilon}\left(  x\right)  \right)
\]
is a steady BGK wave solution satisfying
\[
\left\Vert f_{\varepsilon}-f_{0}\right\Vert _{L_{x,v}^{1}}+\ \int_{0}^{T}%
\int_{\mathbf{R}}v^{2}\left\vert f_{0}-f_{\varepsilon}\right\vert
\ dxdv+\left\Vert f_{\varepsilon}-f_{0}\right\Vert _{W_{x,v}^{2,p}%
}<\varepsilon,
\]
which certainly implies (\ref{norm-proposition}). This finishes the proof of
the Proposition.
\end{proof}

To finish the proof of Theorem \ref{thm-existence}, we need the following
approximation result.

\begin{lemma}
\label{lemma-approxi}Fixed $p>1$, $0\leq s<1+\frac{1}{p}$ and $c\in\mathbf{R}%
$. Assume $f_{0}\in W^{s,p}\left(  \mathbf{R}\right)  ,\ f_{0}>0$,
$\int_{\mathbf{R}}f_{0}\left(  v\right)  dv=1$, and $\int_{\mathbf{R}}%
v^{2}f_{0}\left(  v\right)  dv<\infty$. Then for any $\varepsilon>0$, there
exists $f_{\varepsilon}\left(  v\right)  \in C^{\infty}\left(  \mathbf{R}%
\right)  \cap W^{2,p}\left(  \mathbf{R}\right)  $, such that $f_{\varepsilon}$
is even near$\ v=c,$ and
\[
\left\Vert f_{\varepsilon}-f_{0}\right\Vert _{L^{1}\left(  \mathbf{R}\right)
}+\int_{\mathbf{R}}v^{2}\left\vert f_{\varepsilon}-f_{0}\right\vert
\ dxdv+\left\Vert f_{\varepsilon}-f_{0}\right\Vert _{W^{s,p}\left(
\mathbf{R}\right)  }\leq\varepsilon.
\]

\end{lemma}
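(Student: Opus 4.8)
The plan is to produce $f_\varepsilon$ in two stages: a standard mollification to gain smoothness and $W^{2,p}$ regularity, followed by a localized symmetrization about $v=c$ that enforces evenness while costing only $o(1)$ in every norm in the conclusion. The order of limits is essential: I will fix the mollification scale first, and only afterwards send the localization scale to zero, so that the smoothed function — and hence its first two derivatives — is already a fixed bounded object when the symmetrization is estimated.

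First, let $\rho_\delta$ be a nonnegative even mollifier with $\int\rho_\delta=1$ and $\operatorname{supp}\rho_\delta\subset[-\delta,\delta]$, and set $g=\rho_\delta*f_0$. Then $g\in C^\infty$, $g>0$, and since $f_0\in L^p$ we have $g\in W^{m,p}$ for every $m$ by applying Young's inequality to $\rho_\delta^{(m)}*f_0$; in particular $g\in C^\infty\cap W^{2,p}$ and $\int g=1$. As $\delta\to0$ we have $\|g-f_0\|_{L^1}\to0$, $\|g-f_0\|_{W^{s,p}}\to0$ (mollification converges in $W^{s,p}$ since translation is continuous there), and $\int v^2|g-f_0|\,dv\to0$; the last uses $\int v^2f_0<\infty$ together with the elementary bound $|v^2-u^2|\le\delta(2|u|+\delta)$ on $\operatorname{supp}\rho_\delta(v-\cdot)$. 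Fix $\delta$ so that $g$ lies within $\varepsilon/2$ of $f_0$ in all three norms; from now on $g$ is a fixed smooth function with $\sup|g'|,\sup|g''|<\infty$.

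Next, write $w=v-c$, set $g_{\mathrm{odd}}(w)=\tfrac12\bigl(g(c+w)-g(c-w)\bigr)$ (smooth, odd, $g_{\mathrm{odd}}(0)=0$), and pick $\psi\in C_0^\infty$ with $\psi=1$ on $[-1,1]$ and $\operatorname{supp}\psi\subset[-2,2]$. Define $f_\varepsilon$ by subtracting the localized odd part,
\[
f_\varepsilon(c+w)=g(c+w)-h(w),\qquad h(w)=\psi(w/\eta)\,g_{\mathrm{odd}}(w).
\]
On $|w|\le\eta$ this equals $\tfrac12\bigl(g(c+w)+g(c-w)\bigr)$, so $f_\varepsilon$ is even on $[c-\eta,c+\eta]$, and $f_\varepsilon\in C^\infty\cap W^{2,p}$ since $h\in C_0^\infty$. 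Two structural properties come for free: $\psi(w/\eta)$ is even and $g_{\mathrm{odd}}$ is odd, so $h$ is odd, $\int h\,dv=0$, and the normalization $\int f_\varepsilon=1$ is preserved; and on the core $|w|\le\eta$ one has $f_\varepsilon=\tfrac12(g(c+w)+g(c-w))>0$, while elsewhere $|h|$ is small, so $f_\varepsilon>0$ everywhere for small $\eta$. It remains to show $\|h\|_{L^1}+\int v^2|h|+\|h\|_{W^{s,p}}\to0$ as $\eta\to0$.

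The estimate on $h$ is the crux, and it is exactly here that $s<1+\tfrac1p$ enters. Since $g$ is fixed and smooth, $|g_{\mathrm{odd}}(w)|\le C|w|$, so $h$ is supported in $|w|\le2\eta$ with $\|h\|_{L^1}+\int v^2|h|\le C\eta^2\to0$. For the Sobolev norm, the bound $|g_{\mathrm{odd}}(w)|\le C|w|$ and the chain rule on $\psi(w/\eta)$ give the elementary scalings
\[
\|h\|_{L^p}\le C\eta^{1+\frac1p},\qquad \|h\|_{W^{1,p}}\le C\eta^{\frac1p},\qquad \|h\|_{W^{2,p}}\le C\eta^{\frac1p-1}.
\]
Interpolating between the two neighboring integer-order Sobolev spaces yields, uniformly for $0\le s<1+\tfrac1p$,
\[
\|h\|_{W^{s,p}}\le C\,\eta^{\,1+\frac1p-s}\longrightarrow0,
\]
since the exponent $1+\tfrac1p-s$ is strictly positive; choosing $\eta$ so that this contribution is below $\varepsilon/2$ finishes the proof. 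The main obstacle is precisely this bound: symmetrization forces a modification of $g$ on scale $\eta$, and the cheapest modification removing the odd part carries $W^{2,p}$ mass growing like $\eta^{1/p-1}$, so only the gap $s<1+\tfrac1p$ makes the interpolated fractional norm decay. This is the same threshold $1+\tfrac1p-s>0$ already met in the $|D|^s$ computation of Proposition \ref{propo-existence}, and it is sharp: for $s\ge1+\tfrac1p$ the correction no longer tends to zero.
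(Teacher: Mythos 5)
Your proof is correct, and it uses the paper's decomposition exactly — mollify first, then subtract a cutoff-localized odd part about $v=c$ — but it verifies the crucial $W^{s,p}$ smallness of the symmetrization error by a different mechanism. The paper differentiates the correction, writes $\frac{f_{\delta_1}(v)-f_{\delta_1}(-v)}{2v}=\int_0^1 f_{\delta_1}'\left(\left(2\tau-1\right)v\right)d\tau$ to place it in $W^{s-1,p}$, and then invokes the Strichartz multiplier theorem (Lemma \ref{lemma-strichartz}) together with a scaling argument to show that multiplication by the rescaled cutoff sends the $W^{s-1,p}$ norm to zero; this needs only $s-1<\frac{1}{p}$ and the local $L^p$-smallness of $f_{\delta_1}'$ on shrinking intervals. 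You instead exploit that the mollified profile $g$ has globally bounded derivatives (available because $f_0\in L^p$ and the mollifier's derivatives lie in $L^{p'}$), derive the explicit integer-order scalings $\left\Vert h\right\Vert_{W^{k,p}}\leq C\eta^{1+\frac{1}{p}-k}$ for $k=0,1,2$, and interpolate to get $\left\Vert h\right\Vert_{W^{s,p}}\leq C\eta^{1+\frac{1}{p}-s}$, which decays precisely when $s<1+\frac{1}{p}$. Your route is more elementary — no multiplier theorem is needed, and it makes the sharpness of the critical exponent transparent — at the cost of relying on $L^\infty$ bounds for $g'$ and $g''$, whereas the paper's argument requires weaker control on the smoothed profile. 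The one step you treat too quickly is the weighted convergence $\int v^2\left\vert g-f_0\right\vert dv\rightarrow0$: the pointwise bound $\left\vert v^2-u^2\right\vert\leq\delta\left(2\left\vert u\right\vert+\delta\right)$ controls the difference of the integrals $\int v^2 g$ and $\int v^2 f_0$, but to bound the integral of the absolute difference one should pass through continuity of translation in $L^1\left(\left(1+v^2\right)dv\right)$; this is standard, and the paper is equally terse at the corresponding point.
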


\begin{proof}
Let $\eta\left(  x\right)  $ be the standard mollifier function, that is,
\[
\eta\left(  x\right)  =\left\{
\begin{array}
[c]{cc}%
C\exp\left(  \frac{1}{x^{2}-1}\right)  & \ \ \ \text{if }\left\vert
x\right\vert <1\\
0 & \ \ \ \text{if }\left\vert x\right\vert \geq1
\end{array}
\right.  ,
\]
and $\eta_{\delta_{1}}\left(  x\right)  =\frac{1}{\delta_{1}}\eta\left(
\frac{x}{\delta_{1}}\right)  $. Define $f_{\delta_{1}}\left(  v\right)
:=\eta_{\delta_{1}}\left(  v\right)  \ast f_{0}\left(  v\right)  .$ Then by
the properties of mollifiers, we have%
\[
f_{\delta_{1}}\in C^{\infty}\left(  \mathbf{R}\right)  ,\ f_{\delta_{1}%
}\left(  v\right)  >0,\ \int_{\mathbf{R}}f_{\delta_{1}}\left(  v\right)
dv=1,
\]
and when $\delta_{1}$ is small enough
\begin{equation}
\left\Vert f_{\delta_{1}}-f_{0}\right\Vert _{L^{1}\left(  \mathbf{R}\right)
}+\int_{\mathbf{R}}v^{2}\left\vert f_{\delta_{1}}-f_{0}\right\vert
\ dxdv+\left\Vert f_{\delta_{1}}-f_{0}\right\Vert _{W^{s,p}\left(
\mathbf{R}\right)  }\leq\frac{\varepsilon}{2}. \label{estimate-delta-1}%
\end{equation}
We can assume $f_{\delta_{1}}\left(  v\right)  \in W^{2,p}$. Since otherwise,
we can modify $f_{\delta_{1}}\left(  v\right)  $ near infinity by cut-off to
get $\tilde{f}_{\delta_{1}}\left(  v\right)  \ $such that $\tilde{f}%
_{\delta_{1}}\left(  v\right)  \in W^{2,p}$ and
\[
\left\Vert f_{\delta_{1}}-\tilde{f}_{\delta_{1}}\right\Vert _{L^{1}\left(
\mathbf{R}\right)  }+\int_{\mathbf{R}}v^{2}\left\vert f_{\delta_{1}}-\tilde
{f}_{\delta_{1}}\right\vert \ dxdv+\left\Vert f_{\delta_{1}}-\tilde{f}%
_{\delta_{1}}\right\Vert _{W^{s,p}\left(  \mathbf{R}\right)  }\leq
\frac{\varepsilon}{2}.
\]
\ Solely to simplify notations, we set $c=0$ below$.$ Let $\sigma\left(
x\right)  =\sigma\left(  \left\vert x\right\vert \right)  $ to be the cut-off
function defined by (\ref{cut-off}). Let $\delta_{2}>0$ be a small number, and
define
\begin{align*}
f_{\delta_{1},\delta_{2}}\left(  v\right)   &  =f_{\delta_{1}}\left(
v\right)  \left(  1-\sigma\left(  \frac{v}{\delta_{2}}\right)  \right)
+\left(  \frac{f_{\delta_{1}}\left(  v\right)  +f_{\delta_{1}}\left(
-v\right)  }{2}\right)  \sigma\left(  \frac{v}{\delta_{2}}\right) \\
&  =f_{\delta_{1}}\left(  v\right)  -\left(  \frac{f_{\delta_{1}}\left(
v\right)  -f_{\delta_{1}}\left(  -v\right)  }{2}\right)  \sigma\left(
\frac{v}{\delta_{2}}\right)
\end{align*}
Then obviously,
\[
f_{\delta_{1},\delta_{2}}\in C^{\infty}\left(  \mathbf{R}\right)
,\ f_{\delta_{1},\delta_{2}}\left(  v\right)  >0,\ \int_{\mathbf{R}}%
f_{\delta_{1},\delta_{2}}\left(  v\right)  dv=\int_{\mathbf{R}}f_{\delta_{1}%
}\left(  v\right)  dv=1,
\]
and $f_{\delta_{1},\delta_{2}}\left(  v\right)  $ is even on the interval
$\left[  -\delta_{2},\delta_{2}\right]  $. Below, we prove that: when
$\delta_{2}$ is small enough
\begin{equation}
\left\Vert f_{\delta_{1}}-f_{0}\right\Vert _{L^{1}\left(  \mathbf{R}\right)
}+\int_{\mathbf{R}}v^{2}\left\vert f_{\delta_{1}}-f_{0}\right\vert
\ dv+\left\Vert f_{\delta_{1}}-f_{0}\right\Vert _{W^{s,p}\left(
\mathbf{R}\right)  }\leq\frac{\varepsilon}{2}. \label{estimate-delta-2}%
\end{equation}
Since%
\[
\left\Vert f_{\delta_{1}}-f_{\delta_{1},\delta_{2}}\right\Vert _{L^{1}}%
\leq\int_{\left\vert v\right\vert \leq2\delta_{2}}f_{\delta_{1}}\left(
v\right)  dv
\]%
\[
\int_{\mathbf{R}}v^{2}\left\vert f_{\delta_{1}}-f_{\delta_{1},\delta_{2}%
}\right\vert \ dv\leq\left(  2\delta_{2}\right)  ^{2}\int_{\left\vert
v\right\vert \leq2\delta_{2}}f_{\delta_{1}}\left(  v\right)  dv,
\]%
\[
\left\Vert f_{\delta_{1}}-f_{\delta_{1},\delta_{2}}\right\Vert _{L^{p}}%
\leq\left\Vert f_{\delta_{1}}\right\Vert _{L^{p}\left[  -2\delta_{2}%
,2\delta_{2}\right]  },
\]
and%
\[
\partial_{v}\left(  f_{\delta_{1}}-f_{\delta_{1},\delta_{2}}\right)  =\left(
\frac{f_{\delta_{1}}^{\prime}\left(  v\right)  +f_{\delta_{1}}^{\prime}\left(
-v\right)  }{2}\right)  \sigma\left(  \frac{v}{\delta_{2}}\right)
+\sigma^{\prime}\left(  \frac{v}{\delta_{2}}\right)  \frac{f_{\delta_{1}%
}\left(  v\right)  -f_{\delta_{1}}\left(  -v\right)  }{2\delta_{2}},
\]%
\begin{align*}
\left\Vert \partial_{v}\left(  f_{\delta_{1}}-f_{\delta_{1},\delta_{2}%
}\right)  \right\Vert _{L^{p}}  &  \leq\left\Vert f_{\delta_{1}}^{\prime
}\right\Vert _{L^{p}\left[  -2\delta_{2},2\delta_{2}\right]  }+\max\left\vert
\sigma^{\prime}\right\vert \left\Vert \frac{f_{\delta_{1}}\left(  v\right)
-f_{\delta_{1}}\left(  -v\right)  }{2\delta_{2}}\right\Vert _{L^{p}\left\{
\delta_{2}\leq\left\vert v\right\vert \leq2\delta_{2}\right\}  }\\
&  \leq\left\Vert f_{\delta_{1}}^{\prime}\right\Vert _{L^{p}\left[
-2\delta_{2},2\delta_{2}\right]  }+\frac{1}{2\delta_{2}}\max\left\vert
\sigma^{\prime}\right\vert \left\Vert \int_{-2\delta_{2}}^{2\delta_{2}%
}\left\vert f_{\delta_{1}}^{\prime}\right\vert dv\right\Vert _{L^{p}\left\{
\delta_{2}\leq\left\vert v\right\vert \leq2\delta_{2}\right\}  }\\
&  \leq\left\Vert f_{\delta_{1}}^{\prime}\right\Vert _{L^{p}\left[
-2\delta_{2},2\delta_{2}\right]  }+\frac{1}{2\delta_{2}}\max\left\vert
\sigma^{\prime}\right\vert \left(  4\delta_{2}\right)  ^{\frac{1}{p^{\prime}}%
}\left\Vert f_{\delta_{1}}^{\prime}\right\Vert _{L^{p}\left[  -2\delta
_{2},2\delta_{2}\right]  }\left(  2\delta_{2}\right)  ^{\frac{1}{p}}\\
&  \leq\left(  1+2\max\left\vert \sigma^{\prime}\right\vert \right)
\left\Vert f_{\delta_{1}}^{\prime}\right\Vert _{L^{p}\left[  -2\delta
_{2},2\delta_{2}\right]  },
\end{align*}
so when $\delta_{2}\rightarrow0$,
\[
\left\Vert f_{\delta_{1}}-f_{\delta_{1},\delta_{2}}\right\Vert _{L^{1}}%
+\int_{\mathbf{R}}v^{2}\left\vert f_{\delta_{1}}-f_{\delta_{1},\delta_{2}%
}\right\vert \ dv+\left\Vert f_{\delta_{1}}-f_{\delta_{1},\delta_{2}%
}\right\Vert _{W^{1,p}}\rightarrow0.
\]
Next, we show
\[
\left\Vert \partial_{v}\left(  f_{\delta_{1}}-f_{\delta_{1},\delta_{2}%
}\right)  \right\Vert _{W^{s-1,p}\left(  \mathbf{R}\right)  }\rightarrow
0\text{, when }\delta_{2}\rightarrow0\text{.}%
\]
This follows from Lemma \ref{lemma-strichartz} below, since $s-1<\frac{1}{p}$
and
\begin{align*}
&  \left\Vert \frac{f_{\delta_{1}}\left(  v\right)  -f_{\delta_{1}}\left(
-v\right)  }{2v}\right\Vert _{W^{s-1,p}\left(  \mathbf{R}\right)  }\\
&  =\left\Vert \int_{0}^{1}f_{\delta_{1}}^{\prime}\left(  \left(
2\tau-1\right)  v\right)  d\tau\right\Vert _{W^{s-1,p}\left(  \mathbf{R}%
\right)  }\leq\int_{0}^{1}\left\Vert f_{\delta_{1}}^{\prime}\left(  \left(
2\tau-1\right)  v\right)  \right\Vert _{W^{s-1,p}\left(  \mathbf{R}\right)
}d\tau\\
&  \leq C\int_{0}^{1}\left(  \left\vert 2\tau-1\right\vert ^{-\frac{1}{p}%
}\left\Vert f_{\delta_{1}}^{\prime}\right\Vert _{L^{p}}+\left\vert
2\tau-1\right\vert ^{-\left(  \frac{1}{p}-s+1\right)  }\left\Vert \left\vert
D\right\vert ^{s-1}f_{\delta_{1}}^{\prime}\right\Vert _{L^{p}}\right)
d\tau\leq C\left\Vert f_{\delta_{1}}\right\Vert _{W^{s,p}\left(
\mathbf{R}\right)  }.
\end{align*}
So when $\delta_{2}\rightarrow0$,
\[
\left\Vert f_{\delta_{1}}-f_{\delta_{1},\delta_{2}}\right\Vert _{L_{v}^{1}%
}+\int_{\mathbf{R}}v^{2}\left\vert f_{\delta_{1}}-f_{\delta_{1},\delta_{2}%
}\right\vert \ dv+\left\Vert f_{\delta_{1}}-f_{\delta_{1},\delta_{2}%
}\right\Vert _{W^{s,p}}\rightarrow0.
\]
Thus by choosing $\delta_{2}$ small enough, (\ref{estimate-delta-2}) is
satisfied. By setting $f_{\varepsilon}=f_{\delta_{1},\delta_{2}}$, the
conclusion of the lemma follows from (\ref{estimate-delta-1}) and
(\ref{estimate-delta-2}).
\end{proof}

\begin{lemma}
\label{lemma-strichartz}Given $f\in W^{\frac{1}{p},p}\left(  \mathbf{R}%
\right)  \cap L^{\infty},\ g\in W^{s,p}\left(  \mathbf{R}\right)  \left(
p>1,0\leq s<\frac{1}{p}\right)  $, then for any $\delta>0,\ f\left(  \frac
{x}{\delta}\right)  g\left(  x\right)  \in W^{s,p}\left(  \mathbf{R}\right)  $
and
\begin{equation}
\left\Vert f\left(  \frac{x}{\delta}\right)  g\right\Vert _{W^{s,p}%
}\rightarrow0\text{, when }\delta\rightarrow0\text{. } \label{zero-lemma}%
\end{equation}

\end{lemma}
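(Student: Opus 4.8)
The plan is to separate the claim into a scale-uniform multiplier bound and a decay estimate on a dense class. Write $f_\delta(x)=f(x/\delta)$ and recall $\|u\|_{W^{s,p}}\simeq\|u\|_{L^p}+\||D|^s u\|_{L^p}$.

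\textbf{Step 1 (uniform product estimate).} I would first prove that there is a constant $C=C(p,s)$ with
\[
\|f_\delta g\|_{W^{s,p}}\le C\big(\|f\|_{L^\infty}+\||D|^{1/p}f\|_{L^p}\big)\|g\|_{W^{s,p}}\qquad(0<\delta\le1),
\]
the point being that the right-hand side is \emph{independent of} $\delta$. The $L^p$ part is immediate from $\|f_\delta g\|_{L^p}\le\|f\|_{L^\infty}\|g\|_{L^p}$. For the homogeneous part I would apply the fractional Leibniz (Kato--Ponce) rule, $\||D|^s(f_\delta g)\|_{L^p}\le C(\||D|^s g\|_{L^p}\|f_\delta\|_{L^\infty}+\|g\|_{L^q}\||D|^s f_\delta\|_{L^{1/s}})$, with the H\"older pairing $\tfrac1p=s+(\tfrac1p-s)$, i.e. $q=p/(1-sp)$. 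The two factors carrying $g$ are controlled by the Sobolev embedding $W^{s,p}(\mathbf R)\hookrightarrow L^{q}$, and the factor $\||D|^s f_\delta\|_{L^{1/s}}$ by the borderline embedding $\dot W^{1/p,p}\hookrightarrow\dot W^{s,1/s}$ (both exponents have differential dimension $0$, and $s<1/p$ gives $p<1/s$). Crucially, $\|f_\delta\|_{L^\infty}=\|f\|_{L^\infty}$ and $\||D|^{1/p}f_\delta\|_{L^p}=\||D|^{1/p}f\|_{L^p}$ are scale invariant, which yields the $\delta$-uniform bound; this is exactly the multiplier estimate underlying the name of the lemma.

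\textbf{Step 2 (reduction and decay).} Using density of $C_c^\infty$ in $W^{s,p}$, pick $g_n\in C_c^\infty$ with $\|g-g_n\|_{W^{s,p}}\to0$ and split $\|f_\delta g\|_{W^{s,p}}\le C M\|g-g_n\|_{W^{s,p}}+\|f_\delta g_n\|_{W^{s,p}}$, where $M=\|f\|_{L^\infty}+\||D|^{1/p}f\|_{L^p}$ and the first term uses Step 1. It then suffices to show $\|f_\delta g_n\|_{W^{s,p}}\to0$ for a fixed bounded, compactly supported $g_n$; sending $n\to\infty$ afterwards finishes the proof. Here the key mechanism is the $L^p$-concentration of $f_\delta$: for any $R$, $\int_{|x|\le R}|f_\delta|^p\,dx=\delta\int_{|y|\le R/\delta}|f|^p\,dy\le\delta\|f\|_{L^p}^p\to0$. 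This gives $\|f_\delta g_n\|_{L^p}\to0$ directly. For the Gagliardo seminorm I would use $f_\delta g_n(x)-f_\delta g_n(y)=f_\delta(x)(g_n(x)-g_n(y))+g_n(y)(f_\delta(x)-f_\delta(y))$: the first term integrates to $\int|f_\delta(x)|^p\Phi_n(x)\,dx$ with $\Phi_n(x)=\int|g_n(x)-g_n(y)|^p|x-y|^{-1-sp}\,dy\in L^1$, which tends to $0$ again by concentration against an $L^1$ weight; the second is bounded by $\|g_n\|_{L^\infty}^p[f_\delta]_{W^{s,p}}^p=\|g_n\|_{L^\infty}^p\,\delta^{1-sp}[f]_{W^{s,p}}^p\to0$, since $sp<1$ and $[f]_{W^{s,p}}<\infty$ because $W^{1/p,p}\hookrightarrow W^{s,p}$.

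I expect the main obstacle to be Step 1, specifically the term in which the fractional derivative lands on $f_\delta$ and must be paired against $g$ (which, since $sp<1$, need not be bounded): this forces the use of the precise H\"older exponent $q=p/(1-sp)$ together with the two borderline Sobolev embeddings, and it is exactly there that the scale-invariant quantity $\||D|^{1/p}f\|_{L^p}$ (rather than a $\delta$-dependent norm) must appear for the estimate to be uniform in $\delta$. Once the $\delta$-uniform bound is in hand, the decay in Step 2 is soft, coming only from $L^p$-concentration and the favorable power $\delta^{1-sp}$. (The degenerate case $s=0$ is just $W^{0,p}=L^p$ and follows directly from the concentration estimate and density.)
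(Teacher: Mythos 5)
Your proof is correct and follows the same overall architecture as the paper's: a $\delta$-uniform product (multiplier) estimate, reduction by density to $g_1\in C_0^\infty$, and a scaling argument showing the remaining term vanishes. The differences are in how the two ingredients are obtained. For the multiplier bound the paper simply cites Strichartz's theorem, $\left\Vert h_1h_2\right\Vert_{W^{s,p}}\leq C\left(\left\Vert h_1\right\Vert_{W^{\frac{1}{p},p}},\left\Vert h_1\right\Vert_{L^{\infty}}\right)\left\Vert h_2\right\Vert_{W^{s,p}}$, and observes that $\left\Vert f(\cdot/\delta)\right\Vert_{W^{\frac{1}{p},p}}\leq C\left\Vert f\right\Vert_{W^{\frac{1}{p},p}}$ for $\delta\leq1$ by scaling; you instead rederive the bound from the fractional Leibniz rule plus the two borderline embeddings $W^{s,p}\hookrightarrow L^{p/(1-sp)}$ and $\dot{W}^{\frac{1}{p},p}\hookrightarrow\dot{W}^{s,\frac{1}{s}}$, which is more work but self-contained, and both exponent choices check out. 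For the decay of $\left\Vert f_\delta g_1\right\Vert_{W^{s,p}}$ the paper is slicker: it applies the same multiplier theorem with the roles of the two factors swapped (now $g_1\in C_0^{\infty}\subset W^{\frac{1}{p},p}\cap L^{\infty}$ is the multiplier) and uses $\left\Vert f_\delta\right\Vert_{W^{s,p}}\leq\delta^{\frac{1}{p}}\left\Vert f\right\Vert_{L^{p}}+\delta^{\frac{1}{p}-s}\left\Vert \left\vert D\right\vert^{\frac{1}{p}}f\right\Vert_{L^{p}}\rightarrow0$; your hands-on splitting of the Gagliardo double integral reaches the same conclusion with the same rate $\delta^{(1-sp)/p}$ (and your unstated but needed fact that $\Phi_n\in L^{1}\cap L^{\infty}$ does hold for $g_n\in C_c^{\infty}$ and $0<s<1/p$). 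One point to tidy: you invoke the Fourier characterization $\left\Vert u\right\Vert_{W^{s,p}}\simeq\left\Vert u\right\Vert_{L^{p}}+\left\Vert \left\vert D\right\vert^{s}u\right\Vert_{L^{p}}$ in Step 1 but switch to the Gagliardo seminorm in Step 2; for $p\neq2$ these define different (Triebel--Lizorkin versus Besov) scales, so you should fix one characterization throughout. Either version of the argument goes through in either scale, so this is a cosmetic inconsistency rather than a gap.
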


\begin{proof}
First, we cite a result of Strichartz (\cite{strichartz}): Given $h_{1}\in
W^{\frac{1}{p},p}\left(  \mathbf{R}\right)  \cap L^{\infty},\ h_{2}\in
W^{s,p}\left(  \mathbf{R}\right)  ,\ $then $h_{1}h_{2}\in W^{s,p}\left(
\mathbf{R}\right)  $ and
\[
\left\Vert h_{1}h_{2}\right\Vert _{W^{s,p}}\leq C\left(  \left\Vert
h_{1}\right\Vert _{W^{\frac{1}{p},p}},\left\Vert h_{1}\right\Vert _{L^{\infty
}}\right)  \left\Vert h_{2}\right\Vert _{W^{s,p}}\text{.}%
\]
Above result immediately implies that $f\left(  \frac{x}{\delta}\right)
g\left(  x\right)  \in W^{s,p}\left(  \mathbf{R}\right)  $. To show
(\ref{zero-lemma}), for any $\varepsilon>0,\ $we pick $g_{1}\in C_{0}^{\infty
}\left(  \mathbf{R}\right)  $ such that $\left\Vert g-g_{1}\right\Vert
_{W^{s,p}}<\varepsilon$. Since
\[
\left\Vert f\left(  \frac{x}{\delta}\right)  \right\Vert _{L^{p}}+\left\Vert
\left\vert D\right\vert ^{\frac{1}{p}}\left(  f\left(  \frac{x}{\delta
}\right)  \right)  \right\Vert _{L^{p}}=\delta^{\frac{1}{p}}\left\Vert
f\left(  x\right)  \right\Vert _{L^{p}}+\left\Vert \left\vert D\right\vert
^{\frac{1}{p}}f\right\Vert _{L^{p}},
\]
so when $\delta\leq1$,
\[
\left\Vert f\left(  \frac{x}{\delta}\right)  \right\Vert _{W^{\frac{1}{p},p}%
}\leq C\left\Vert f\right\Vert _{W^{\frac{1}{p},p}},\ \text{for some }C\text{
independent of }\delta\text{.}%
\]
Thus
\begin{align*}
&  \ \ \ \ \ \left\Vert f\left(  \frac{x}{\delta}\right)  g\right\Vert
_{W^{s,p}}\\
&  \leq\left\Vert f\left(  \frac{x}{\delta}\right)  \left(  g-g_{1}\right)
\right\Vert _{W^{s,p}}+\left\Vert f\left(  \frac{x}{\delta}\right)
g_{1}\right\Vert _{W^{s,p}}\\
&  \leq C\left(  \left\Vert f\right\Vert _{W^{\frac{1}{p},p}},\left\Vert
f\right\Vert _{L^{\infty}}\right)  \left\Vert g-g_{1}\right\Vert _{W^{s,p}%
}+\left\Vert f\left(  \frac{x}{\delta}\right)  \right\Vert _{W^{s,p}}C\left(
\left\Vert g_{1}\right\Vert _{W^{\frac{1}{p},p}},\left\Vert g_{1}\right\Vert
_{L^{\infty}}\right) \\
&  \leq C\left(  \left\Vert f\right\Vert _{W^{\frac{1}{p},p}},\left\Vert
f\right\Vert _{L^{\infty}}\right)  \varepsilon+\left(  \delta^{\frac{1}{p}%
}\left\Vert f\left(  x\right)  \right\Vert _{L^{p}}+\delta^{\frac{1}{p}%
-s}\left\Vert \left\vert D\right\vert ^{\frac{1}{p}}f\right\Vert _{L^{p}%
}\right)  C\left(  \left\Vert g_{1}\right\Vert _{W^{\frac{1}{p},p}},\left\Vert
g_{1}\right\Vert _{L^{\infty}}\right)  .
\end{align*}
Letting $\delta\rightarrow0$, we get
\[
\lim_{\delta\rightarrow0}\left\Vert f\left(  \frac{x}{\delta}\right)
g\right\Vert _{W^{s,p}}\leq C\left(  \left\Vert f\right\Vert _{W^{\frac{1}%
{p},p}},\left\Vert f\right\Vert _{L^{\infty}}\right)  \varepsilon.
\]
Since $\varepsilon$ is arbitrarily small, (\ref{zero-lemma}) is proved$.$
\end{proof}

\begin{proof}
[Proof of Theorem \ref{thm-existence}]Fixed the period $T>0$ and the travel
speed $c\in\mathbf{R}.$Then by Lemma \ref{lemma-approxi}, for any
$\varepsilon$ small enough, there exists $f_{1}\left(  v\right)  \in
C^{\infty}\left(  \mathbf{R}\right)  \cap W^{2,p}\left(  \mathbf{R}\right)  $,
such that $f_{1}\left(  v\right)  $ is even near$\ v=c$ and
\[
\left\Vert f_{1}-f_{0}\right\Vert _{L^{1}\left(  \mathbf{R}\right)  }%
+T\int_{\mathbf{R}}v^{2}\left\vert f_{1}-f_{0}\right\vert \ dv+\left\Vert
f_{1}-f_{0}\right\Vert _{W^{s,p}\left(  \mathbf{R}\right)  }\leq
\varepsilon/2.
\]
Our goal is to construct travelling BGK wave solutions of the form
\[
\left(  f_{\varepsilon}\left(  x-ct,v\right)  ,E_{\varepsilon}\left(
x-ct\right)  \right)
\]
$\ $near $\left(  f_{1}\left(  v\right)  ,0\right)  $, such that
\[
\left\Vert f_{\varepsilon}\left(  x,v\right)  -f_{1}\left(  v\right)
\right\Vert _{L_{x,v}^{1}}+\int_{\mathbf{R}}v^{2}\left\vert f_{\varepsilon
}\left(  x,v\right)  -f_{1}\left(  v\right)  \right\vert \ dxdv+\left\Vert
f_{\varepsilon}\left(  x,v\right)  -f_{1}\left(  v\right)  \right\Vert
_{W_{x,v}^{s,p}}<\frac{\varepsilon}{2}.
\]
It is equivalent to find steady BGK solutions $\left(  f_{\varepsilon}\left(
x,v+c\right)  ,E_{\varepsilon}\left(  x\right)  \right)  $ near $\left(
f_{1}\left(  v+c\right)  ,0\right)  $. $\ $By Proposition
\ref{propo-existence}, there exists steady BGK solution $\left(  f_{2}\left(
x,v\right)  ,E_{2}\left(  x\right)  \right)  $ near $\left(  f_{1}\left(
v+c\right)  ,0\right)  $ such that$\ E_{2}\left(  x\right)  $ not identically
$0,$
\[
\left\Vert f_{2}\left(  x,v\right)  -f_{1}\left(  v+c\right)  \right\Vert
_{L_{x,v}^{1}}+\int_{\mathbf{R}}v^{2}\left\vert f_{2}\left(  x,v\right)
-f_{1}\left(  v+c\right)  \right\vert \ dxdv
\]%
\[
+\left\Vert f_{2}\left(  x,v\right)  -f_{1}\left(  v+c\right)  \right\Vert
_{W_{x,v}^{s,p}}<\frac{\varepsilon}{2\left(  5+4c^{2}\right)  }.
\]
Setting
\[
f_{\varepsilon}\left(  x,v\right)  =f_{2}\left(  x,v-c\right)
,\ E_{\varepsilon}\left(  x\right)  =E_{2}\left(  x\right)  ,
\]
then $\left(  f_{\varepsilon}\left(  x-ct,v\right)  ,E_{\varepsilon}\left(
x-ct\right)  \right)  $ is a travelling BGK solution and
\[
\left\Vert f_{\varepsilon}-f_{1}\left(  v\right)  \right\Vert _{L_{x,v}^{1}%
}+\int_{\mathbf{R}}\left(  v-c\right)  ^{2}\left\vert f_{\varepsilon}%
-f_{1}\left(  v\right)  \right\vert \ dxdv
\]%
\[
+\left\Vert f_{\varepsilon}-f_{1}\left(  v\right)  \right\Vert _{W_{x,v}%
^{s,p}}<\frac{\varepsilon}{2\left(  5+4c^{2}\right)  }.
\]
Since$\ \left\vert v-c\right\vert \geq\left\vert v\right\vert /2\ $when
$\left\vert v\right\vert \geq2\left\vert c\right\vert ,$so$\ $%
\begin{align*}
&  \ \ \ \ \int_{\mathbf{R}}v^{2}\left\vert f_{\varepsilon}\left(  x,v\right)
-f_{1}\left(  v\right)  \right\vert \ dxdv\\
&  \leq\int_{\left\vert v\right\vert \geq2\left\vert c\right\vert }%
v^{2}\left\vert f_{\varepsilon}\left(  x,v\right)  -f_{1}\left(  v\right)
\right\vert \ dxdv+\int_{\left\vert v\right\vert \leq2\left\vert c\right\vert
}v^{2}\left\vert f_{\varepsilon}\left(  x,v\right)  -f_{1}\left(  v\right)
\right\vert \ dxdv\\
&  \leq4\int\left(  v-c\right)  ^{2}\left\vert f_{\varepsilon}\left(
x,v\right)  -f_{1}\left(  v\right)  \right\vert \ dxdv+4c^{2}\left\Vert
f_{\varepsilon}-f_{1}\right\Vert _{L_{x,v}^{1}}\\
&  <\frac{\left(  4+4c^{2}\right)  \varepsilon}{2\left(  5+4c^{2}\right)  },
\end{align*}
and thus
\begin{align*}
\ \ \  &  \left\Vert f_{\varepsilon}-f_{1}\left(  v\right)  \right\Vert
_{L_{x,v}^{1}}+\int_{\mathbf{R}}v^{2}\left\vert f_{\varepsilon}\left(
x,v\right)  -f_{1}\left(  v\right)  \right\vert \ dxdv+\left\Vert
f_{\varepsilon}\left(  x,v\right)  -f_{1}\left(  v\right)  \right\Vert
_{W_{x,v}^{s,p}}\\
&  <\frac{\varepsilon}{2\left(  5+4c^{2}\right)  }+\frac{\left(
4+4c^{2}\right)  \varepsilon}{2\left(  5+4c^{2}\right)  }=\frac{\varepsilon
}{2},
\end{align*}
So
\[
\left\Vert f_{\varepsilon}-f_{0}\left(  v\right)  \right\Vert _{L_{x,v}^{1}%
}+\int_{\mathbf{R}}v^{2}\left\vert f_{\varepsilon}\left(  x,v\right)
-f_{0}\left(  v\right)  \right\vert \ dxdv+\left\Vert f_{\varepsilon}\left(
x,v\right)  -f_{0}\left(  v\right)  \right\Vert _{W_{x,v}^{s,p}}<\varepsilon.
\]
and the proof of Theorem \ref{thm-existence} is finished.
\end{proof}

\begin{remark}
\label{rmk-double-period}For steady BGK waves $\left(  f\left(  x,v\right)
,E\left(  x\right)  \right)  $ of the form $E\left(  x\right)  =-\beta_{x}$
and
\begin{equation}
f\left(  x,v\right)  =\left\{
\begin{array}
[c]{cc}%
\mu^{+}\left(  e\right)  & \text{if }v\geq0\\
\mu^{-}\left(  e\right)  & \text{if\ \thinspace}v<0
\end{array}
\right\}  ,\ \text{ }e=\frac{1}{2}v^{2}-\beta\left(  x\right)  ,
\label{form-f}%
\end{equation}
with $\mu^{+},\mu^{-}\in C^{1}\left(  \mathbf{R}\right)  ,$ such as
constructed in the proof of Theorem \ref{thm-existence}, $E\left(  x\right)  $
has only two zeros in one minimal period. This is because the electric
potential $\beta$ satisfying the 2nd order autonomous ODE
\begin{equation}
\beta_{xx}=\int_{v\geq0}\mu^{+}\left(  \frac{1}{2}v^{2}-\beta\right)
\ dv+\int_{v<0}\mu^{-}\left(  \frac{1}{2}v^{2}-\beta\right)  \ dv-1=h\left(
\beta\right)  \label{ode-beta-remark}%
\end{equation}
with $h\in C^{1}\left(  \mathbf{R}\right)  .$ Any periodic solution of minimal
period to the ODE (\ref{ode-beta-remark}) has only one minimum and maximum,
and therefore $E=-\beta_{x}$ vanishes at only two points. By Theorem
\ref{thm-existence}, for $T>0$, near any homogeneous equilibria we can
construct small BGK waves such that multiple of its minimal period equal $T$.
By \cite{lin01} and \cite{lin-cpam}, any of such multi-BGK waves are linearly
and nonlinearly unstable under perturbations of period $T$. So far, the
existence of stable BGK wave of minimal period remains open, although some
numerical evidences suggest the existence of such stable BGK wave. For
example, in \cite{bertrand-et-1988} starting near a unstable multi-BGK wave,
numerical simulations shows that the long time asymptotics is to tend to a
seemingly stable BGK wave of minimal period.
\end{remark}

\begin{remark}
In (\cite{dorning-holloway} \cite{dorning-holloway2}), Dorning and Holloway
(see also \cite{buchanan-dorning95}, \cite{demeio-holloway}) studied the
bifurcation of small travelling BGK waves with speed $v_{p}\ $near homogeneous
equilibria $\left(  f_{0}\left(  v\right)  ,0\right)  \ $under the bifurcation
condition
\begin{equation}
\nu\left(  v_{p}\right)  =P\int\frac{f_{0}^{\prime}\left(  v\right)  }%
{v-v_{p}}dv>0, \label{bifurcation-holloway}%
\end{equation}
where $P$ denotes the principal value integral. It is equivalent to find
steady BGK waves near $\left(  f_{0}\left(  v+v_{p}\right)  ,0\right)  $. The
approach in (\cite{dorning-holloway} \cite{dorning-holloway2}) is as follows.
Define
\begin{align*}
f^{e,v_{p}}\left(  v\right)   &  =\frac{1}{2}\left(  f_{0}\left(
v+v_{p}\right)  +f_{0}\left(  -v+v_{p}\right)  \right)  ,\\
\ f^{o,v_{p}}\left(  v\right)   &  =\frac{1}{2}\left(  f_{0}\left(
v+v_{p}\right)  -f_{0}\left(  -v+v_{p}\right)  \right)  \ .
\end{align*}
Then
\[
\int\frac{\frac{d}{dv}f^{e,v_{p}}\left(  v\right)  }{v}dv=P\int\frac
{f_{0}^{\prime}\left(  v\right)  }{v-v_{p}}dv=\nu\left(  v_{p}\right)  >0.
\]
So by the bifurcation theory, there exist small BGK waves $\left(
f^{e}\left(  x,v\right)  ,-\beta_{x}\right)  \ $near $\left(  f^{e,v_{p}%
}\left(  v\right)  ,0\right)  $ with periods close to$\frac{2\pi}{\sqrt
{\nu\left(  v_{p}\right)  }}$, and $f^{e}\left(  x,v\right)  $ is even in $v$.
Next, the odd part $f^{o,v_{p}}\left(  x,v\right)  $ is defined by
\[
f^{o,v_{p}}\left(  x,v\right)  =\left(  1-\sigma\left(  \frac{e}{-2\min\beta
}\right)  \right)  \left\{
\begin{array}
[c]{cc}%
G^{o}\left(  e\right)  & \text{if }v\geq0\\
-G^{o}\left(  e\right)  & if\ v<0
\end{array}
\right.  ,
\]
where $\sigma\left(  x\right)  $ is the cut-off function as defined in
(\ref{cut-off}) and $G^{o}\left(  e\right)  =\ f^{o,v_{p}}\left(  \sqrt
{2e}\right)  $ when $e>0$. Define
\[
f\left(  x,v\right)  =f^{e,v_{p}}\left(  x,v\right)  +f^{o,v_{p}}\left(
x,v\right)  ,
\]
then $\left(  f\left(  x,v\right)  ,-\beta_{x}\right)  $ is a steady BGK wave,
since for trapped particles with $e<-\min\beta$, $f^{o,v_{p}}\left(
x,v\right)  =0$ and $f\left(  x,v\right)  $ only depends on $e$. It can be
shown that $\left(  f\left(  x,v\right)  ,-\beta_{x}\right)  \ $is close to
$\left(  f_{0}\left(  v+v_{p}\right)  ,0\right)  $ in $L_{x,v}^{p}$ norm. The
periods of the BGK waves constructed above are only near $\frac{2\pi}%
{\sqrt{\nu\left(  v_{p}\right)  }}$. In \cite{dorning-holloway}
\cite{dorning-holloway2}, \cite{demeio-holloway}, it was suggested that BGK
waves with exact period$\frac{2\pi}{\sqrt{\nu\left(  v_{p}\right)  }}\ $and
$\varepsilon-$close to $\left(  f_{0}\left(  v+v_{p}\right)  ,0\right)  $ in
$L_{x,v}^{p}$ norm can be constructed by performing above bifurcation from
$\left(  \left(  1+\mu\left(  \varepsilon\right)  \right)  f_{0}\left(
v+v_{p}\right)  ,0\right)  $ for proper small parameter $\mu$. It should be
pointed out that this strategy actually does not work to get exact period
$\frac{2\pi}{\sqrt{\nu\left(  v_{p}\right)  }}$. Since to ensure that $\left(
\left(  1+\mu\left(  \varepsilon\right)  \right)  f^{e,v_{p}}\left(  v\right)
,0\right)  $ is a bifurcation point, it is required that
\[
\int_{\mathbf{R}}\left(  1+\mu\left(  \varepsilon\right)  \right)  f^{e,v_{p}%
}\left(  v\right)  dv=1,
\]
and thus $\mu\left(  \varepsilon\right)  =0$, i,e, $\mu$ is not adjustable at all.

Second, by Lemma \ref{lemma-hardy} below,
\[
\left\vert \nu\left(  v_{p}\right)  \right\vert =\left\vert \int\frac{\frac
{d}{dv}f^{e,v_{p}}\left(  v\right)  }{v}dv\right\vert \leq\left\Vert
f^{e,v_{p}}\left(  v\right)  \right\Vert _{W^{2,p}}=\left\Vert f_{0}\left(
v\right)  \right\Vert _{W^{2,p}}\text{.}%
\]
So by the method in \cite{dorning-holloway} \cite{dorning-holloway2}, one can
not get small BGK waves with spatial periods less than $2\pi/\sqrt{\left\Vert
f_{0}\left(  v\right)  \right\Vert _{W^{2,p}}}$. By comparison, we construct
\ BGK waves with any minimal period near any homogeneous equilibrium $\left(
f_{0}\left(  v\right)  ,0\right)  $ in any $W^{s,p}$ $\left(  s<1+\frac{1}%
{p}\right)  \ $neighborhood.

It is also claimed in \cite{dorning-holloway} \cite{dorning-holloway2} that
for $v_{p}$ such that $\nu\left(  v_{p}\right)  <0$, there exist no travelling
BGK waves with travel speed $v_{p}$, arbitrarily near $\left(  f_{0}\left(
v\right)  ,0\right)  $. For Maxwellian $f_{0}\left(  v\right)  =e^{-\frac
{1}{2}v^{2}}$, the critical speed is about $v_{p}=1.35$ since $\nu\left(
v_{p}\right)  <0$ when $v_{p}<1.35.$ However, by our Theorem
\ref{thm-existence}, BGK waves with arbitrary travel speed exist near (in
$W^{s,p}$ space,$\ s<1+\frac{1}{p}$) any homogeneous equilibrium including
Maxwellian. So the claim of the critical travel speed based on
(\ref{bifurcation-holloway}) is not true.
\end{remark}

\begin{proof}
[Proof of Corollary \ref{cor-unsatble}]From the proof of Theorem
\ref{thm-existence} and Proposition \ref{propo-existence}, it follows that:
Fixed $T>0,\ $for any $\varepsilon>0$, there exists a homogeneous profile
$f_{\varepsilon}\left(  v\right)  \in C^{\infty}\left(  \mathbf{R}\right)
\cap W^{2,p}\left(  \mathbf{R}\right)  ,\ $such that$\ f_{\varepsilon}\left(
v\right)  \geq0,\int_{\mathbf{R}}f_{\varepsilon}\left(  v\right)  \ dv=1,$
\[
\ \left(  \frac{2\pi}{T}\right)  ^{2}=k_{0}^{2}=\int_{\mathbf{R}}%
\frac{f_{\varepsilon}^{\prime}\left(  v\right)  }{v-v_{\varepsilon}}dv\text{
\ with \ }f_{\varepsilon}^{\prime}\left(  v_{\varepsilon}\right)  =0,
\]
and
\begin{equation}
\left\Vert f_{\varepsilon}\left(  v\right)  -f_{0}\left(  v\right)
\right\Vert _{L^{1}\left(  \mathbf{R}\right)  }+\ T\int_{\mathbf{R}}%
v^{2}\left\vert f_{\varepsilon}\left(  v\right)  -\ f_{0}\left(  v\right)
\right\vert dv+\left\Vert f_{\varepsilon}\left(  v\right)  -f_{0}\left(
v\right)  \right\Vert _{W^{s,p}\left(  \mathbf{R}\right)  }<\frac{\varepsilon
}{2}. \label{ineqn1}%
\end{equation}
Define $f_{\delta}\left(  v\right)  \in C^{\infty}\left(  \mathbf{R}\right)
\cap W^{2,p}\left(  \mathbf{R}\right)  \ $by%
\[
f_{\delta}\left(  v\right)  =\frac{1}{\delta}f_{\varepsilon}\left(
v_{\varepsilon}+\frac{v-v_{\varepsilon}}{\delta}\right)  ,
\]
Then$\ f_{\delta}\left(  v\right)  \geq0,\ \int_{\mathbf{R}}f_{\delta}\left(
v\right)  \ dv=1$ and
\[
k_{0}\left(  \delta\right)  ^{2}=\int\frac{f_{\delta}^{\prime}\left(
v\right)  }{v-v_{\varepsilon}}dv=\frac{1}{\delta^{2}}\left(  \frac{2\pi}%
{T}\right)  ^{2}.
\]
We consider two cases below.

Case 1: $f_{\varepsilon}^{\prime\prime}\left(  v_{\varepsilon}\right)  >0$. By
Lemma \ref{lemma-penrose} and Remark \ref{rmk-penrose} thereafter,$\ $there
exist unstable modes of the linearized VP equation around $\left(
f_{\varepsilon}\left(  v\right)  ,0\right)  ,\ $for wave numbers $k$ in the
internal $\left(  k_{1},k_{0}\right)  $. Here $k_{1}$ is defined by
\[
\ k_{1}^{2}=\int_{\mathbf{R}}\frac{f_{\varepsilon}^{\prime}\left(  v\right)
}{v-c_{1}}dv,
\]
and $c_{1}\ $is a maximum point $f_{\varepsilon}\left(  v\right)  $. If there
is no maximum point $c_{1}\ $of $f_{\varepsilon}$ such that
\[
\int_{\mathbf{R}}\frac{f_{\varepsilon}^{\prime}\left(  v\right)  }{v-c_{1}%
}dv<k_{0}^{2},
\]
$\ $then $k_{1}=0$. Choose $\delta<1$ such that
\begin{equation}
\left\Vert f_{\varepsilon}\left(  v\right)  -f_{\delta}\left(  v\right)
\right\Vert _{L^{1}\left(  \mathbf{R}\right)  }+\ T\int_{\mathbf{R}}%
v^{2}\left\vert f_{\varepsilon}-\ f_{\delta}\right\vert dv+\left\Vert
f_{\varepsilon}-f_{\delta}\right\Vert _{W^{s,p}\left(  \mathbf{R}\right)
}<\frac{\varepsilon}{2}. \label{ineqn2}%
\end{equation}
Then again by Lemma \ref{lemma-penrose} and Remark \ref{rmk-penrose}, there
exist unstable modes of the linearized VP equation around $\left(  f_{\delta
}\left(  v\right)  ,0\right)  ,\ $for wave numbers $k$ in the internal
$\left(  k_{1}\left(  \delta\right)  ,k_{0}\left(  \delta\right)  \right)  $.
Since $k_{0}\left(  \delta\right)  >k_{0}$ and $k_{0}\left(  \delta\right)
-k_{1}\left(  \delta\right)  \rightarrow k_{0}-k_{1}>0$ when $\delta
\rightarrow1-$, we have $k_{0}\in\left(  k_{1}\left(  \delta\right)
,k_{0}\left(  \delta\right)  \right)  \ $when $\delta$ is close enough to $1$.
This implies that $\left(  f_{\delta}\left(  v\right)  ,0\right)  \ $is
linearly unstable under perturbations of period $T$. Moreover, the
inequalities (\ref{ineqn1}) and (\ref{ineqn2}) imply that
\[
\left\Vert f_{\delta}\left(  v\right)  -f_{0}\left(  v\right)  \right\Vert
_{L^{1}\left(  \mathbf{R}\right)  }+\ T\int_{\mathbf{R}}v^{2}\left\vert
f_{\delta}\left(  v\right)  -\ f_{0}\left(  v\right)  \right\vert
dv+\left\Vert f_{\delta}\left(  v\right)  -f_{0}\left(  v\right)  \right\Vert
_{W^{s,p}\left(  \mathbf{R}\right)  }<\varepsilon.
\]

Case 2: $f_{\varepsilon}^{\prime\prime}\left(  v_{\varepsilon}\right)  <0$.
Choose $\delta>1$ sufficiently close to $1$, then by the same argument as in
Case 1, $\left(  f_{\delta}\left(  v\right)  ,0\right)  \ $is linearly
unstable under perturbations of period $T$ and
\[
\left\Vert f_{\delta}\left(  v\right)  -f_{0}\left(  v\right)  \right\Vert
_{L^{1}\left(  \mathbf{R}\right)  }+\ T\int_{\mathbf{R}}v^{2}\left\vert
f_{\delta}\left(  v\right)  -\ f_{0}\left(  v\right)  \right\vert
dv+\left\Vert f_{\delta}\left(  v\right)  -f_{0}\left(  v\right)  \right\Vert
_{W^{s,p}\left(  \mathbf{R}\right)  }<\varepsilon.
\]
This finishes the proof of Corollary \ref{cor-unsatble}.
\end{proof}

\section{Nonexistence of BGK waves in $W^{s,p}\left(  s>1+\frac{1}{p}\right)
$}

In this Section, we prove Theorem \ref{thm-non-existence}. The next lemma is a
Hardy type inequality.

\begin{lemma}
\label{lemma-hardy} If $u\left(  v\right)  \in W^{s,p}\left(  \mathbf{R}%
\right)  $ $\left(  p>1,s>\frac{1}{p}\right)  ,\ $and $u\left(  0\right)
=0,\ $then%
\[
\int_{\mathbf{R}}\left\vert \frac{u\left(  v\right)  }{v}\right\vert dv\leq
C\left\Vert u\right\Vert _{W^{s,p}\left(  \mathbf{R}\right)  },
\]
for some constant $C$.
\end{lemma}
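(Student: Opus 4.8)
The plan is to prove the Hardy-type inequality
\[
\int_{\mathbf{R}}\left|\frac{u(v)}{v}\right|\,dv\leq C\|u\|_{W^{s,p}(\mathbf{R})},\qquad p>1,\ s>\tfrac1p,\ u(0)=0,
\]
by splitting the integral into the region near the origin, where the vanishing condition $u(0)=0$ together with the fractional smoothness $s>1/p$ controls the singularity of $1/v$, and the region away from the origin, where $1/v$ is bounded and the estimate is essentially just $L^1_{\mathrm{loc}}$ control coming from $W^{s,p}\hookrightarrow L^{p}$. Write $\int_{\mathbf{R}}=\int_{|v|\le 1}+\int_{|v|\ge 1}$ and treat the two pieces separately.

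\textbf{The far piece.}
For $|v|\ge 1$ I would simply estimate, by H\"older's inequality with the conjugate exponent $p'=p/(p-1)$,
\[
\int_{|v|\ge 1}\left|\frac{u(v)}{v}\right|\,dv\le \left(\int_{|v|\ge 1}\frac{dv}{|v|^{p'}}\right)^{1/p'}\|u\|_{L^p}=\left(\frac{2}{p'-1}\right)^{1/p'}\|u\|_{L^p},
\]
which is finite because $p'>1$, and $\|u\|_{L^p}\le \|u\|_{W^{s,p}}$ by the embedding of $W^{s,p}$ into $L^p$. This is the routine half, and it is exactly the kind of computation already carried out in the proof of Proposition \ref{propo-existence}.

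\textbf{The near piece (the crux).}
The main obstacle is the region $|v|\le 1$, where $1/v$ is genuinely singular and one must exploit both $u(0)=0$ and the fractional regularity $s>1/p$. The key point is that $W^{s,p}(\mathbf{R})$ with $s>1/p$ embeds into a H\"older space $C^{0,\alpha}$ with $\alpha=s-1/p>0$ (by the fractional Sobolev embedding / Morrey-type inequality for $sp>1$), so that in particular $u$ has a continuous representative and
\[
|u(v)|=|u(v)-u(0)|\le C\|u\|_{W^{s,p}}\,|v|^{\,s-\frac1p}.
\]
Then
\[
\int_{|v|\le 1}\left|\frac{u(v)}{v}\right|\,dv\le C\|u\|_{W^{s,p}}\int_{|v|\le 1}|v|^{\,s-\frac1p-1}\,dv,
\]
and the remaining integral converges precisely because the exponent $s-\frac1p-1>-1$, i.e. because $s>\frac1p$. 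The only subtlety to verify is the embedding constant in $|u(v)-u(0)|\le C\|u\|_{W^{s,p}}|v|^{s-1/p}$; one can either invoke the standard Gagliardo seminorm characterization of $W^{s,p}$ and the inequality $|u(v)-u(w)|^p\le C|v-w|^{sp-1}[u]_{W^{s,p}}^p$ obtained by averaging the difference quotient, or cite the Sobolev embedding theorem directly. Combining the near and far estimates and relabeling the constant $C$ gives the claim.
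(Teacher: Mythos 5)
Your proposal is correct and follows essentially the same route as the paper: split at $|v|=1$, use H\"older's inequality with exponent $p'$ on the far piece, and use the embedding $W^{s,p}\hookrightarrow C^{0,\alpha}$ together with $u(0)=0$ to tame the singularity on the near piece. The only cosmetic difference is that the paper takes $\alpha\in(0,s-\frac1p)$ strictly below the endpoint, which sidesteps any worry about the borderline H\"older exponent; either choice makes the integral $\int_{-1}^{1}|v|^{-1+\alpha}\,dv$ converge.
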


\begin{proof}
Since$\ s>\frac{1}{p}$, the space $W^{s,p}\left(  \mathbf{R}\right)  $ is
embedded to the H\"{o}lder space $C^{0,\alpha}$ with $\alpha\in\left(
0,s-\frac{1}{p}\right)  $. So
\[
\left\vert u\left(  v\right)  \right\vert =\left\vert u\left(  v\right)
-u\left(  0\right)  \right\vert \leq\left\vert v\right\vert ^{\alpha
}\left\Vert u\right\Vert _{C^{0,\alpha}}\leq C\left\Vert u\right\Vert
_{W^{s,p}}\left\vert v\right\vert ^{\alpha}%
\]
and thus%
\begin{align*}
\int_{\mathbf{R}}\left\vert \frac{u\left(  v\right)  }{v}\right\vert dv  &
\leq\int_{-1}^{1}\left\vert \frac{u\left(  v\right)  }{v}\right\vert
dv+\int_{\left\vert v\right\vert \geq1}\left\vert \frac{u\left(  v\right)
}{v}\right\vert dv\\
&  \leq C\left\Vert u\right\Vert _{W^{s,p}}\int_{-1}^{1}\left\vert
v\right\vert ^{-1+\alpha}dv+\left(  \int_{\left\vert v\right\vert \geq1}%
\frac{1}{\left\vert v\right\vert ^{p^{\prime}}}dv\right)  ^{\frac{1}%
{p^{\prime}}}\left\Vert u\right\Vert _{L^{p}}\\
&  \leq C\left\Vert u\right\Vert _{W^{s,p}\left(  \mathbf{R}\right)  }\text{.}%
\end{align*}

\end{proof}

\begin{proof}
[Proof of Theorem \ref{thm-non-existence}]Suppose otherwise, then there exist
a sequence $\varepsilon_{n}\rightarrow0$, and nontrivial travelling wave
solutions
\[
\left(  f_{n}\left(  x-c_{n}t,v\right)  ,E_{n}\left(  x-c_{n}t\right)
\right)
\]
$\ $to (\ref{vpe}) such that $E_{n}\left(  x\right)  $ is not identically
zero$,\ \int_{0}^{T}E_{n}\left(  x\right)  dx=0,\ f_{n}\left(  x,v\right)  $
and $\beta_{n}\left(  x\right)  $ are $T-$periodic in $x$,
\[
\int_{0}^{T}\int_{\mathbf{R}}v^{2}f_{n}\left(  x,v\right)  dvdx<\infty
\ \text{and }\left\Vert f_{n}\left(  x,v\right)  -f_{0}\left(  v\right)
\right\Vert _{W_{x,v}^{s,p}}<\varepsilon_{n}.
\]
The travelling BGK waves satisfy%
\begin{equation}
\left(  v-c_{n}\right)  \partial_{x}f_{n}-E_{n}\partial_{v}f_{n}=0,
\label{eqn-vlasov-n}%
\end{equation}
and
\begin{equation}
\frac{\partial E_{n}}{\partial x}=-\int_{-\infty}^{+\infty}f_{n}dv+1.
\label{eqn-poisson-n}%
\end{equation}
Because $f_{n}\in W_{x,v}^{s,p}$ with $s>1+\frac{1}{p}>\frac{2}{p},$ so by
Sobolev embedding
\[
\left\Vert f_{n}\right\Vert _{L_{x,v}^{\infty}}\leq C\left\Vert f_{n}%
\right\Vert _{W_{x,v}^{s,p}}<\infty.
\]
By a standard estimate in kinetic theory,
\begin{equation}
\rho_{n}=\int f_{n}dv\leq\left\Vert f_{n}\right\Vert _{L_{x,v}^{\infty}%
}^{\frac{2}{3}}\left(  \int v^{2}f_{n}dv\right)  ^{\frac{1}{3}}
\label{estimate-rho-n}%
\end{equation}
and thus $\rho_{n}\in L^{3}\left(  0,T\right)  $. So $E_{n}\left(  x\right)
\in W^{1,3}\left(  0,T\right)  $ which implies that $E_{n}\left(  x\right)
\in H^{1}\left(  0,T\right)  $ and $E_{n}\left(  x\right)  $ is absolutely
continuous. Define two sets $\mathbf{P}_{n}=\left\{  E_{n}\neq0\right\}  $ and
$\mathbf{Q}_{n}=\left\{  E_{n}=0\right\}  $. Then $\mathbf{P}_{n}$ is of
non-zero measure and $E_{n}^{\prime}=0\ $a.e. on $\mathbf{Q}_{n}.$ Thus we
have
\begin{equation}
\int_{0}^{T}\left\vert E_{n}^{\prime}\left(  x\right)  \right\vert
^{2}dx=-\int_{0}^{T}\rho_{n}\left(  x\right)  E_{n}^{\prime}\left(  x\right)
dx=-\int_{\mathbf{P}_{n}}\rho_{n}\left(  x\right)  E_{n}^{\prime}\left(
x\right)  dx. \label{eqn-e_n'-square}%
\end{equation}
Since $s-1>\frac{1}{p}$, by the trace theorem for fractional Sobolev Space,
\[
\partial_{x}f_{n}|_{v=c_{n}},\ \ \ \partial_{v}f_{n}|_{v=c_{n}}\in
L^{p}\left(  0,T\right)  .
\]
So from equation (\ref{eqn-vlasov-n}), $\partial_{v}f_{n}|_{v=c_{n}}=0$ for
a.e. $x\in\mathbf{P}_{n}$. By Lemma \ref{lemma-hardy}, for a.e. $x\in
\mathbf{P}_{n},$%
\[
\left\vert \int\frac{\partial_{v}f_{n}}{v-c_{n}}dv\right\vert \left(
x\right)  \leq C\left\Vert f_{n}\left(  x,v\right)  \right\Vert _{W_{v}^{s,p}%
}\in L^{p}\left(  \mathbf{P}_{n}\right)  .
\]
From (\ref{eqn-vlasov-n}), when $x\in\mathbf{P}_{n},$%
\[
\rho_{n}^{\prime}\left(  x\right)  =\int\frac{\partial_{v}f_{n}}{v-c_{n}%
}dvE_{n}\left(  x\right)  \in L^{p}\left(  \mathbf{P}_{n}\right)
\]
and it follows from (\ref{eqn-e_n'-square}) that
\begin{equation}
\int_{0}^{T}\left\vert E_{n}^{\prime}\left(  x\right)  \right\vert ^{2}%
dx-\int_{\mathbf{P}_{n}}\int\frac{\partial_{v}f_{n}}{v-c_{n}}dvE_{n}\left(
x\right)  ^{2}dx=0. \label{eqn-e_n'-square-2}%
\end{equation}
Denote $\left\vert \mathbf{P}_{n}\right\vert $ to be the measure of the set
$\mathbf{P}_{n}$. We consider two cases.

Case 1: $\left\vert \mathbf{P}_{n}\right\vert \rightarrow0$ when
$n\rightarrow\infty$. Since
\[
\left\Vert E_{n}\right\Vert _{L^{\infty}\left(  0,T\right)  }\leq\left\Vert
E_{n}^{\prime}\right\Vert _{L^{1}\left(  0,T\right)  }\leq\sqrt{T}\left\Vert
E_{n}^{\prime}\right\Vert _{L^{2}\left(  0,T\right)  },
\]
so from (\ref{eqn-e_n'-square-2}),
\begin{align*}
\left\Vert E_{n}^{\prime}\right\Vert _{L^{2}\left(  0,T\right)  }^{2}  &  \leq
T\left\Vert E_{n}^{\prime}\right\Vert _{L^{2}\left(  0,T\right)  }^{2}%
\int_{\mathbf{P}_{n}}\int\left\vert \frac{\partial_{v}f_{n}}{v-c_{n}%
}\right\vert dvdx\\
&  \leq T\left\Vert E_{n}^{\prime}\right\Vert _{L^{2}\left(  0,T\right)  }%
^{2}\int_{\mathbf{P}_{n}}\left\Vert f_{n}\left(  x,v\right)  \right\Vert
_{W_{v}^{s,p}}dx\\
&  \leq T\left\Vert E_{n}^{\prime}\right\Vert _{L^{2}\left(  0,T\right)  }%
^{2}\left(  \int_{\mathbf{P}_{n}}\left\Vert f_{n}\left(  x,v\right)
-f_{0}\right\Vert _{W_{v}^{s,p}}dx+\left\vert \mathbf{P}_{n}\right\vert
\left\Vert f_{0}\right\Vert _{W^{s,p}}\right) \\
&  \leq T\left\Vert E_{n}^{\prime}\right\Vert _{L^{2}\left(  0,T\right)  }%
^{2}\left(  C\left\Vert f_{n}\left(  x,v\right)  -f_{0}\right\Vert
_{W_{x,v}^{s,p}}+\left\vert \mathbf{P}_{n}\right\vert \left\Vert
f_{0}\right\Vert _{W^{s,p}}\right) \\
&  <\left\Vert E_{n}^{\prime}\right\Vert _{L^{2}\left(  0,T\right)  }^{2},
\end{align*}
when $n$ is large enough. Thus for large $n,\ \left\Vert E_{n}^{\prime
}\right\Vert _{L^{2}\left(  0,T\right)  }=0$ and thus $E_{n}\left(  x\right)
\equiv0,\ $which is a contradiction.

Case 2: $\left\vert \mathbf{P}_{n}\right\vert \rightarrow d>0$ when
$n\rightarrow\infty$. When $n$ is large enough, we have $\left\vert
\mathbf{P}_{n}\right\vert \geq\frac{d}{2}$. By the trace Theorem,
\begin{align*}
\left\Vert \partial_{v}f_{n}\left(  x,c_{n}\right)  -\partial_{v}f_{0}\left(
c_{n}\right)  \right\Vert _{L^{p}\left(  \mathbf{P}_{n}\right)  }  &
\leq\left\Vert \partial_{v}f_{n}\left(  x,c_{n}\right)  -\partial_{v}%
f_{0}\left(  c_{n}\right)  \right\Vert _{L^{p}\left(  0,T\right)  }\\
&  \leq C\left\Vert f_{n}-f_{0}\right\Vert _{W^{s,p}}\leq C\varepsilon_{n}.
\end{align*}
Since $\partial_{v}f_{n}\left(  x,c_{n}\right)  =0\,$ for a.e. $x\in
\mathbf{P}_{n}$, so $\left\Vert \partial_{v}f_{0}\left(  c_{n}\right)
\right\Vert _{L^{p}\left(  \mathbf{P}_{n}\right)  }\leq C\varepsilon_{n}$
which implies that
\[
\left\vert \partial_{v}f_{0}\left(  c_{n}\right)  \right\vert \leq
\frac{C\varepsilon_{n}}{\left(  \frac{d}{2}\right)  ^{\frac{1}{p}}}\text{.}%
\]
Thus $\partial_{v}f_{0}\left(  c_{n}\right)  \rightarrow0$ when $n\rightarrow
+\infty$. Therefore there exist a subsequence of $\left\{  c_{n}\right\}  $,
such that either it converges to one of the critical points of $f_{0}$, say
$v_{i}\in S$ or it diverges. We discuss these two cases separately below. To
simplify notations, we still denote the subsequence by $\left\{
c_{n}\right\}  $.

\ \ \ \ \ \ Case 2.1: $c_{n}\rightarrow v_{i}\in S$. Rewrite
(\ref{eqn-e_n'-square-2}) as
\begin{equation}
\int_{0}^{T}\left\vert E_{n}^{\prime}\left(  x\right)  \right\vert ^{2}%
dx=\int_{\mathbf{R}}\frac{\partial_{v}f_{0}}{v-v_{i}}dv\int_{\mathbf{P}_{n}%
}E_{n}\left(  x\right)  ^{2}dx+\int_{\mathbf{P}_{n}}V_{n}\left(  x\right)
E_{n}\left(  x\right)  ^{2}dx, \label{eqn-e_n'-square-3}%
\end{equation}
where
\[
V_{n}\left(  x\right)  =\int_{\mathbf{R}}\frac{\partial_{v}f_{n}}{v-c_{n}%
}dv-\int_{\mathbf{R}}\frac{\partial_{v}f_{0}}{v-v_{i}}dv=\int_{\mathbf{R}%
}\frac{\partial_{v}\left(  f_{n}\left(  x,v+c_{n}\right)  -f_{0}\left(
v+v_{i}\right)  \right)  }{v}dv.
\]
Note that $\partial_{v}\left(  f_{n}\left(  x,v+c_{n}\right)  -f_{0}\left(
v+v_{i}\right)  \right)  |_{v=0}=0\,$\ for $x\in\mathbf{P}_{n}$, so by Lemma
\ref{lemma-hardy}, we have
\begin{align*}
\int_{\mathbf{P}_{n}}\left\vert V_{n}\left(  x\right)  \right\vert dx  &  \leq
C\int_{\mathbf{P}_{n}}\left\Vert f_{n}\left(  x,v+c_{n}\right)  -f_{0}\left(
v+v_{i}\right)  \right\Vert _{W_{v}^{s,p}}dx\\
&  \leq C\int_{0}^{T}\left(  \left\Vert f_{n}-f_{0}\right\Vert _{W_{v}^{s,p}%
}+\left\Vert f_{0}\left(  v+c_{n}\right)  -f_{0}\left(  v+v_{i}\right)
\right\Vert _{W^{s,p}}\right)  dx\\
&  \leq C\left(  \left\Vert f_{n}-f_{0}\right\Vert _{W_{x,v}^{s,p}}+\left\Vert
f_{0}\left(  v+c_{n}\right)  -f_{0}\left(  v+v_{i}\right)  \right\Vert
_{W^{s,p}}\right)
\end{align*}
So $\int_{\mathbf{P}_{n}}\left\vert V_{n}\left(  x\right)  \right\vert
dx\rightarrow0$ when $n\rightarrow\infty$. Since $\int_{0}^{T}E_{n}\left(
x\right)  dx=0$ and $E_{n}\in H^{1}\left(  0,T\right)  $ is $T-$periodic$,$ we
have
\[
\left\Vert E_{n}^{\prime}\right\Vert _{L^{2}\left(  0,T\right)  }\geq
\frac{2\pi}{T}\left\Vert E_{n}\right\Vert _{L^{2}\left(  0,T\right)  }.\text{
}%
\]
Also by the assumption of Theorem \ref{thm-non-existence},
\[
a_{i}=\int_{\mathbf{R}}\frac{\partial_{v}f_{0}}{v-v_{i}}<\left(  \frac{2\pi
}{T}\right)  ^{2}.
\]
Combining above, from (\ref{eqn-e_n'-square-3}), we get
\begin{align*}
\left\Vert E_{n}^{\prime}\right\Vert _{L^{2}\left(  0,T\right)  }^{2}  &
\leq\frac{\max\left\{  a_{i},0\right\}  }{\left(  \frac{2\pi}{T}\right)  ^{2}%
}\left\Vert E_{n}\right\Vert _{L^{2}\left(  0,T\right)  }^{2}+\int
_{\mathbf{P}_{n}}\left\vert V_{n}\left(  x\right)  \right\vert dx\left\Vert
E_{n}\right\Vert _{L^{\infty}}^{2}\\
&  \leq\left\Vert E_{n}^{\prime}\right\Vert _{L^{2}\left(  0,T\right)  }%
^{2}\left(  \frac{\max\left\{  a_{i},0\right\}  }{\left(  \frac{2\pi}%
{T}\right)  ^{2}}+T\int_{\mathbf{P}_{n}}\left\vert V_{n}\left(  x\right)
\right\vert dx\right) \\
&  <\left\Vert E_{n}^{\prime}\right\Vert _{L^{2}\left(  0,T\right)  }^{2},
\end{align*}
when $n$ is large enough. A contradiction again.

\ \ \ \ \ \ \ \ Case 2.2: $\left\{  c_{n}\right\}  $ diverges. We assume
$c_{n}\rightarrow+\infty,$ and the case when $c_{n}\rightarrow-\infty$ is
similar. Again, for a.e. $x\in\mathbf{P}_{n}$, $\partial_{v}f_{n}\left(
x,c_{n}\right)  =0$. Let $\chi_{n}\left(  v\right)  $ be a cut-off function
such that: $0\leq\chi_{n}\leq1,\ \chi_{n}\left(  v\right)  =1$ when
$v\in\left[  \frac{c_{n}}{2},\frac{3c_{n}}{2}\right]  $;\ $\chi_{n}\left(
v\right)  =0$ when $v\notin\left[  \frac{c_{n}}{2}-1,\frac{3c_{n}}%
{2}+1\right]  $ and $\left\vert \chi_{n}\right\vert _{C^{1}}\leq M\ \left(
\text{independent of }n\right)  $. Since $W^{s_{1},p}\hookrightarrow
W^{s_{2},p}$ when $s_{1}>s_{2},\ $we can assume $\frac{1}{p}<s-1\leq1$.\ Then
\begin{align*}
&  \ \ \ \ \ \int_{\mathbf{P}_{n}}\left\vert \int_{\mathbf{R}}\frac
{\partial_{v}f_{n}}{v-c_{n}}dv\right\vert dx\leq\int_{\mathbf{P}_{n}}\left(
\int_{\mathbf{R}}\left\vert \frac{\chi_{n}\partial_{v}f_{n}}{v-c_{n}%
}\right\vert dv+\int_{\mathbf{R}}\left\vert \frac{\left(  1-\chi_{n}\right)
\partial_{v}f_{n}}{v-c_{n}}\right\vert dv\right)  dx\\
&  \leq C\int_{\mathbf{P}_{n}}\left(  \left\Vert \chi_{n}\partial_{v}%
f_{n}\right\Vert _{W_{v}^{s-1,p}}+\int_{\left\vert v-c_{n}\right\vert
\geq\frac{c_{n}}{2}}\left\vert \frac{\partial_{v}f_{n}}{v-c_{n}}\right\vert
dv\right)  dx\\
&  \leq C\int_{0}^{T}\left(  \left\Vert \chi_{n}\partial_{v}\left(
f_{n}-f_{0}\right)  \right\Vert _{W_{v}^{s-1,p}}+\left\Vert \chi_{n}%
\partial_{v}f_{0}\right\Vert _{_{W^{s-1,p}}}+c_{n}^{-1+\frac{1}{p^{\prime}}%
}\left\Vert f_{n}\right\Vert _{W_{v}^{1,p}}\right)  dx\\
&  \leq C\left(  M\right)  \left\Vert f_{n}-f_{0}\right\Vert _{W_{x,v}^{s,p}%
}+CT\left\Vert \chi_{n}\partial_{v}f_{0}\right\Vert _{_{W^{s-1,p}}}%
+CTc_{n}^{-1+\frac{1}{p^{\prime}}}\left\Vert f_{n}\right\Vert _{W_{x,v}^{1,p}%
}\\
&  \rightarrow0,\ \text{when }n\rightarrow\infty\text{,}%
\end{align*}
and this again leads to a contradiction as in Case 1. In the above, we use two estimates:

i)
\[
\left\Vert \chi_{n}\partial_{v}\left(  f_{n}-f_{0}\right)  \right\Vert
_{W_{v}^{s-1,p}}\leq C\left(  M\right)  \left\Vert \partial_{v}\left(
f_{n}-f_{0}\right)  \right\Vert _{W_{v}^{s-1,p}}.
\]

ii)
\begin{equation}
\left\Vert \chi_{n}\partial_{v}f_{0}\right\Vert _{_{W^{s-1,p}}}\rightarrow
0,\ \text{when }n\rightarrow\infty\text{. } \label{estimate-xi-n}%
\end{equation}
We prove them below. Estimate i) follows from the following general estimate:

Given $u\left(  v\right)  \in C_{0}^{1}\left(  \mathbf{R}\right)  ,\ $then for
any $g\in W^{\alpha,p}\left(  \mathbf{R}\right)  $ $\left(  p>1,0\leq
\alpha\leq1\right)  $, we have
\begin{equation}
\left\Vert ug\right\Vert _{W^{\alpha,p}\left(  \mathbf{R}\right)  }\leq
C\left(  \left\Vert u\right\Vert _{C^{1}}\right)  \left\Vert g\right\Vert
_{W^{\alpha,p}\left(  \mathbf{R}\right)  }. \label{estimate-s,p}%
\end{equation}
This estimate is obvious for $\alpha=0$ and $\alpha=1$, and the case
$\alpha\in\left(  0,1\right)  $ then follows from the interpolation theorem.
To show estimate ii), we first note that for any $h\in C_{0}^{\infty}\left(
\mathbf{R}\right)  ,\ $obviously
\[
\left\Vert \chi_{n}h\right\Vert _{_{W^{s-1,p}}}\leq C\left\Vert \chi
_{n}h\right\Vert _{_{W^{1,p}}}\rightarrow0,\ \text{when\ }n\rightarrow\infty.
\]
Then the estimate (\ref{estimate-xi-n}) follows by using the fact that
$C_{0}^{\infty}\left(  \mathbf{R}\right)  $ is dense in $W^{s-1,p}$ and the
estimate (\ref{estimate-s,p}). This finishes the proof of Theorem
\ref{thm-non-existence}.
\end{proof}

In the above proof of Theorem \ref{thm-non-existence}, we do not assume that
the possible BGK waves to have the form (\ref{form-f}) or the electric field
to vanish only at finitely many points. So we can exclude any traveling
structures which might have the form of a nontrivial wave profile plus a
homogeneous part.

The following Lemma shows that the condition $0<T<T_{0}$ in Theorem
\ref{thm-non-existence} is necessary.

\begin{lemma}
\label{lemma-existence-critical}Assume $f_{0}\left(  v\right)  \in
C^{4}\left(  \mathbf{R}\right)  \cap W^{2,p}\left(  \mathbf{R}\right)  $
$\left(  p>1\right)  .\ $Let $S=\left\{  v_{i}\right\}  _{i=1}^{l}$ be the set
of all extrema points of $f_{0}$ and $0<T_{0}<+\infty$ be defined by
\begin{equation}
\left(  \frac{2\pi}{T_{0}}\right)  ^{2}=\max_{v_{i}\in S}\int\frac
{f_{0}^{\prime}\left(  v\right)  }{v-v_{i}}dv=\int\frac{f_{0}^{\prime}\left(
v\right)  }{v-v_{m}}dv \label{bifurcation-period}%
\end{equation}
Then $\exists\ \varepsilon_{0}>0$, such that for any $0<\varepsilon
<\varepsilon_{0}\ $there exist nontrivial travelling wave solutions $\left(
f_{\varepsilon}\left(  x-v_{m}t,v\right)  ,E_{\varepsilon}\left(
x-v_{m}t\right)  \right)  \ $to (\ref{vpe}), such that$\ \left(
f_{\varepsilon}\left(  x,v\right)  ,E_{\varepsilon}\left(  x\right)  \right)
$ has period $T_{0}$ in $x$,$\ E_{\varepsilon}\left(  x\right)  $ not
identically zero, and$\ $%
\begin{equation}
\left\Vert f_{\varepsilon}-f_{0}\right\Vert _{L_{x,v}^{1}}+\ \int_{0}^{T}%
\int_{\mathbf{R}}v^{2}\left\vert f_{0}-f_{\varepsilon}\right\vert
\ dxdv+\left\Vert f_{\varepsilon}-f_{0}\right\Vert _{W_{x,v}^{2,p}%
}<\varepsilon\label{norm-lemma-critical}%
\end{equation}

\end{lemma}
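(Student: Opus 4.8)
The plan is to produce the required wave as a genuine bifurcation from the homogeneous state at the \emph{exact} critical data, the point being that at speed $v_m$ and period $T_0$ the bifurcation condition (\ref{condition-bifurcation}) holds as an identity. Passing to the frame moving with speed $v_m$, i.e. replacing $f_0(v)$ by $\tilde f_0(w)=f_0(w+v_m)$, a travelling wave of speed $v_m$ becomes a steady BGK wave near $(\tilde f_0,0)$; since $v_m$ is an extremum, $\tilde f_0'(0)=0$, and the argument in Proposition \ref{propo-existence} (using $f_0\in C^4\cap W^{2,p}$) shows $\int \tilde f_0'(w)/w\,dw=\int f_0'(v)/(v-v_m)\,dv=(2\pi/T_0)^2$ is finite. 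The decisive difference from Theorem \ref{thm-existence} is that I would not perturb $\tilde f_0$ by a concentrating bump $\frac{\gamma}{\delta}F(\cdot/(\gamma\delta))$, whose norm is controlled only for $s<1+\frac{1}{p}$, but by a pure dilation
\[
\tilde f_\delta(w)=\frac{1}{\delta}\,\tilde f_0(w/\delta),
\]
which preserves the mass, keeps the extremum at $w=0$, satisfies $\int \tilde f_\delta'(w)/w\,dw=\delta^{-2}(2\pi/T_0)^2$, and --- crucially --- converges to $\tilde f_0$ in $W^{2,p}$ (and in $L^1$ and the $v^2$-weighted norm) as $\delta\to1$, because the dilation group is strongly continuous on $W^{2,p}$. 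This is exactly why the strong $W^{2,p}$ closeness persists at the critical exponent.

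Because $\tilde f_0$ is in general not even about $w=0$, and evenness cannot be arranged in $W^{2,p}$, I would build the BGK wave by the even/odd splitting recalled in the Remark after Theorem \ref{thm-existence}. Decomposing $\tilde f_\delta=f_\delta^{e}+f_\delta^{o}$ into even and odd parts in $w$, the odd part contributes zero to the integral $\int(\cdot)'(w)/w\,dw$, so the even part still satisfies $\int (f_\delta^{e})'(w)/w\,dw=\delta^{-2}(2\pi/T_0)^2$. Writing $f_\delta^{e}(w)=G_\delta^{e}(w^2)$ with $G_\delta^{e}\in C^2$ (directly, since $f_\delta^e$ is even and $C^4$) and forming the energy profile $f_\delta^{e,\beta}$, the reduced Poisson ODE $\beta_{xx}=h_\delta(\beta)$ has $h_\delta(0)=0$ and $h_\delta'(0)=-\delta^{-2}(2\pi/T_0)^2<0$, so $\beta=0$ is a center; the standard bifurcation of periodic orbits near a center then yields, for each small $r=\|\beta\|_{H^2}$, a $T(\delta;r)$-periodic $\beta_{\delta;r}$ with $T(\delta;r)\to\delta T_0$ as $r\to0$. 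The odd part is reinstated, as in the Remark, as an energy profile times a cutoff that vanishes on the trapped region; since it integrates to $0$ in $v$ it does not enter $h_\delta$, and at $\beta=0$ it recovers $f_\delta^{o}$, producing a genuine nontrivial steady BGK wave $f_\delta^{e,\beta}+f_\delta^{o,\beta}$ with $E=-\beta_x\not\equiv0$ (and $\geq0$ for $\|\beta\|_{\infty}$ small).

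To pin the period to exactly $T_0$ I would use the dilation parameter as in Case 3 of Proposition \ref{propo-existence}: fix $\delta_1<1<\delta_2$ near $1$; since $T(\delta;r)\to\delta T_0$, for $r$ small one has $T(\delta_1;r)<T_0<T(\delta_2;r)$, and continuity of $T(\cdot;r)$ in $\delta$ with the intermediate value theorem gives $\delta(r)\in(\delta_1,\delta_2)$ with $T(\delta(r);r)=T_0$. Choosing $\delta_1,\delta_2$ close to $1$ first makes $\|\tilde f_{\delta}-\tilde f_0\|<\varepsilon/2$ in the combined norm for all $\delta\in(\delta_1,\delta_2)$; then for $r$ small the wave is within $\varepsilon/2$ of $\tilde f_{\delta(r)}$, and undoing the moving-frame change of variables yields (\ref{norm-lemma-critical}). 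The main obstacle I expect is precisely this last bound in the \emph{strong} norm, namely $f_\delta^{e,\beta}+f_\delta^{o,\beta}\to\tilde f_\delta$ in $W^{2,p}_{x,v}$ as $r\to0$. The even part is handled by the dominated-convergence/chain-rule argument asserted in Proposition \ref{propo-existence}, but the odd part is delicate near the separatrix $e=0$: the trapped-region cutoff has $x$- and $v$-derivatives growing like powers of $|\min\beta|^{-1}$. Here $f_0\in C^4$ together with $\tilde f_0'(0)=0$ is exactly what is needed, since it forces $f_\delta^{o}(w)=O(w^3)$ and hence the odd energy profile $G_\delta^{o}(e)=O(e^{3/2})$ near $e=0$; one must show this vanishing beats the cutoff derivatives so that the second-order $L^p$ contribution over the shrinking transition layer tends to $0$.
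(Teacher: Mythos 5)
Your overall architecture (pass to the frame of speed $v_m$, use a pure dilation $\delta\mapsto\frac1\delta f_0(\cdot/\delta)$ near $\delta=1$ to tune the bifurcation period to exactly $T_0$, and invoke Case 3 of Proposition \ref{propo-existence}) matches the paper. But your proposal rests on the premise that ``evenness cannot be arranged in $W^{2,p}$,'' and that premise is wrong; in fact the paper's entire proof of this lemma consists of arranging it. Setting $v_m=0$, the symmetrization
\[
f_{\delta_1}(v)=f_0(v)-\tfrac12\bigl(f_0(v)-f_0(-v)\bigr)\sigma\bigl(v/\delta_1\bigr)
\]
is even on $[-\delta_1,\delta_1]$, leaves $\int f_0'(v)/v\,dv$ unchanged (the subtracted term is odd, so its derivative divided by $v$ is odd), and satisfies $\|f_{\delta_1}-f_0\|_{W^{2,p}}\to0$ as $\delta_1\to0$. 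The second‑derivative estimate is exactly where the hypothesis that $v_m$ is an \emph{extremum} enters: since $f_0'(0)=0$, one has $f_0(v)-f_0(-v)=\int_{-v}^{v}\int_0^s f_0''(\tau)\,d\tau\,ds$, whence $|f_0(v)-f_0(-v)|^p\le Cv^{2p-1}\|f_0''\|^p_{L^p(-v,v)}$, and this beats the $\delta_1^{-2}$ from $\sigma''(v/\delta_1)$, giving $\|\partial_{vv}(f_{\delta_1}-f_0)\|_{L^p}\le C\|f_0''\|_{L^p(-2\delta_1,2\delta_1)}\to0$. Once the profile is even near the critical point and still satisfies the exact bifurcation identity, Case 3 of Proposition \ref{propo-existence} applies verbatim and no odd part ever appears.

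Because you rejected this step, you are forced into the Dorning--Holloway construction, reinstating the odd part as $\pm G^o(e)$ times a cutoff supported off the trapped region, and the decisive estimate of your proof --- that $f^{e,\beta}_\delta+f^{o,\beta}_\delta\to\tilde f_\delta$ in $W^{2,p}_{x,v}$ as $r\to0$ --- is left as ``one must show.'' That is a genuine gap, not a routine verification: the second derivatives of the separatrix cutoff scale like $|\min\beta|^{-2}$, the gain from $G^o(e)=O(e^{3/2})$ is only three halves of a power of $e\sim|\min\beta|$, and one must also track the mixed and pure $x$-derivatives, which bring in $\beta_x$ and $\beta_{xx}$ and are only controlled in $L^2$ by $\|\beta\|_{H^2}=r$ (an issue for $p>2$). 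A back-of-envelope count suggests the pure $v$-derivative terms do tend to zero in $L^p$ over the shrinking transition layer, but nothing in your proposal establishes this, and the whole point of the lemma is precisely this kind of critical-exponent bookkeeping. In short: your route is not unreasonable, but as written it replaces the one estimate the paper actually proves with a harder unproven one, while the obstruction you invoked to justify the detour does not exist.
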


\begin{proof}
To simplify notations, we assume $v_{m}=0$. As in the proof of Lemma
\ref{lemma-approxi}, for $\delta_{1}>0\ $we define
\begin{align*}
f_{\delta_{1}}\left(  v\right)   &  =f_{0}\left(  v\right)  \left(
1-\sigma\left(  \frac{v}{\delta_{1}}\right)  \right)  +\left(  \frac
{f_{0}\left(  v\right)  +f_{0}\left(  -v\right)  }{2}\right)  \sigma\left(
\frac{v}{\delta_{1}}\right) \\
&  =f_{0}\left(  v\right)  -\left(  \frac{f_{0}\left(  v\right)  -f_{0}\left(
-v\right)  }{2}\right)  \sigma\left(  \frac{v}{\delta_{1}}\right)  ,
\end{align*}
where $\sigma\left(  v\right)  $ is the cut-off function defined by
(\ref{cut-off}). Then we have:
\[
\text{i)}\ \ \ \ f_{\delta_{1}}^{\prime}\left(  0\right)  =0,\ \int\frac
{f_{0}^{\prime}\left(  v\right)  }{v}dv=\int\frac{f_{\delta_{1}}^{\prime
}\left(  v\right)  }{v}dv=\left(  \frac{2\pi}{T_{0}}\right)  ^{2},
\]
and
\[
\text{ii)\ \ \ }f_{\delta_{1}}\left(  v\right)  \in C^{4}\left(
\mathbf{R}\right)  \cap W^{2,p}\left(  \mathbf{R}\right)  ;\ \left\Vert
f_{\delta_{1}}-f_{0}\right\Vert _{W^{2,p}\left(  \mathbf{R}\right)
}\rightarrow0,\text{when\ \ \ }\delta_{1}\rightarrow0.
\]
Property i) follows since $\sigma\left(  v\right)  $ is even. To prove
property ii), we only need to show that $\left\Vert \partial_{vv}\left(
f_{\delta_{1}}-f_{0}\right)  \right\Vert _{L^{p}\left(  \mathbf{R}\right)
}\rightarrow0$ when $\delta_{1}\rightarrow0$. Since in the proof of Lemma
\ref{lemma-approxi}, it is already shown that $\left\Vert f_{\delta_{1}}%
-f_{0}\right\Vert _{W^{1,p}\left(  \mathbf{R}\right)  }\rightarrow0$ when
$\delta_{1}\rightarrow0$. Note that
\begin{align*}
\partial_{vv}\left(  f_{\delta_{1}}-f_{0}\right)   &  =\frac{1}{2\delta
_{1}^{2}}\sigma^{\prime\prime}\left(  \frac{v}{\delta_{1}}\right)  \left(
f_{0}\left(  v\right)  -f_{0}\left(  -v\right)  \right)  +\frac{1}{\delta_{1}%
}\sigma^{\prime}\left(  \frac{v}{\delta_{1}}\right)  \left(  f_{0}^{\prime
}\left(  v\right)  +f_{0}^{\prime}\left(  -v\right)  \right) \\
&  +\frac{1}{2}\sigma\left(  \frac{v}{\delta_{1}}\right)  \left(
f_{0}^{\prime\prime}\left(  v\right)  -f_{0}^{\prime\prime}\left(  -v\right)
\right) \\
&  =I+II+III.
\end{align*}
Since%
\begin{align*}
f_{0}\left(  v\right)  -f_{0}\left(  -v\right)   &  =\int_{-v}^{v}%
f_{0}^{\prime}\left(  s\right)  ds=\int_{-v}^{v}\int_{0}^{s}f_{0}%
^{\prime\prime}\left(  \tau\right)  d\tau ds\\
&  =\int_{0}^{v}\left(  v-\tau\right)  f_{0}^{\prime\prime}\left(
\tau\right)  d\tau+\int_{-v}^{0}\left(  -v-\tau\right)  f_{0}^{\prime\prime
}\left(  \tau\right)  d\tau,
\end{align*}
and
\begin{align*}
&  \ \ \ \ \ \left\vert f_{0}\left(  v\right)  -f_{0}\left(  -v\right)
\right\vert ^{p}\\
&  \leq C\left(  \left\vert \int_{0}^{v}\left(  v-\tau\right)  f_{0}%
^{\prime\prime}\left(  \tau\right)  d\tau\right\vert ^{p}+\left\vert \int
_{-v}^{0}\left(  -v-\tau\right)  f_{0}^{\prime\prime}\left(  \tau\right)
d\tau\right\vert ^{p}\right) \\
&  \leq C\left(  \int_{0}^{v}\left\vert f_{0}^{\prime\prime}\left(
\tau\right)  \right\vert ^{p}d\tau\left(  \int_{0}^{v}\left(  v-\tau\right)
^{\frac{p}{p-1}}d\tau\right)  ^{p-1}+\int_{-v}^{0}\left\vert f_{0}%
^{\prime\prime}\left(  \tau\right)  \right\vert ^{p}d\tau\left(  \int_{-v}%
^{0}\left(  v+\tau\right)  ^{\frac{p}{p-1}}d\tau\right)  ^{p-1}\right) \\
&  \leq Cv^{2p-1}\left\Vert f_{0}^{\prime\prime}\right\Vert _{L^{p}\left(
-v,v\right)  }^{p},
\end{align*}
so
\begin{align*}
\int_{\mathbf{R}}\left\vert I\right\vert ^{p}\ dv  &  \leq\frac{C}{\delta
_{1}^{2p}}\int_{\delta_{1}}^{2\delta_{1}}\left\vert f_{0}\left(  v\right)
-f_{0}\left(  -v\right)  \right\vert ^{p}dv\leq\frac{C}{\delta_{1}^{2p}}%
\int_{\delta_{1}}^{2\delta_{1}}v^{2p-1}\left\Vert f_{0}^{\prime\prime
}\right\Vert _{L^{p}\left(  -v,v\right)  }^{p}dv\\
&  \leq C\left\Vert f_{0}^{\prime\prime}\right\Vert _{L^{p}\left(
-2\delta_{1},2\delta_{1}\right)  }^{p}.
\end{align*}
Similarly,
\begin{align*}
\int_{\mathbf{R}}\left\vert II\right\vert ^{p}\ dv  &  \leq\frac{C}{\delta
_{1}^{p}}\int_{\delta_{1}}^{2\delta_{1}}\left(  \left\vert \int_{0}^{v}%
f_{0}^{\prime\prime}\left(  \tau\right)  d\tau\right\vert ^{p}+\left\vert
\int_{-v}^{0}f_{0}^{\prime\prime}\left(  \tau\right)  d\tau\right\vert
^{p}\right)  dv\\
&  \leq\frac{C}{\delta_{1}^{p}}\int_{\delta_{1}}^{2\delta_{1}}v^{p-1}%
\left\Vert f_{0}^{\prime\prime}\right\Vert _{L^{p}\left(  -v,v\right)  }%
^{p}dv\leq C\left\Vert f_{0}^{\prime\prime}\right\Vert _{L^{p}\left(
-2\delta_{1},2\delta_{1}\right)  }^{p}%
\end{align*}
and
\[
\int_{\mathbf{R}}\left\vert III\right\vert ^{p}\ dv\leq C\left\Vert
f_{0}^{\prime\prime}\right\Vert _{L^{p}\left(  -2\delta_{1},2\delta
_{1}\right)  }^{p},
\]
thus when $\delta_{1}\rightarrow0$, $\left\Vert \partial_{vv}\left(
f_{\delta_{1}}-f_{0}\right)  \right\Vert _{L^{p}\left(  \mathbf{R}\right)
}\rightarrow0.\ $Choose $\delta_{1}>0$ such that
\[
\left\Vert f_{\delta_{1}}-f_{0}\right\Vert _{W^{2,p}\left(  \mathbf{R}\right)
}<\varepsilon/2.
\]
Since $f_{\delta_{1}}\in C^{4}\left(  \mathbf{R}\right)  \cap W^{2,p}\left(
\mathbf{R}\right)  $ and
\[
\int\frac{f_{\delta_{1}}^{\prime}\left(  v\right)  }{v}dv=\left(  \frac{2\pi
}{T_{0}}\right)  ^{2},
\]
this is exactly the Case 3 treated in the proof of Proposition
\ref{propo-existence}, so we can construct a nontrivial BGK solution $\left(
f_{\varepsilon},E_{\varepsilon}\right)  $ near $\left(  f_{\delta_{1}}\left(
v\right)  ,0\right)  $ satisfying
\[
\left\Vert f_{\varepsilon}-f_{\delta_{1}}\right\Vert _{L_{x,v}^{1}}+\ \int
_{0}^{T}\int_{\mathbf{R}}v^{2}\left\vert f_{\varepsilon}-f_{\delta_{1}%
}\right\vert \ dxdv+\left\Vert f_{\varepsilon}-f_{\delta_{1}}\right\Vert
_{W_{x,v}^{2,p}}<\frac{\varepsilon}{2}.
\]
Thus $\left(  f_{\varepsilon},E_{\varepsilon}\right)  $ is a BGK solution
satisfying (\ref{norm-lemma-critical}).
\end{proof}

From the proof of Theorem \ref{thm-non-existence}, it is easy to get Corollary
\ref{cor-stable}.

\begin{proof}
[Proof of Corollary \ref{cor-stable}]Suppose otherwise, then there exists a
sequence $\varepsilon_{n}\rightarrow0$, and homogeneous states $\left\{
f_{n}\left(  v\right)  \right\}  $ which are linear unstable with $x-$period
$T$ and $\left\Vert f_{n}-f_{0}\right\Vert _{W^{s,p}\left(  \mathbf{R}\right)
}<\varepsilon_{n}$. By Lemma \ref{lemma-penrose}, for each $n$, there exists a
critical point $v_{n}$ of $f_{n}\left(  v\right)  $ such that
\[
\int\frac{f_{n}^{\prime}\left(  v\right)  }{v-v_{n}}dv>\left(  \frac{2\pi}%
{T}\right)  ^{2}.
\]
Since
\[
\left\vert f_{0}^{\prime}\left(  v_{n}\right)  \right\vert \leq\left\Vert
\partial_{v}\left(  f_{n}-f_{0}\right)  \right\Vert _{C\left(  \mathbf{R}%
\right)  }\leq C\left\Vert f_{n}-f_{0}\right\Vert _{W^{s,p}\left(
\mathbf{R}\right)  }\leq C\varepsilon_{n},
\]
either $\left\{  v_{n}\right\}  $ converges to one of the critical point of
$f_{0}\left(  v\right)  ,$ say $v_{0}$, or $\left\{  v_{n}\right\}  $
diverges. As in the proof of Theorem \ref{thm-non-existence}, in the first
case, we have
\[
\int\frac{f_{n}^{\prime}\left(  v\right)  }{v-v_{n}}dv\rightarrow\int
\frac{f_{0}^{\prime}\left(  v\right)  }{v-v_{0}}dv,\ \text{when }%
n\rightarrow\infty.
\]
This implies that
\[
\int\frac{f_{0}^{\prime}\left(  v\right)  }{v-v_{0}}dv\geq\left(  \frac{2\pi
}{T}\right)  ^{2}>\left(  \frac{2\pi}{T_{0}}\right)  ^{2},
\]
a contradiction. For the second case, we have
\[
\int\frac{f_{n}^{\prime}\left(  v\right)  }{v-v_{n}}dv\rightarrow0,\text{when
}n\rightarrow\infty,\
\]
a contradiction again.
\end{proof}

\section{Linear damping}

In this section, we study in details the linear damping problem in Sobolev
spaces. First, the linear decay estimates derived here are used in Section 5
to show that all invariant structures in $H^{s}$ $\left(  s>\frac{3}%
{2}\right)  $ neighborhood of stable homogeneous states are trivial. Second,
the linear decay holds true for initial data as rough as $f\left(  t=0\right)
\in L^{2}$, and this suggests that Theorems \ref{thm-existence},
\ref{thm-non-existence} and \ref{thm-invariant} about nonlinear dynamics have
no analogues at the linear level. We refer to Remark \ref{rmk-linear} for more discussions.

The linearized Vlasov-Poisson around a homogeneous state $\left(  f_{0}\left(
v\right)  ,0\right)  \ $is the following%
\begin{equation}
\label{lvpe}\left\{
\begin{array}
[c]{c}%
\frac{\partial f}{\partial t}+v\frac{\partial f}{\partial x}-E\frac{\partial
f_{0}}{\partial v}=0,\\
\frac{\partial E}{\partial x}=-\int_{-\infty}^{+\infty}f\ dv,
\end{array}
\right.
\end{equation}
where $f$ and $E$ are $T-$periodic in$\ x$ and the neutralizing condition
becomes $\int_{0}^{T}\int_{\mathbf{R}}f\ dvdx=0$. Notice that any $\left(
f,E\right)  =\left(  g\left(  v\right)  ,0\right)  $ with $\int g\left(
v\right)  dv=0\ $is a steady solution of the linear system (\ref{lvpe}). For a
general solution $\left(  f,E\right)  $ of (\ref{lvpe})$,$ the homogeneous
component of $f\ $remains steady and does not affect the evolution of $E$. So
we can consider a function $h\left(  x,v\right)  $ which is $T-$periodic in
$x$ and $\int_{0}^{T}h\left(  x,v\right)  dx=0$. Denote its Fourier series
representation by%

\[
h\left(  x,v\right)  =\sum_{0\neq k\in\mathbf{Z}}e^{i\frac{2\pi}{T}kx}%
h_{k}\left(  v\right)  .
\]
We define the space $H_{x}^{s_{x}}H_{v}^{s_{v}}$ by
\[
h\in H_{x}^{s_{x}}H_{v}^{s_{v}}\text{ if }\left\Vert h\right\Vert
_{H_{x}^{s_{x}}H_{v}^{s_{v}}}=\left(  \sum_{k\neq0}\left\vert k\right\vert
^{2s_{x}}\left\Vert h_{k}\right\Vert _{H_{v}^{s_{v}}}^{2}\right)  ^{\frac
{1}{2}}<\infty.
\]

\begin{proposition}
\label{prop-linear-integral-estimate}Assume $f_{0}\left(  v\right)  \in
H^{s_{0}}\left(  \mathbf{R}\right)  $ $\left(  s_{0}>\frac{3}{2}\right)
\ $and let $0<T_{0}\leq+\infty$ be defined by (\ref{defn-T-0}). Let $\left(
f\left(  x,v,t\right)  ,E\left(  x,t\right)  \right)  $ be a solution of
(\ref{lvpe}) with $x-$period $T<T_{0}\ $and $g\left(  x,v\right)  =f\left(
x,v,0\right)  -\frac{1}{T}\int_{0}^{T}f\left(  x,v,0\right)  dx$. If$\ g\in
H_{x}^{s_{x}}H_{v}^{s_{v}}$ with $\left\vert s_{v}\right\vert \leq s_{0}-1,$
then%
\begin{equation}
\left\Vert t^{s_{v}}E\left(  x,t\right)  \right\Vert _{L_{t}^{2}H_{x}%
^{\frac{3}{2}+s_{x}+s_{v}}}\leq C_{0}\left\Vert g\right\Vert _{H_{x}^{s_{x}%
}H_{v}^{s_{v}}}\leq C_{0}\left\Vert f\left(  x,v,0\right)  \right\Vert
_{H_{x}^{s_{x}}H_{v}^{s_{v}}}, \label{estimate-linear-integral}%
\end{equation}
for some constant $C_{0}.$
\end{proposition}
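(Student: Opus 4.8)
The plan is to diagonalize in $x$ and reduce \eqref{lvpe} to a family of scalar Volterra equations indexed by the Fourier wavenumber, and then to extract decay from the Penrose condition by a Plancherel argument in $t$. Writing $f(x,v,t)=\sum_{k\neq0}e^{i\alpha_k x}f_k(v,t)$ and $E(x,t)=\sum_{k\neq0}e^{i\alpha_k x}E_k(t)$ with $\alpha_k=\tfrac{2\pi}{T}k$, the Poisson equation in \eqref{lvpe} gives $E_k=\tfrac{i}{\alpha_k}\rho_k$ with $\rho_k(t)=\int f_k(v,t)\,dv$, so the whole estimate \eqref{estimate-linear-integral} reduces to a weighted bound on each $\rho_k$. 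Solving the transport part of \eqref{lvpe} by Duhamel along the free characteristics and integrating in $v$ yields the closed Volterra equation
\[
\rho_k(t)=\hat g_k(\alpha_k t)+\frac{i}{\alpha_k}\int_0^t K_k(t-s)\,\rho_k(s)\,ds,\qquad K_k(\tau)=\int f_0'(v)e^{-i\alpha_k v\tau}\,dv,
\]
where $\hat g_k$ denotes the Fourier transform in $v$ of $g_k=f_k(\cdot,0)$. The free term $\hat g_k(\alpha_k t)$ is precisely the phase-mixing density, and it is the source of all the decay.

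A convenient first step is to rescale time by $\tau=\alpha_k t$. Setting $P_k(\tau)=\rho_k(\tau/\alpha_k)$ and $\Psi_k(\tau)=\hat g_k(\tau)$, and noting that $K_k$ is a dilate of the fixed profile $\Phi=\widehat{f_0'}$, the convolution rescales so that the equation collapses to
\[
P_k(\tau)=\Psi_k(\tau)+\frac{i}{\alpha_k^{2}}(\Phi\ast P_k)(\tau),
\]
in which the only $k$-dependence is the scalar coefficient $\alpha_k^{-2}$. Taking the Laplace transform $\tilde{\,\cdot\,}$ in $\tau$ and substituting $\mu=-ic$, the dispersion (dielectric) function is $\mathcal D_k(\mu)=1-\tfrac{i}{\alpha_k^{2}}\tilde\Phi(\mu)$, which on the slit becomes $\mathcal D_k=1-\alpha_k^{-2}\int\tfrac{f_0'(v)}{v-c}\,dv$; its zeros are governed exactly by the quantities appearing in \eqref{defn-T-0}. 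Since $T<T_0$ forces $\alpha_k^{2}=(2\pi k/T)^{2}>(2\pi/T_0)^{2}\ge\int\tfrac{f_0'}{v-v_i}\,dv$ for every extremum $v_i$ and every $k\neq0$, the reformulated Penrose criterion (Lemma \ref{lemma-penrose}) shows $\mathcal D_k$ has no zeros in $\{\operatorname{Re}\mu\ge0\}$; because $s_0>\tfrac32$ gives $f_0'\in C^{0,\gamma}\cap L^1$, the boundary values $\mathcal D_k(iz)$ exist by the Plemelj formula and $\mathcal D_k(iz)\to1$ as $|z|\to\infty$ and as $|k|\to\infty$. I would package these facts into the key quantitative input: a lower bound $|\mathcal D_k(iz)|\ge\kappa>0$ uniform in $z\in\mathbf R$ and in $k\neq0$.

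Granting this bound, Plancherel in $\tau$ transfers control from the free density to the full density: since $\tilde P_k=\tilde\Psi_k/\mathcal D_k$, the case $s_v=0$ gives $\|P_k\|_{L^2_\tau}\le\kappa^{-1}\|\Psi_k\|_{L^2_\tau}\lesssim\|g_k\|_{L^2_v}$. The time weight is then produced by the phase-mixing change of variables; indeed for $s_v\ge0$ one has $\int_0^\infty\tau^{2s_v}|\Psi_k(\tau)|^2\,d\tau=\int_0^\infty\tau^{2s_v}|\hat g_k(\tau)|^2\,d\tau\le\|g_k\|_{H^{s_v}_v}^2$, so the target single-mode estimate is $\|\tau^{s_v}P_k\|_{L^2_\tau}\lesssim\|g_k\|_{H^{s_v}_v}$ uniformly in $k$. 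Undoing the rescaling one has the exact identity $\|t^{s_v}E_k\|_{L^2_t}=\alpha_k^{-(3/2+s_v)}\|\tau^{s_v}P_k\|_{L^2_\tau}$, and multiplying by $|k|^{3/2+s_x+s_v}$ and summing, the powers of $|k|$ cancel precisely:
\[
\|t^{s_v}E\|_{L^2_tH_x^{3/2+s_x+s_v}}^2=\sum_{k\neq0}|k|^{2(3/2+s_x+s_v)}\|t^{s_v}E_k\|_{L^2_t}^2\lesssim\sum_{k\neq0}|k|^{2s_x}\|g_k\|_{H^{s_v}_v}^2=\|g\|_{H_x^{s_x}H_v^{s_v}}^2,
\]
which is \eqref{estimate-linear-integral}; the final inequality in \eqref{estimate-linear-integral} is the trivial fact that subtracting the $x$-mean only removes the $k=0$ term.

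The two places I expect to be genuinely delicate are the uniform lower bound on $\mathcal D_k$ and the weighted bound on the self-consistent (convolution) part of $P_k$. For the former the issue is uniformity rather than nonvanishing: ruling out $\inf_{k,z}|\mathcal D_k(iz)|=0$ requires the decay $\mathcal D_k(iz)\to1$ to be quantitative at large $|z|$ and large $|k|$, combined with a compactness argument over the finitely many ``dangerous'' low modes where the Penrose input from Lemma \ref{lemma-penrose} is actually used. For the latter, writing $P_k=\Psi_k+R_k\ast\Psi_k$ with resolvent kernel $\tilde R_k=\alpha_k^{-2}\tilde\Phi/\mathcal D_k$, one must commute the weight $\tau^{s_v}$ past the convolution; for $0\le s_v\le1$ the elementary inequality $\tau^{s_v}\le(\tau-\sigma)^{s_v}+\sigma^{s_v}$ on $0\le\sigma\le\tau$ reduces matters to an unweighted $L^1$ bound on $R_k$ (a Paley--Wiener consequence of $|\mathcal D_k|\ge\kappa$) together with a moment bound $\|\tau^{s_v}R_k\|_{L^1}$, the latter costing $s_v$ derivatives of $f_0'$ and hence using the hypothesis $|s_v|\le s_0-1$; the remaining range of $s_v$ I would reach by interpolation and duality. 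The behaviour near $\tau=0$ for negative $s_v$, where $\tau^{s_v}\Psi_k$ must still be square-integrable, is the borderline point that pins down the admissible range $|s_v|\le s_0-1$ and that I expect to require the most care.
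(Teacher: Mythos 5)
Your proposal follows essentially the same route as the paper: Fourier decomposition in $x$, reduction to a single-mode problem, a uniform Penrose lower bound on the dielectric function along the boundary of the right half-plane, Plancherel in the rescaled time variable to trade $H_{v}^{s_{v}}$ regularity of the data for the weight $t^{s_{v}}$, and the identical scaling bookkeeping in $k$ at the end. The difference is one of packaging and of where the one delicate step is carried out. The paper starts from Landau's explicit contour-integral formula for $E_{k}(t)$, pushes the contour to the real axis to write $E_{k}(t)=kA_{k}(kt)$ with $A_{k}$ the Fourier transform of $H_{k}(x)=G_{k}(x+i0)/\left(k^{2}-F(x+i0)\right)$, and then the entire weighted estimate reduces to the single Fourier-side bound $\left\Vert H_{k}\right\Vert _{H^{s_{v}}}\leq Ck^{-2}\left\Vert g_{k}\right\Vert _{H^{s_{v}}}$, obtained from boundedness of the Hilbert transform on $H^{s_{v}}$ together with the product estimate $\left\Vert f_{1}f_{2}\right\Vert _{H^{s_{v}}}\leq C\left\Vert f_{1}\right\Vert _{H^{s_{1}}}\left\Vert f_{2}\right\Vert _{H^{s_{v}}}$ for $s_{1}>\frac{1}{2}$, $\left\vert s_{v}\right\vert \leq s_{1}$, applied with $f_{1}$ built from $F(x+i0)\in H^{s_{0}-1}$; this is exactly where the hypothesis $\left\vert s_{v}\right\vert \leq s_{0}-1$ enters, and it handles positive and negative $s_{v}$ in one stroke. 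The uniform lower bound $\left\vert k^{2}-F(x+i0)\right\vert \geq c_{0}k^{2}$ is obtained in the paper precisely as you anticipate (Penrose nonvanishing on the finitely many dangerous modes, decay of $F(x+i0)$ at infinity, smallness of $F/k^{2}$ for large $k$). Where you genuinely diverge is the self-consistent term: your physical-side resolvent argument with the splitting $\tau^{s_{v}}\leq(\tau-\sigma)^{s_{v}}+\sigma^{s_{v}}$ needs the moment bound $\left\Vert \tau^{s_{v}}R_{k}\right\Vert _{L^{1}}$, and passing from $L^{2}$ to $L^{1}$ costs an extra half-derivative of $f_{0}^{\prime}$, so this route requires roughly $f_{0}^{\prime}\in H^{s_{v}+\frac{1}{2}+}$ and misses the endpoint $s_{v}=s_{0}-1$; the appeal to "interpolation and duality" for $s_{v}<0$ is likewise not spelled out. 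Both defects disappear if you do that one step on the Laplace/Fourier side, which your own setup already provides: $\tilde{P}_{k}=\tilde{\Psi}_{k}/\mathcal{D}_{k}$ with $\tilde{\Psi}_{k}(i\cdot)=G_{k}(\cdot+i0)$ up to constants, and the multiplier estimate above closes the argument for the full range $\left\vert s_{v}\right\vert \leq s_{0}-1$.
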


One may compare this proposition with other smoothing estimates in PDEs. Here
based on the most naive estimate, the initial value $g \in H_{x}^{s_{x}}%
H_{v}^{s_{v}} \subset H_{x, v}^{s_{x} + s_{v}}$ only implies $E(0) \in
H_{x}^{s_{x} +1}$ which is much weaker than $H_{x}^{\frac32 + s_{v} + s_{x}}$
in the above proposition. However, this improved regularity of $E$ may blow up
as $t \to0$.

\begin{proof}
To simplify notations, we assume $T=2\pi$. Let
\[
g\left(  x,v\right)  =\sum_{0\neq k\in\mathbf{Z}}e^{ikx}g_{k}\left(  v\right)
,
\]
then by assumption
\[
\left\Vert g\right\Vert _{H_{x}^{s_{x}}H_{v}^{s_{v}}}=\sum_{0\neq
k\in\mathbf{Z}}\left\vert k\right\vert ^{2s_{x}}\left\Vert g_{k}\right\Vert
_{H^{s_{v}}}^{2}<\infty\text{. }%
\]
Let%
\[
f\left(  x,v,t\right)  =\sum_{k\neq0}e^{ikx}h_{k}\left(  v,t\right)
,\ E\left(  x,t\right)  =\sum_{k\neq0\in\mathbf{Z}}e^{ikx}E_{k}\left(
t\right)  ,
\]
then
\[
E_{k}\left(  t\right)  =-\frac{1}{ik}\int_{\mathbf{R}}h_{k}\left(  v,t\right)
\ dv,\ E_{k}\left(  0\right)  =-\frac{1}{ik}\int_{\mathbf{R}}g_{k}\left(
v\right)  dv.
\]
Below we denote $C$ to be a generic constant depending only on $f_{0}.$When
$k>0,\ $we use the the well-known formula for $E_{k}\left(  t\right)  $
\begin{equation}
E_{k}\left(  t\right)  =\frac{1}{2\pi i}\int_{\sigma-i\infty}^{\sigma+i\infty
}\frac{G_{k}\left(  -p/ik\right)  }{k^{2}-F\left(  -p/ik\right)  }e^{pt}dp,
\label{formula-E_k}%
\end{equation}
where
\[
G_{k}\left(  z\right)  =\int_{-\infty}^{+\infty}\frac{g_{k}\left(  v\right)
}{v-z}dv,\ F\left(  z\right)  =\int_{-\infty}^{+\infty}\frac{f_{0}^{\prime
}\left(  v\right)  }{v-z}dv\text{, }\operatorname{Im}z>0
\]
and $\sigma$ is chosen so that the integrand in (\ref{formula-E_k}) has no
poles for $\operatorname{Re}p>\sigma$. The formula (\ref{formula-E_k}) was
derived in Landau's original 1946 paper (\cite{landau}) by using Laplace
transforms. Here we follow the notations in (\cite{weizner63}). By using the
new variable $z=-p/ik$, we get
\begin{equation}
E_{k}\left(  t\right)  =\frac{k}{2\pi}\int_{\frac{i\sigma}{k}-\infty}%
^{\frac{i\sigma}{k}+\infty}\frac{G_{k}\left(  z\right)  }{k^{2}-F\left(
z\right)  }e^{-ikzt}dz. \label{formula-E_k-2}%
\end{equation}
By assumption $k\geq1=\frac{2\pi}{T}>\frac{2\pi}{T_{0}}$, so by Penrose's
criterion (Lemma \ref{lemma-penrose}), there exist no unstable modes to the
linearized equation with $x-$period $2\pi/k$. Therefore, $k^{2}-F\left(
z\right)  \neq0$ when $\operatorname{Im}z>0$. Moreover, by the proof of Lemma
\ref{lemma-penrose}, under the condition $k>\frac{2\pi}{T_{0}},\ k^{2}%
-F\left(  x+i0\right)  \neq0$ for any $x\in\mathbf{R}$. It is also easy to see
that $F\left(  x+i0\right)  \rightarrow0$ when $x\rightarrow\infty$. So there
exists $c_{0}>0$, such that
\begin{equation}
\left\vert k^{2}-F\left(  x+i0\right)  \right\vert \geq c_{0}k^{2},\ \text{for
any }x\in\mathbf{R}\text{ and }k\mathbf{.} \label{estimate-bottom}%
\end{equation}
Note that for $z=i\sigma+x,\ $when$\ \sigma\rightarrow0+,\ $by (\ref{plemelj}%
),
\[
G_{k}\left(  z\right)  \rightarrow G_{k}\left(  x+i0\right)  =P\int
_{\mathbf{R}}\frac{g_{k}\left(  v\right)  }{v-x}dv+i\pi g_{k}\left(  x\right)
=\mathcal{H}g_{k}+i\pi g_{k},
\]
and
\[
F\left(  z\right)  \rightarrow F\left(  x+i0\right)  =P\int_{\mathbf{R}}%
\frac{f_{0}^{\prime}\left(  v\right)  }{v-x}dv+i\pi f_{0}^{\prime}\left(
x\right)  =\mathcal{H}f_{0}^{\prime}+i\pi f_{0}^{\prime},
\]
where $\mathcal{H}$ is the Hilbert transform. So letting $\sigma\rightarrow
0+$, from (\ref{formula-E_k-2}), we have
\begin{equation}
E_{k}\left(  t\right)  =\frac{k}{2\pi}\int_{\mathbf{R}}\frac{G_{k}\left(
x+i0\right)  }{k^{2}-F\left(  x+i0\right)  }e^{-ikxt}dx.
\label{formula-E_k-limit}%
\end{equation}
Let
\[
A_{k}\left(  t\right)  =\frac{1}{2\pi}\int_{\mathbf{R}}\frac{G_{k}\left(
x+i0\right)  }{k^{2}-F\left(  x+i0\right)  }e^{-ixt}dx
\]
be the Fourier transform of
\[
H_{k}\left(  x\right)  =\frac{G_{k}\left(  x+i0\right)  }{k^{2}-F\left(
x+i0\right)  },
\]
then $E_{k}\left(  t\right)  =kA_{k}\left(  kt\right)  $. Since $\mathcal{H}:$
$H^{s}\rightarrow H^{s}\ $is bounded for any $s\in\mathbf{R}$,
\[
\left\Vert G_{k}\left(  x+i0\right)  \right\Vert _{H^{s_{v}}}\leq C\left\Vert
g_{k}\right\Vert _{H_{v}^{s_{v}}}, \qquad\left\vert F\left(  x+i0\right)
\right\Vert _{H^{s_{0}-1}}\leq C\left\Vert f_{0}\right\Vert _{H_{v}^{s_{0}}}.
\]
By (\ref{estimate-bottom}) and the inequality
\[
\left\Vert f_{1}f_{2}\right\Vert _{H^{s}}\leq C_{s,s_{1}}\left\Vert
f_{1}\right\Vert _{H^{s_{1}}}\left\Vert f_{2}\right\Vert _{H^{s}},\ \text{if
}s_{1}>\frac{1}{2},\ \left\vert s\right\vert \leq s_{1},
\]
we have
\begin{align*}
\left\Vert H_{k}\right\Vert _{H^{s_{v}}}  &  \leq\frac{1}{k^{2}}\left\Vert
G_{k}\left(  x+i0\right)  \right\Vert _{H^{s_{v}}}+\frac{1}{k^{4}}\left\Vert
G_{k}\left(  x+i0\right)  \frac{F\left(  x+i0\right)  }{1-F\left(
x+i0\right)  /k^{2}}\right\Vert _{H^{s_{v}}}\\
&  \leq\frac{C}{k^{2}}\left\Vert g_{k}\right\Vert _{H_{v}^{s_{v}}}\left(
1+\frac{1}{k^{2}}\left\Vert \frac{F\left(  x+i0\right)  }{1-F\left(
x+i0\right)  /k^{2}}\right\Vert _{H^{s_{0}-1}}\right)  \leq\frac{C^{\prime}%
}{k^{2}}\left\Vert g_{k}\right\Vert _{H_{v}^{s_{v}}}.
\end{align*}
where $C^{\prime}$ depends on $f_{0}$ but not $k$. In the above,
the second inequality holds since the estimates
\[
\left\vert 1-F\left(  x+i0\right)  /k^{2}\right\vert \geq c_{0}\text{ and
}\left\Vert F\left(  x+i0\right)  \right\Vert _{H^{s_{0}-1}}\leq C\left\Vert
f_{0}\right\Vert _{H^{s_{0}}},
\]
imply
\begin{equation}
\left\Vert \frac{F\left(  x+i0\right)  }{1-F\left(  x+i0\right)  /k^{2}%
}\right\Vert _{H^{s_{0}-1}}< C\left\Vert f_{0}\right\Vert _{H^{s_{0}}}\text{ }
\label{inequality-s_0-1}%
\end{equation}
through direct verification where, for $0<s_{0} -1<1$, one needs to use
the equivalent characterization of $W^{s,p}\left(  \mathbf{R}^{n}\right)  $
when $0<s<1,\ p>1$ (See \cite[Lemma 35.2]{tartar-sobolev}):%
\[
W^{s,p}\left(  \mathbf{R}^{n}\right)  =\left\{  u\in L^{p}\left(
\mathbf{R}^{n}\right)  \ |\ \int\int_{\mathbf{R}^{n}\times\mathbf{R}^{n}}%
\frac{\left\vert u\left(  x\right)  -u\left(  y\right)  \right\vert ^{p}%
}{\left\vert x-y\right\vert ^{n+sp}}dxdy<\infty\right\}  .
\]
So $\ $%
\[
\int_{\mathbf{R}}\left\vert t\right\vert ^{2s_{v}}\left\vert A_{k}\left(
t\right)  \right\vert ^{2}dt\leq\left\Vert H_{k}\right\Vert _{H^{s_{v}}}%
^{2}\leq\frac{C}{k^{4}}\left\Vert g_{k}\right\Vert _{H_{v}^{s_{v}}}^{2}%
\]
and
\begin{align*}
\left\Vert t^{s_{v}}E_{k}\left(  t\right)  \right\Vert _{L^{2}}^{2}  &
=\int_{\mathbf{R}}\left\vert t\right\vert ^{2s_{v}}\left\vert E_{k}\left(
t\right)  \right\vert ^{2}dt=\int\left\vert t\right\vert ^{2s_{v}}%
k^{2}\left\vert A_{k}\left(  kt\right)  \right\vert ^{2}dt\\
&  =k^{1-2s_{v}}\int_{\mathbf{R}}\left\vert t\right\vert ^{2s_{v}}\left\vert
A_{k}\left(  t\right)  \right\vert ^{2}dt\leq Ck^{-3-2s_{v}}\left\Vert
g_{k}\right\Vert _{H_{v}^{s_{v}}}^{2}.
\end{align*}
For $k<0$, the same estimate
\[
\left\Vert t^{s_{v}}E_{k}\left(  t\right)  \right\Vert _{L^{2}}^{2}\leq
C\left\vert k\right\vert ^{-3-2s_{v}}\left\Vert g_{k}\right\Vert
_{H_{v}^{s_{v}}}^{2},
\]
follows by taking the complex conjugate of the $k>0$ case. Thus%
\begin{align*}
\left\Vert t^{s_{v}}E\left(  x,t\right)  \right\Vert _{L_{t}^{2}H_{x}%
^{\frac{3}{2}+s_{x}+s_{v}}}^{2}  &  =\sum_{k\neq0}\left\vert k\right\vert
^{3+2s_{v}+2s_{x}}\left\Vert t^{s_{v}}E_{k}\left(  t\right)  \right\Vert
_{L^{2}}^{2}\\
&  \leq C\sum_{k\neq0}\left\vert k\right\vert ^{2s_{x}}\left\Vert
g_{k}\right\Vert _{H_{v}^{s_{v}}}^{2}=C\left\Vert g\right\Vert _{H_{x}^{s_{x}%
}H_{v}^{s_{v}}}^{2}\text{.}%
\end{align*}
This finishes the proof.
\end{proof}

The decay estimate in Proposition \ref{prop-linear-integral-estimate} is in
the integral form. With some additional assumption on the initial data, we can
obtain the pointwise decay estimate.

\begin{proposition}
\label{prop-linear} Assume $f_{0}\left(  v\right)  \in H^{s_{0}}\left(
\mathbf{R}\right)  $ $\left(  s_{0}>\frac{3}{2}\right)  \ $and let
$0<T_{0}\leq+\infty$ be defined by (\ref{defn-T-0}). Let $\left(  f\left(
x,v,t\right)  ,E\left(  x,t\right)  \right)  $ be a solution of (\ref{lvpe})
with $x-$period $T<T_{0}\ $and
\[
g\left(  x,v\right)  =f\left(  x,v,0\right)  -\frac{1}{T}\int_{0}^{T}f\left(
x,v,0\right)  dx.
\]
If $g\in H_{x}^{s_{x}}H_{v}^{s_{v}}$, $vg\in H_{x}^{s_{x}^{\prime}}%
H_{v}^{s_{v}^{\prime}}$ with $s_{v}>-\frac{1}{2},\ s_{v}+s_{v}^{\prime}\geq0$
and $\max\left\{  \left\vert s_{v}\right\vert ,\left\vert s_{v}^{\prime
}\right\vert \right\}  \leq s_{0}-1$, then
\[
\left\Vert E\right\Vert _{H^{s}}\left(  t\right)  =o\left(  t^{-\frac
{s_{v}+s_{v}^{\prime}}{2}}\right)  \text{, when }t\rightarrow\infty,
\]
where
\begin{equation}
s=\min\left\{  \frac{3}{2}+s_{x}+s_{v},\frac{1}{2}+s_{x}^{\prime}%
+s_{v}^{\prime}\right\}  . \label{defn-s}%
\end{equation}

\end{proposition}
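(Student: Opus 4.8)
The plan is to upgrade the time-weighted $L^2_t$ estimate of Proposition \ref{prop-linear-integral-estimate} to a pointwise-in-$t$ decay statement by producing a companion estimate for $\partial_t E$ out of the extra hypothesis $vg\in H^{s_x'}_xH^{s_v'}_v$, and then running a tail/Cauchy--Schwarz argument mode by mode in $x$. Working with the Fourier expansion $E(x,t)=\sum_{k\ne0}e^{i\frac{2\pi}{T}kx}E_k(t)$, I recall from the proof of Proposition \ref{prop-linear-integral-estimate} the representation $E_k(t)=kA_k(kt)$, where $A_k$ is the Fourier transform of $H_k(x)=G_k(x+i0)/(k^2-F(x+i0))$ and $G_k(z)=\int g_k(v)/(v-z)\,dv$. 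The hypothesis $g\in H^{s_x}_xH^{s_v}_v$ already gives, through that proposition,
\[
\big\|t^{s_v}E\big\|_{L^2_tH^{\sigma_A}_x}\le C\|g\|_{H^{s_x}_xH^{s_v}_v},\qquad \sigma_A=\tfrac32+s_x+s_v .
\]

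The second ingredient is the analogous bound for $\partial_tE$. The key observation is the algebraic identity for the Cauchy transform: if $\widetilde G_k(z)=\int \frac{v g_k(v)}{v-z}\,dv$ is the transform attached to $vg_k$, then $\widetilde G_k(z)=\int g_k\,dv+zG_k(z)$, so that $xG_k(x+i0)=\widetilde G_k(x+i0)-\int g_k\,dv$. Since differentiating the representation in $t$ multiplies the integrand of $A_k$ by $-ikx$, this yields $\partial_tE_k(t)=-ik\,\widetilde E_k(t)+R_k(t)$, where $\widetilde E_k$ is the field generated by the datum $vg$ and $R_k$ is the contribution of the constant $\int g_k\,dv$. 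Applying Proposition \ref{prop-linear-integral-estimate} to $vg$, and using that $E_k'=-ik\widetilde E_k$ means $\|\partial_tE\|_{H^{\sigma}_x}=\|\widetilde E\|_{H^{\sigma+1}_x}$ (with $\sigma_B+1=\tfrac32+s_x'+s_v'$), gives
\[
\big\|t^{s_v'}\partial_tE\big\|_{L^2_tH^{\sigma_B}_x}\le C\big(\|vg\|_{H^{s_x'}_xH^{s_v'}_v}+\|g\|_{H^{s_x}_xH^{s_v}_v}\big),\qquad \sigma_B=\tfrac12+s_x'+s_v' .
\]
The remainder comes from $\int g_k\,dv/(k^2-F)$; since $F(x+i0)\to0$ as $x\to\infty$, it splits into a term constant in $x$, whose transform is a Dirac mass at $t=0$ and hence irrelevant for the decay as $t\to\infty$, plus a lower-order $H^{s_0-1}$ piece. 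Here one controls $\big|\int g_k\,dv\big|\le\|g_k\|_{L^1_v}\le C(\|g_k\|_{H^{s_v}_v}+\|vg_k\|_{H^{s_v'}_v})$, which is precisely where the restrictions $s_v>-\tfrac12$ and $s_v+s_v'\ge0$ are used.

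With these two integral estimates in hand, the pointwise decay follows by a tail argument. By Riemann--Lebesgue each mode satisfies $E_k(t)\to0$, so $|E_k(t)|^2=-2\,\mathrm{Re}\int_t^\infty \partial_sE_k(s)\,\overline{E_k(s)}\,ds$. Using $s_v+s_v'\ge0$ to bound $t^{s_v+s_v'}\le s^{s_v+s_v'}$ for $s\ge t$, then Cauchy--Schwarz in $s$, and finally summing against $|k|^{2s}$ while placing the full $x$-weight on whichever factor carries the larger Sobolev exponent (so that only $s\le\min\{\sigma_A,\sigma_B\}$ is needed, which is exactly (\ref{defn-s})), I arrive at
\[
t^{s_v+s_v'}\|E\|_{H^s}^2(t)\le C\Big(\int_t^\infty s^{2s_v'}\|\partial_sE\|_{H^{\sigma_B}_x}^2\,ds\Big)^{\!1/2}\Big(\int_t^\infty s^{2s_v}\|E\|_{H^{\sigma_A}_x}^2\,ds\Big)^{\!1/2}.
\]
Both factors are tails of convergent integrals, so they tend to $0$, giving $\|E\|_{H^s}(t)=o\big(t^{-(s_v+s_v')/2}\big)$.

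I expect the main obstacle to be the companion derivative estimate: cleanly isolating the constant term $\int g_k\,dv$ and confirming it only affects $t=0$, and carrying out the same homogeneous-versus-inhomogeneous Sobolev bookkeeping for possibly negative $s_v,s_v'$ that already underlies Proposition \ref{prop-linear-integral-estimate}, in particular the multiplier bound for $F/(1-F/k^2)$ and the weighted Plancherel identity relating $\int|t|^{2s_v'}|A_k'(t)|^2\,dt$ to a (homogeneous) $H^{s_v'}$ norm of $xH_k$.
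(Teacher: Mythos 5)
Your proposal is essentially the paper's own proof. The paper likewise obtains the pointwise decay by time-differentiation: it observes that $\left(  f_{t},E_{t}\right)  $ solves (\ref{lvpe}) with initial datum $-\partial_{x}\left(  vg\right)  +E\left(  x,0\right)  f_{0}^{\prime}\left(  v\right)  $, applies Proposition \ref{prop-linear-integral-estimate} to $E$ and to each piece of $E_{t}$, and closes with exactly your tail Cauchy--Schwarz argument, written there as $\left\vert \left\Vert E\right\Vert _{H_{x}^{s}}^{2}\left(  t_{2}\right)  -\left\Vert E\right\Vert _{H_{x}^{s}}^{2}\left(  t_{1}\right)  \right\vert =\left\vert \int_{t_{1}}^{t_{2}}\left\langle E,E_{t}\right\rangle _{H_{x}^{s}}dt\right\vert $; your symbol-level identity $xG_{k}=\widetilde{G}_{k}-\int g_{k}\,dv$ is the Fourier-side version of the same decomposition. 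Two small repairs are needed. First, the chain $\left\vert \int g_{k}\,dv\right\vert \leq\left\Vert g_{k}\right\Vert _{L_{v}^{1}}\leq C\left(  \left\Vert g_{k}\right\Vert _{H^{s_{v}}}+\left\Vert vg_{k}\right\Vert _{H^{s_{v}^{\prime}}}\right)  $ fails when $s_{v}<0$, since $g_{k}$ need not then be locally integrable; the paper instead splits $\int g_{k}\,dv=\int g_{k}\sigma\,dv+\int g_{k}\left(  1-\sigma\right)  dv$ with a smooth cutoff $\sigma$ and pairs the two pieces against $\sigma\in H^{-s_{v}}$ and $\left(  1-\sigma\right)  /v\in H^{-s_{v}^{\prime}}$, which is the correct bookkeeping for negative exponents. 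Second, Riemann--Lebesgue does not directly yield $E_{k}\left(  t\right)  \rightarrow0$ when $s_{v}<0$ (the function $H_{k}$ need not be in $L^{1}$); instead, your convergent tail integral already shows $\left\vert E_{k}\left(  t\right)  \right\vert ^{2}$ is Cauchy as $t\rightarrow\infty$, and the limit must vanish because $\int t^{2s_{v}}\left\vert E_{k}\right\vert ^{2}dt<\infty$ with $s_{v}>-\frac{1}{2}$ --- this, rather than the bound on $\int g_{k}\,dv$, is where the hypothesis $s_{v}>-\frac{1}{2}$ actually enters.
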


\begin{corollary}
\label{cor-decay}Assume $f_{0}\left(  v\right)  \in H^{s_{0}}\left(
\mathbf{R}\right)  $ $\left(  s_{0}>\frac{3}{2}\right)  $ and $T<T_{0}$.

(i) If $g\in H_{x}^{-\frac{3}{2}}L_{v}^{2}$ and $vg\in H_{x}^{-\frac{1}{2}%
}L_{v}^{2}$, then $\left\Vert E\right\Vert _{L_{x}^{2}}\left(  t\right)
\rightarrow0$ when $t\rightarrow\infty$.

(ii) If $g,vg\in H_{x,v}^{k}$ with $k\leq s_{0}-1,\ $then $\left\Vert
E\right\Vert _{H^{k+\frac{1}{2}}}\left(  t\right)  =o\left(  t^{-k}\right)  $
when $t\rightarrow\infty$.
\end{corollary}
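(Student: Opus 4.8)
The plan is to obtain both parts as direct specializations of Proposition \ref{prop-linear}; no new estimate beyond it is needed, only a judicious choice of the four exponents $(s_x,s_v,s_x',s_v')$ together with one elementary Sobolev inclusion in part (ii).

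For part (i), I would read off from the hypotheses $g\in H_x^{-3/2}L_v^2$ and $vg\in H_x^{-1/2}L_v^2$ the values $s_x=-\frac32,\ s_v=0$ and $s_x'=-\frac12,\ s_v'=0$. The admissibility constraints of Proposition \ref{prop-linear} are then immediate: $s_v=0>-\frac12$, $s_v+s_v'=0\ge 0$, and $\max\{|s_v|,|s_v'|\}=0\le s_0-1$ because $s_0>\frac32$. Substituting into (\ref{defn-s}) gives $s=\min\{\frac32+s_x+s_v,\ \frac12+s_x'+s_v'\}=\min\{0,0\}=0$, while the decay exponent is $\frac{s_v+s_v'}{2}=0$. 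Hence the proposition yields $\|E\|_{L_x^2}(t)=\|E\|_{H^0}(t)=o(1)\to 0$, which is exactly part (i).

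For part (ii), the one substantive step is the continuous embedding $H_{x,v}^k\hookrightarrow L_x^2H_v^k=H_x^0H_v^k$ for $k\ge 0$. Writing $g=\sum_{n}e^{inx}g_n(v)$ (period normalized to $2\pi$), this is transparent on the Fourier side from $(1+\xi^2)^k\le(1+n^2+\xi^2)^k$, which gives $\|g\|_{L_x^2H_v^k}\le\|g\|_{H_{x,v}^k}$ and likewise for $vg$. I may therefore take $s_x=s_x'=0$ and $s_v=s_v'=k$. The hypotheses hold since $s_v=k\ge 0>-\frac12$, $s_v+s_v'=2k\ge 0$, and $\max\{|s_v|,|s_v'|\}=k\le s_0-1$ by assumption. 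Then (\ref{defn-s}) gives $s=\min\{\frac32+k,\ \frac12+k\}=k+\frac12$ and the decay exponent is $\frac{s_v+s_v'}{2}=k$, so Proposition \ref{prop-linear} delivers $\|E\|_{H^{k+1/2}}(t)=o(t^{-k})$, proving part (ii).

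Because everything reduces to bookkeeping of indices plus one trivial frequency-space inequality, I do not anticipate any genuine obstacle. The only points requiring care are confirming that the admissibility conditions $s_v>-\frac12$, $s_v+s_v'\ge 0$, and $\max\{|s_v|,|s_v'|\}\le s_0-1$ are met by the specialized exponents (they are, precisely because $s_0>\frac32$ and, in (ii), $k\ge 0$), and verifying that the isotropic norm $H_{x,v}^k$ dominates the anisotropic norm $H_x^0H_v^k$ used to invoke the proposition.
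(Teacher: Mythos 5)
Your proposal is correct and matches the paper's intent exactly: the corollary is stated without a separate proof precisely because it is the specialization of Proposition \ref{prop-linear} to $(s_x,s_v,s_x',s_v')=(-\tfrac32,0,-\tfrac12,0)$ for part (i) and $(0,k,0,k)$ for part (ii), with the elementary frequency-side inequality $(1+\xi^2)^k\le(1+n^2+\xi^2)^k$ giving $H_{x,v}^k\hookrightarrow L_x^2H_v^k$ for $k\ge0$. Your verification of the admissibility conditions and the computation of $s$ from (\ref{defn-s}) and of the decay exponent $\tfrac{s_v+s_v'}{2}$ are all accurate.
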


Proposition \ref{prop-linear} and its Corollary shows that linear damping is
true for initial data of very low regularity, even in certain negative Sobolev
spaces. It also shows that the decay rate is mainly determined by the
regularity in $v$, although the regularity in $x$ affects the norm of
electrical field that decays.

\begin{proof}
[Proof of Proposition \ref{prop-linear}]First we derive a formula for
$E_{t}\left(  t\right)  $. We notice that $\left(  f_{t},E_{t}\right)  $
satisfies the linear system (\ref{lvpe}) and
\begin{align*}
f_{t}\left(  x,v,0\right)   &  =-v\partial_{x}f\left(  x,v,0\right)  +E\left(
x,0\right)  f_{0}^{\prime}\left(  v\right) \\
&  =\sum_{k\neq0}e^{ikx}\left(  -ikvg_{k}\left(  v\right)  -\frac{1}{ik}%
\int_{\mathbf{R}}g_{k}\left(  v\right)  dv\ f_{0}^{\prime}\left(  v\right)
\right)  =\sum_{j=1}^{3}\tilde{g}^{j}\left(  x,v\right)  ,
\end{align*}
\qquad where
\begin{align*}
\tilde{g}^{1}\left(  x,v\right)   &  =-\partial_{x}\left(  vg\left(
x,v\right)  \right)  ,\ \\
\tilde{g}^{2}\left(  x,v\right)   &  =-\frac{1}{ik}\sum_{k\neq0}e^{ikx}%
\int_{\mathbf{R}}g_{k}\left(  v\right)  \sigma\left(  v\right)  dv\ f_{0}%
^{\prime}\left(  v\right)  =\sum_{k\neq0}e^{ikx}\tilde{g}_{k}^{2}\left(
v\right)  ,\\
\tilde{g}^{3}\left(  x,v\right)   &  =-\frac{1}{ik}\sum_{k\neq0}e^{ikx}%
\int_{\mathbf{R}}g_{k}\left(  v\right)  \left(  1-\sigma\left(  v\right)
\right)  dv\ f_{0}^{\prime}\left(  v\right)  =\sum_{k\neq0}e^{ikx}\tilde
{g}_{k}^{3}\left(  v\right)  ,
\end{align*}
and$\ \sigma\left(  v\right)  $ is the cut-off function defined by
(\ref{cut-off}). Then $\tilde{g}_{1}\in H_{x}^{s_{x}^{\prime}-1}H_{v}%
^{s_{v}^{\prime}},\ \tilde{g}_{2}\in H_{x}^{s_{x}+1}H_{v}^{s_{0}-1},$
$\tilde{g}_{3}\in H_{x}^{s_{x}^{\prime}+1}H_{v}^{s_{0}-1}$, and
\[
\left\Vert \tilde{g}_{1}\right\Vert _{H_{x}^{s_{x}^{\prime}-1}H_{v}%
^{s_{v}^{\prime}}}=\left\Vert vg\right\Vert _{H_{x}^{s_{x}^{\prime}}%
H_{v}^{s_{v}^{\prime}}}%
\]%
\begin{align*}
\left\Vert \tilde{g}_{2}\right\Vert _{H_{x}^{s_{x}+1}H_{v}^{s_{0}-1}}^{2}  &
=\sum_{k}\left\vert k\right\vert ^{2s_{x}}\left\Vert \int_{\mathbf{R}}%
g_{k}\left(  v\right)  \sigma\left(  v\right)  dv\ f_{0}^{\prime}\right\Vert
_{H_{v}^{s_{0}-1}}^{2}\\
&  \leq\sum_{k}\left\vert k\right\vert ^{2s_{x}}\left\Vert g_{k}\right\Vert
_{H_{v}^{s_{v}}}^{2}\left\Vert \sigma\left(  v\right)  \right\Vert
_{H_{v}^{-s_{v}}}^{2}\left\Vert f_{0}\right\Vert _{H_{v}^{s_{0}}}^{2}\leq
C\left\Vert g\right\Vert _{H_{x}^{s_{x}}H_{v}^{s_{v}}}^{2},
\end{align*}%
\[
\left\Vert \tilde{g}_{3}\right\Vert _{H_{x}^{s_{x}^{\prime}+1}H_{v}^{s_{0}-1}%
}^{2}\leq\sum_{k}\left\vert k\right\vert ^{2s_{x}^{\prime}}\left\Vert
vg_{k}\right\Vert _{H_{v}^{s_{v}^{\prime}}}^{2}\left\Vert \frac{1-\sigma
\left(  v\right)  }{v}\right\Vert _{H_{v}^{-s_{v}^{\prime}}}^{2}\left\Vert
f_{0}\right\Vert _{H_{v}^{s_{0}}}^{2}\leq C\left\Vert vg\right\Vert
_{H_{x}^{s_{x}^{\prime}}H_{v}^{s_{v}^{\prime}}}^{2}.
\]
Correspondingly, we decompose
\[
\left(  f_{t},E_{t}\right)  =\sum_{i=1}^{3}\left(  f_{t}^{i},E_{t}^{i}\right)
\]
with $\left(  f_{t}^{i},E_{t}^{i}\right)  $ being the solution of (\ref{lvpe})
with initial data $f_{t}^{i}\left(  t=0\right)  =\tilde{g}_{i}\left(
x,v\right)  $. Then by
Proposition \label{prop-linear-integral-estimate}
we have
\[
\left\Vert t^{s_{v}^{\prime}}E_{t}^{1}\right\Vert _{L_{t}^{2}H_{x}^{\frac
{1}{2}+s_{x}^{\prime}+s_{v}^{\prime}}}\leq C\left\Vert vg\right\Vert
_{H_{x}^{s_{x}^{\prime}}H_{v}^{s_{v}^{\prime}}},\ \ \ \left\Vert t^{s_{0}%
-1}E_{t}^{2}\right\Vert _{L_{t}^{2}H_{x}^{\frac{5}{2}+s_{x}+s_{0}-1}}\leq
C\left\Vert g\right\Vert _{H_{x}^{s_{x}}H_{v}^{s_{v}}}%
\]
and%
\[
\left\Vert t^{s_{0}-1}E_{t}^{3}\right\Vert _{L_{t}^{2}H_{x}^{\frac{5}{2}%
+s_{x}^{\prime}+s_{0}-1}}\leq C\left\Vert vg\right\Vert _{H_{x}^{s_{x}%
^{\prime}}H_{v}^{s_{v}^{\prime}}}.
\]
For any $t_{2}>t_{1}$ sufficiently large and $s$ defined by (\ref{defn-s}), we
have
\begin{align*}
&  \left\vert \left\Vert E\right\Vert _{H_{x}^{s}}^{2}\left(  t_{2}\right)
-\left\Vert E\right\Vert _{H_{x}^{s}}^{2}\left(  t_{1}\right)  \right\vert \\
&  =\left\vert \int_{t_{1}}^{t_{2}}\left\langle E\left(  t\right)
,E_{t}\left(  t\right)  \right\rangle _{H_{x}^{s}}dt\right\vert \leq
\int_{t_{1}}^{t_{2}}\left\Vert E\left(  t\right)  \right\Vert _{H_{x}^{s}%
}\left(  \sum_{i=1}^{3}\left\Vert E_{t}^{i}\left(  t\right)  \right\Vert
_{H_{x}^{s}}\right)  dt\\
&  \leq t_{1}^{-s_{v}-s_{v}^{\prime}}\int_{t_{1}}^{t_{2}}\left\Vert t^{s_{v}%
}E\left(  t\right)  \right\Vert _{H_{x}^{s}}\left\Vert t^{s_{v}^{\prime}}%
E_{t}^{1}\right\Vert _{H_{x}^{\frac{1}{2}+s_{x}^{\prime}+s_{v}^{\prime}}}dt\\
&  \ \ \ \ \ \ \ \ +t_{1}^{-s_{v}-\left(  s_{0}-1\right)  }\int_{t_{1}}%
^{t_{2}}\left\Vert t^{s_{v}}E\left(  t\right)  \right\Vert _{H_{x}^{s}%
}\left\Vert t^{s_{0}-1}E_{t}^{2}\right\Vert _{H_{x}^{\frac{5}{2}+s_{x}%
+s_{0}-1}}dt\\
\ \ \  &  \ \ \ \ \ \ \ \ +t_{1}^{-s_{v}-\left(  s_{0}-1\right)  }\int_{t_{1}%
}^{t_{2}}\left\Vert t^{s_{v}}E\left(  t\right)  \right\Vert _{H_{x}^{s}%
}\left\Vert t^{s_{0}-1}E_{t}^{3}\right\Vert dt\\
&  \leq t_{1}^{-s_{v}-s_{v}^{\prime}}\left\Vert t^{s_{v}}E\left(  x,t\right)
\right\Vert _{L_{t}^{2}\left(  t_{1},t_{2}\right)  H_{x}^{\frac{3}{2}%
+s_{x}+s_{v}}}\\
&  \cdot\left(  \left\Vert t^{s_{v}^{\prime}}E_{t}^{1}\right\Vert _{L_{t}%
^{2}\left(  t_{1},t_{2}\right)  H_{x}^{\frac{1}{2}+s_{x}^{\prime}%
+s_{v}^{\prime}}}+\left\Vert t^{s_{0}-1}E_{t}^{2}\right\Vert _{L_{t}%
^{2}\left(  t_{1},t_{2}\right)  H_{x}^{\frac{5}{2}+s_{x}+s_{0}-1}}+\left\Vert
t^{s_{0}-1}E_{t}^{3}\right\Vert _{L_{t}^{2}\left(  t_{1},t_{2}\right)
H_{x}^{\frac{5}{2}+s_{x}^{\prime}+s_{0}-1}}\right)  .
\end{align*}
So $\left\{  \left\Vert E\right\Vert _{H_{x}^{s}}^{2}\left(  t\right)
\right\}  _{t\geq0}$ is a Cauchy sequence, thus $\lim_{t\rightarrow\infty
}\left\Vert E\right\Vert _{H_{x}^{s}}^{2}\left(  t\right)  $ exists and must
be zero since $\left\Vert t^{s_{v}}E\right\Vert _{L_{t}^{2}H_{x}^{s}}%
^{2}<\infty$ with $s_{v}>-\frac{1}{2}$. By fixing $t_{1}$ and letting
$t_{2}\rightarrow\infty$ in the above computation, it follows that
\[
\left\Vert E\right\Vert _{H_{x}^{s}}^{2}\left(  t_{1}\right)  =o\left(
t_{1}^{-s_{v}-s_{v}^{\prime}}\right)  .
\]
This finishes the proof.
\end{proof}

\begin{remark}
The integral decay estimate in Proposition \ref{prop-linear-integral-estimate}
is optimal and the pointwise decay estimate in Proposition \ref{prop-linear}
is close to be optimal. Intuitively, the integral estimate
(\ref{estimate-linear-integral}) suggests that
\begin{equation}
\left\Vert E\left(  x,t\right)  \right\Vert _{H_{x}^{\frac{3}{2}+s_{x}+s_{v}}%
}=o\left(  t^{-\left(  s_{v}+\frac{1}{2}\right)  }\right)  .
\label{estimate-decay-intuitive}%
\end{equation}
In \cite{weizner63}, the single-mode solution $e^{ikx}\left(  f\left(
v,t\right)  ,E\left(  t\right)  \right)  $ with initial profile
\[
f\left(  v,0\right)  =g\left(  v\right)  =\left\{
\begin{array}
[c]{cc}%
\left(  v-\alpha\right)  ^{2}e^{-\left(  v-\alpha\right)  ^{2}} & v\geq
\alpha\\
0 & v\leq\alpha
\end{array}
\right.  ,\ \alpha\text{ is arbitrary constant,}%
\]
was calculated explicitly for the linearized problem at Maxwellian, and the
decay rate for $\left\vert E\left(  t\right)  \right\vert $ was found to be
$O\left(  t^{-3}\right)  $. Note that $g\left(  v\right)  ,vg\in H^{2}$ and
$g^{\prime\prime\prime},\left(  vg\right)  ^{\prime\prime\prime}$ are delta
functions which belong to $H^{-\left(  \frac{1}{2}+\varepsilon\right)  }$ for
any $\varepsilon>0,~$\ and thus $g\left(  v\right)  ,vg\in H^{\frac{5}%
{2}-\varepsilon}$. So Proposition \ref{prop-linear-integral-estimate} suggests
a decay rate $o\left(  t^{-\left(  3-\varepsilon\right)  }\right)  \ $in the
integral form and Corollary \ref{cor-decay} (ii) yields a pointwise decay rate
$o\left(  t^{-\frac{5}{2}+\varepsilon}\right)  $. In \cite[pp. 188-189]%
{akheizer-et}, the authors made a more general claim about the decay rate of
single mode solutions: for initial profile $g\left(  v\right)  $ with $\left(
n+1\right)  -$th derivative being $\delta-$function like, the decay rate of
$\left\vert E\left(  t\right)  \right\vert $ is $O\left(  t^{-\left(
n+1\right)  }\right)  $. In such cases, our results give the decay rates
$o\left(  t^{-\left(  n-\varepsilon\right)  }\right)  $ in the integral form
and $o\left(  t^{-\left(  n+\frac{1}{2}-\varepsilon\right)  }\right)  $ pointwise.

In Theorem \ref{thm-invariant}, we use the integral estimate
(\ref{estimate-linear-integral}) to prove that $H^{\frac{3}{2}}$ is the
critical regularity for existence or nonexistence of nontrivial invariant
structures near stable homogeneous states. This again suggests that the decay
estimate in Proposition \ref{prop-linear-integral-estimate} is optimal.
\end{remark}

\begin{remark}
The linear decay result is also true for initial data in $L^{p}$ space. For
simplicity, we consider a single mode solution%
\begin{equation}
\left(  f\left(  x,v,t\right)  ,E\left(  x,t\right)  \right)  =e^{ikx}\left(
h\left(  v,t\right)  ,E\left(  t\right)  \right)  \label{single-mode}%
\end{equation}
to (\ref{lvpe}) with $h\left(  v,0\right)  =g\left(  v\right)  $. Assume
$f_{0}\left(  v\right)  \in L^{1}\left(  \mathbf{R}\right)  \cap W^{2,p_{0}%
}\left(  \mathbf{R}\right)  $ $\left(  p_{0}>1\right)  $ and $0<T_{0}%
\leq+\infty$ be defined by (\ref{defn-T-0}). We have the following result: If
$T=\frac{2\pi}{k}<T_{0}$ and $g\left(  v\right)  \in L^{p}\ \left(
p>1\right)  ,\ v^{2}g\in L^{1}$, then $\left\vert E\left(  t\right)
\right\vert \rightarrow0$ when $t\rightarrow+\infty$. We prove it briefly
below. Since $g\in L^{p},v^{2}g\in L^{1}$, so
\[
\left\Vert g\right\Vert _{L^{1}\left(  \mathbf{R}\right)  }\leq\int
_{\left\vert v\right\vert \leq1}\left\vert g\right\vert \ dv+\int_{\left\vert
v\right\vert \geq1}\left\vert g\right\vert \ dv\leq2^{1/p^{\prime}}\left\Vert
g\right\Vert _{L^{p}}+\left\Vert v^{2}g\right\Vert _{L^{1}}<\infty,
\]
and%
\[
\left\Vert vg\right\Vert _{L^{q}}\leq\left\Vert v\left\vert g\right\vert
^{\frac{1}{2}}\right\Vert _{L^{2}}\left\Vert \left\vert g\right\vert
^{\frac{1}{2}}\right\Vert _{L^{2p}}\leq\left\Vert v^{2}g\right\Vert _{L^{1}%
}^{\frac{1}{2}}\left\Vert g\right\Vert _{L^{p}}^{\frac{1}{2}},
\]
for $1<q<2$ satisfying $\frac{1}{q}=\frac{1}{2}+\frac{1}{2p}$. Since $q<p\,$,
for any $1<q_{1}<q,$ letting $\frac{1}{q_{2}}=\frac{1}{q_{1}}-\frac{1}{q},$ we
have
\begin{align*}
\left\Vert g\right\Vert _{L^{q_{1}}\left(  \mathbf{R}\right)  }  &
\leq\left(  \left\Vert g\right\Vert _{L^{q_{1}}\left(  \left\vert v\right\vert
\leq1\right)  }+\left\Vert g\right\Vert _{L^{q_{1}}\left(  \left\vert
v\right\vert \geq1\right)  }\right) \\
&  \leq C\left(  \left\Vert g\right\Vert _{L^{p}}+\left\Vert \frac{1}%
{v}\right\Vert _{L^{q_{2}}\left(  \left\vert v\right\vert \geq1\right)
}\left\Vert vg\right\Vert _{L^{q}}\right)  <\infty.
\end{align*}
Since $\mathcal{H}$ is bounded $L^{p}\rightarrow L^{p}$ for any $p>1\ $and the
Fourier transform is bounded $L^{p}\rightarrow L^{p^{\prime}}$ for any
$1<p\leq2$, so from (\ref{formula-E_k-limit}),%
\begin{equation}
\left\Vert E\left(  t\right)  \right\Vert _{L^{q_{1}\prime}}\leq C\left\Vert
g\right\Vert _{L^{q_{1}}\left(  \mathbf{R}\right)  }<\infty.
\label{estimate-E}%
\end{equation}
As in the proof of Proposition \ref{prop-linear}, $\left(  f_{t},E_{t}\right)
$ satisfies (\ref{lvpe})with
\[
f_{t}\left(  t=0\right)  =e^{ikx}\left(  -ikvg\left(  v\right)  -\frac{1}%
{ik}\int_{\mathbf{R}}g\left(  v\right)  dv\ f_{0}^{\prime}\left(  v\right)
\right)  =e^{ikx}\tilde{g}\left(  v\right)  .
\]
Since
\[
\left\Vert \tilde{g}\left(  v\right)  \right\Vert _{L^{q}}\leq C\left(
\left\Vert vg\right\Vert _{L^{q}}+\left\Vert g\right\Vert _{L^{1}\left(
\mathbf{R}\right)  }\left\Vert f_{0}^{\prime}\right\Vert _{L^{q}}\right)
<\infty,
\]
by using the estimate for $E\left(  t\right)  ,\ $we get%
\begin{equation}
\left\Vert E^{\prime}\left(  t\right)  \right\Vert _{L^{q^{\prime}}}\leq
C\left\Vert \tilde{g}\left(  v\right)  \right\Vert _{L^{q}}<\infty.
\label{estimate-E'}%
\end{equation}
The decay of $\left\vert E\left(  t\right)  \right\vert \ $follows from the
estimates (\ref{estimate-E}) and (\ref{estimate-E'}).
\end{remark}

\begin{remark}
\label{rmk-linear}In Proposition \ref{prop-linear}, we prove that the linear
decay of electrical field $E\ $in $L^{2}$ norm holds true for initial data as
rough as
\[
f\left(  t=0\right)  \in H_{x}^{-\frac{3}{2}}L_{v}^{2},\ vf\left(  t=0\right)
\in H_{x}^{-\frac{1}{2}}L_{v}^{2}.
\]
In particular, it is not necessary to have any assumption on derivatives of
$f\left(  t=0\right)  $ to get linear decay of $E$. The linear decay result
implies that there exist no nontrivial invariant structures even in
$H_{x}^{-\frac{3}{2}}L_{v}^{2}$ space for the linearized problem. So our
result on existence of BGK waves in $W^{s,p}$ $\left(  s<1+\frac{1}{p}\right)
\ $neighborhood (Theorem \ref{thm-existence}) can not be traced back to the
linearized level. Also, the contrasting nonlinear dynamics in $W^{s,p}\left(
s>1+\frac{1}{p}\right)  \ $and particularly in $H^{s}\ \left(  s>\frac{3}%
{2}\right)  \ $spaces (Theorems \ref{thm-non-existence} and
\ref{thm-invariant}) have no analogue on the linearized level. These again are
due to the fact that particle trapping effects are completely ignored on the
linear level, but instead they play an important role on nonlinear dynamics.
\end{remark}

\section{Invariant structures in $H^{s}$ $\left(  s>\frac{3}{2}\right)  $}

We define invariant structures near a homogeneous state $\left(  f_{0}\left(
v\right)  ,0\right)  \ $in $H_{x,v}^{s}$ $\left(  s\geq0\right)  \ $space to
be the solutions $\left(  f\left(  t\right)  ,E\left(  t\right)  \right)  $ of
nonlinear VP equation (\ref{vlasov})-(\ref{poisson}), satisfying that for all
$t\in\mathbf{R,}$
\[
\left\Vert f\left(  t\right)  -f_{0}\right\Vert _{H^{s}\left(  \left(
0,T\right)  \times\mathbf{R}\right)  }<\varepsilon_{0},
\]
for some constant $\varepsilon_{0}>0$. The above defined invariant structures
include the well known structures such as travelling waves, time-periodic,
quasi-periodic or almost periodic solutions. In Sections 2 and 3, we prove
that $W^{1+\frac{1}{p},p}$ is the critical regularity for existence of
nontrivial travelling waves near a stable homogeneous state. For $p=2,$ this
critical regularity is $H^{\frac{3}{2}}$. In this section, we prove a much
stronger result that $H^{\frac{3}{2}}$ is also the critical regularity for
existence of any nontrivial invariant structure near a stable homogeneous
state. In the proof, we use the linear decay estimate in Proposition
\ref{prop-linear-integral-estimate}.

\begin{lemma}
\label{lemma-estimate-integral-small}Assume $f_{0}\left(  v\right)  \in
H^{s_{0}}\left(  \mathbf{R}\right)  $ $\left(  s_{0}>\frac{3}{2}\right)
\ $and let $0<T_{0}\leq+\infty$ be defined by (\ref{defn-T-0}). Let $\left(
f\left(  x,v,t\right)  ,E\left(  x,t\right)  \right)  $ be a solution of
(\ref{vlasov})-(\ref{poisson}) with $x-$period $T<T_{0}$, satisfying that: For
some $\frac{3}{2}<s\leq s_{0}$ and sufficiently small $\varepsilon_{0},$
\[
\left\Vert f\left(  t\right)  -f_{0}\right\Vert _{L_{x}^{2}H_{v}^{s}\left(
\left(  0,T\right)  \times\mathbf{R}\right)  }<\varepsilon_{0},\ \text{for all
}t\geq0.
\]
Then%
\begin{equation}
\left\Vert \left(  1+t\right)  ^{s-1}E\left(  x,t\right)  \right\Vert
_{L_{\left\{  t\geq0\right\}  }^{2}H_{x}^{\frac{3}{2}}}\leq C\varepsilon_{0},
\label{estimate-stability-integral}%
\end{equation}
for some constant $C.$
\end{lemma}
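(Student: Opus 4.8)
The plan is to regard the exact solution as a solution of the linearized system (\ref{lvpe}) driven by a quadratic source, and then to close a weighted integral bound by absorption, with Proposition \ref{prop-linear-integral-estimate} supplying the linear input. Writing $g=f-f_{0}$, the nonlinear equations (\ref{vlasov})--(\ref{poisson}) become
\[
\partial_{t}g+v\partial_{x}g-E\partial_{v}f_{0}=N,\qquad \partial_{x}E=-\int_{\mathbf{R}}g\,dv,\qquad N:=E\,\partial_{v}g ,
\]
so $(g,E)$ solves the linearized system with source $N$ in the Vlasov equation. By Duhamel's principle I split $E=E^{\mathrm{hom}}+E^{\mathrm{src}}$, where $E^{\mathrm{hom}}$ is the linear field generated by the data $g(\cdot,\cdot,0)$ and $E^{\mathrm{src}}$ is the contribution of $N$. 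Throughout set $Q:=\big\|(1+t)^{s-1}E\big\|_{L^{2}_{\{t\ge0\}}H^{3/2}_{x}}$, the quantity to be bounded.

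First I would dispose of $E^{\mathrm{hom}}$. Since $g(\cdot,\cdot,0)=f(0)-f_{0}\in L^{2}_{x}H^{s}_{v}$ with norm $<\varepsilon_{0}$, Proposition \ref{prop-linear-integral-estimate} applies with $(s_{x},s_{v})=(0,0)$ and with $(s_{x},s_{v})=(1-s,\,s-1)$; the latter is admissible because $s-1\le s_{0}-1$ and, on the nonzero Fourier modes $|k|\ge1$, the negative exponent $1-s$ only decreases the norm, so that $\|g(0)\|_{H^{1-s}_{x}H^{s-1}_{v}}\le\|g(0)\|_{L^{2}_{x}H^{s}_{v}}<\varepsilon_{0}$. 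These give $\|E^{\mathrm{hom}}\|_{L^{2}_{t}H^{3/2}_{x}}\le C\varepsilon_{0}$ and $\|t^{\,s-1}E^{\mathrm{hom}}\|_{L^{2}_{t}H^{3/2}_{x}}\le C\varepsilon_{0}$; combining them through $(1+t)^{s-1}\le C(1+t^{\,s-1})$ yields $\|(1+t)^{s-1}E^{\mathrm{hom}}\|_{L^{2}_{t}H^{3/2}_{x}}\le C\varepsilon_{0}$.

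The heart is the source term. A product estimate gives, for each $\tau$,
\[
\|N(\tau)\|_{H^{-(s-1)}_{x}H^{s-1}_{v}}\le\|N(\tau)\|_{L^{2}_{x}H^{s-1}_{v}}\le\|E(\tau)\|_{L^{\infty}_{x}}\,\|\partial_{v}g(\tau)\|_{L^{2}_{x}H^{s-1}_{v}}\le C\varepsilon_{0}\,\|E(\tau)\|_{H^{3/2}_{x}},
\]
using the embedding $H^{3/2}\hookrightarrow L^{\infty}$ in the periodic variable and $\|g(\tau)\|_{L^{2}_{x}H^{s}_{v}}<\varepsilon_{0}$. Multiplying by $(1+\tau)^{s-1}$ and taking $L^{2}_{\tau}$ shows $\|(1+\tau)^{s-1}N\|_{L^{2}_{\tau}(H^{-(s-1)}_{x}H^{s-1}_{v})}\le C\varepsilon_{0}Q$. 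In the Laplace representation (\ref{formula-E_k}) the source enters additively in the numerator over the \emph{same} resolvent $k^{2}-F$, so the forced analogue of Proposition \ref{prop-linear-integral-estimate} (its proof verbatim, with $G_{k}$ replaced by the transform of $N$) should produce $\|(1+t)^{s-1}E^{\mathrm{src}}\|_{L^{2}_{t}H^{3/2}_{x}}\le C\varepsilon_{0}Q$. Adding the two contributions gives $Q\le C\varepsilon_{0}+C\varepsilon_{0}Q$, and for $\varepsilon_{0}$ small enough the last term is absorbed, leaving $Q\le C\varepsilon_{0}$, which is (\ref{estimate-stability-integral}).

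Two points will require the real work. The first is the forced version of the weighted estimate: the time convolution $E^{\mathrm{src}}(t)=\int_{0}^{t}\mathcal{E}(t-\tau)N(\tau)\,d\tau$ cannot be closed by Minkowski or Young alone, because Proposition \ref{prop-linear-integral-estimate} controls the evolution kernel $\mathcal{E}(\theta)$ only in a weighted $L^{2}_{\theta}$-averaged sense, not pointwise in $\theta$ nor in $L^{1}_{\theta}$. The clean route is to transform in $t$: the convolution becomes multiplication by $1/(k^{2}-F)$, which is a bounded multiplier by Penrose's condition (\ref{estimate-bottom}), while the weight $t^{\,s-1}$ becomes a fractional derivative in the dual variable, so the weighted bound transfers from $N$ to $E^{\mathrm{src}}$ as in the proof of Proposition \ref{prop-linear-integral-estimate}. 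The second point is the a priori finiteness $Q<\infty$: because $g$ carries only $L^{2}$ regularity in $x$, the field $E(t)$ lies in $H^{3/2}_{x}$ only after time averaging (as noted after Proposition \ref{prop-linear-integral-estimate}), so $Q<\infty$ is not a pointwise fact and must be secured by first working on a finite interval $t\in[0,T^{\ast}]$ (or after mollifying in $x$), deriving the bound with constant independent of $T^{\ast}$, and then letting $T^{\ast}\to\infty$. It is precisely the hypothesis $s>\tfrac{3}{2}$, i.e.\ $2(s-1)>1$, that makes the relevant time integrals converge and the absorption legitimate.
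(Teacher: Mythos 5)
Your overall architecture is exactly the paper's: write $f_{1}=f-f_{0}$, view the nonlinear equation as $\partial_{t}f_{1}=L_{0}f_{1}+E\partial_{v}f_{1}$, split $E=E_{\mathrm{lin}}+E_{\mathrm{non}}$ by Duhamel, control $E_{\mathrm{lin}}$ by Proposition \ref{prop-linear-integral-estimate}, bound the source contribution by $C\varepsilon_{0}Q$ via the product estimate $\left\Vert E\partial_{v}f_{1}\right\Vert _{L_{x}^{2}H_{v}^{s-1}}\leq C\left\Vert E\right\Vert _{H_{x}^{3/2}}\left\Vert f_{1}\right\Vert _{L_{x}^{2}H_{v}^{s}}$, and absorb for $\varepsilon_{0}$ small. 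The treatment of the homogeneous part and the product estimate are fine.

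The genuine gap is the step you yourself flag as ``the real work'': the weighted bound for $E_{\mathrm{non}}(t)=\int_{0}^{t}\mathcal{E}\bigl[e^{(t-u)L_{0}}N(u)\bigr]du$. Your proposed route --- Laplace/Fourier transform in $t$, turning the convolution into multiplication by the resolvent and the weight into a fractional derivative in the dual variable --- is not carried out, and it is not clear it can be, since $N=E\partial_{v}f_{1}$ depends on the unknown solution and is controlled only in a weighted $L^{2}_{t}$ sense; one would have to justify the existence and regularity of its transform before any multiplier argument applies. The point you miss is that no transform is needed: Proposition \ref{prop-linear-integral-estimate}, applied to the \emph{frozen} datum $h=N(u)$ for each fixed $u$, already gives the weighted-in-$(t-u)$ estimate
\begin{equation*}
\int_{u}^{\infty}\left(  1+(t-u)\right)  ^{2(s-1)}\left\Vert \mathcal{E}\left[
e^{(t-u)L_{0}}N(u)\right]  \right\Vert _{H_{x}^{3/2}}^{2}dt\leq C\left\Vert
N(u)\right\Vert _{L_{x}^{2}H_{v}^{s-1}}^{2}.
\end{equation*}
One then estimates the Duhamel integral by Cauchy--Schwarz in $u$ against the weight splitting $\left(  1+(t-u)\right)  ^{-(s-1)}\left(  1+u\right)  ^{-(s-1)}\cdot\left(  1+(t-u)\right)  ^{s-1}\left(  1+u\right)  ^{s-1}$, uses
\begin{equation*}
\int_{0}^{t}\left(  1+(t-u)\right)  ^{-2(s-1)}\left(  1+u\right)
^{-2(s-1)}du\leq C\left(  1+t\right)  ^{-2(s-1)}
\end{equation*}
(valid precisely because $2(s-1)>1$), and exchanges the order of the $t$ and $u$ integrations to land on the displayed frozen-datum bound. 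This is how the paper closes the convolution, entirely on the physical side. Your second caveat, the a priori finiteness of $Q$ needed to legitimize the absorption, is a fair point that the paper's proof passes over silently; your suggestion of working on $[0,T^{\ast}]$ first is a reasonable way to address it, though it too would need the convolution estimate above to be in place.
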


\begin{proof}
Denote $L_{0}$ to be the linearized operator corresponding to the linearized
Vlasov-Poisson equation at $\left(  f_{0}\left(  v\right)  ,0\right)  $, and
$\mathcal{E}$ is the mapping from $f\left(  x,v\right)  $ to $E\left(
x\right)  $ by the Poisson equation
\[
E_{x}=-\int f\ dv,
\]
where $f$ satisfies the neutral condition $\int_{0}^{T}\int_{\mathbf{R}%
}f\left(  x,v\right)  dvdx=0.$ It follows from Proposition
\ref{prop-linear-integral-estimate} that: For any $0\leq s_{v}\leq s_{0}%
-1,\ $if $h\left(  x,v\right)  \in L_{x}^{2}H_{v}^{s_{v}},$then
\begin{equation}
\left\Vert \left(  1+t\right)  ^{s_{v}}\mathcal{E}\left(  e^{tL_{0}}h\right)
\right\Vert _{L_{t}^{2}H_{x}^{\frac{3}{2}}}\leq C\left\Vert h\left(
x,v\right)  \right\Vert _{L_{x}^{2}H_{v}^{s_{v}}}.
\label{estimate-linear-notation}%
\end{equation}
Denote $f_{1}\left(  t\right)  =f\left(  t\right)  -f_{0}$, then
\[
\partial_{t}f_{1}=L_{0}f_{1}+E\partial_{v}f_{1}.
\]
Thus
\[
f_{1}\left(  t\right)  =e^{tL_{0}}f_{1}\left(  0\right)  +\int_{0}%
^{t}e^{\left(  t-u\right)  L_{0}}\left(  E\partial_{v}f_{1}\right)  \left(
u\right)  du=f_{\text{lin}}\left(  t\right)  +f_{\text{non}}\left(  t\right)
,
\]
and correspondingly
\[
E\left(  t\right)  =\mathcal{E}\left(  f_{\text{lin}}\left(  t\right)
\right)  +\mathcal{E}\left(  f_{\text{non}}\left(  t\right)  \right)
=E_{\text{lin}}\left(  t\right)  +E_{\text{non}}\left(  t\right)  .
\]
By the linear estimate (\ref{estimate-linear-notation}),
\[
\left\Vert \left(  1+t\right)  ^{s-1}E_{\text{lin}}\left(  x,t\right)
\right\Vert _{L_{\left\{  t\geq0\right\}  }^{2}H_{x}^{\frac{3}{2}}}\leq
C\left\Vert f_{1}\left(  0\right)  \right\Vert _{L_{x}^{2}H_{v}^{s-1}},
\]
and
\begin{align*}
&  \left\Vert \left(  1+t\right)  ^{s-1}E_{\text{non}}\left(  x,t\right)
\right\Vert _{L_{\left\{  t\geq0\right\}  }^{2}H_{x}^{\frac{3}{2}}}^{2}\\
&  =\int_{0}^{\infty}\left(  1+t\right)  ^{2\left(  s-1\right)  }\left\Vert
E_{\text{non}}\left(  x,t\right)  \right\Vert _{H_{x}^{\frac{3}{2}}}^{2}dt\\
&  \leq\int_{0}^{\infty}\left(  1+t\right)  ^{2\left(  s-1\right)  }\left(
\int_{0}^{t}\left\Vert \mathcal{E}\left[  e^{\left(  t-u\right)  L_{0}}\left(
E\partial_{v}f_{1}\right)  \left(  u\right)  \right]  \right\Vert
_{H_{x}^{\frac{3}{2}}}du\right)  ^{2}dt\\
&  \leq\int_{0}^{\infty}\left(  1+t\right)  ^{2\left(  s-1\right)  }\int
_{0}^{t}\left(  1+\left(  t-u\right)  \right)  ^{-2\left(  s-1\right)
}\left(  1+u\right)  ^{-2\left(  s-1\right)  }du\\
&  \ \ \ \ \ \ \cdot\int_{0}^{t}\left(  1+u\right)  ^{2\left(  s-1\right)
}\left(  1+\left(  t-u\right)  \right)  ^{2\left(  s-1\right)  }\left\Vert
\mathcal{E}\left[  e^{\left(  t-u\right)  L_{0}}\left(  E\partial_{v}%
f_{1}\right)  \left(  u\right)  \right]  \right\Vert _{H_{x}^{\frac{3}{2}}%
}^{2}\ dudt\\
&  \leq C\int_{0}^{\infty}\int_{0}^{t}\left(  1+u\right)  ^{2\left(
s-1\right)  }\left(  1+\left(  t-u\right)  \right)  ^{2\left(  s-1\right)
}\left\Vert \mathcal{E}\left[  e^{\left(  t-u\right)  L_{0}}\left(
E\partial_{v}f_{1}\right)  \left(  u\right)  \right]  \right\Vert
_{H_{x}^{\frac{3}{2}}}^{2}\ dudt\\
&  =C\int_{0}^{\infty}\left(  1+u\right)  ^{2\left(  s-1\right)  }\int
_{u}^{\infty}\left(  1+\left(  t-u\right)  \right)  ^{2\left(  s-1\right)
}\left\Vert \mathcal{E}\left[  e^{\left(  t-u\right)  L_{0}}\left(
E\partial_{v}f_{1}\right)  \left(  u\right)  \right]  \right\Vert
_{H_{x}^{\frac{3}{2}}}^{2}\ dtdu\\
&  \leq C\int_{0}^{\infty}\left(  1+u\right)  ^{2\left(  s-1\right)
}\left\Vert \left(  E\partial_{v}f_{1}\right)  \left(  u\right)  \right\Vert
_{L_{x}^{2}H_{v}^{s-1}}^{2}du\\
&  \leq C\int_{0}^{\infty}\left(  1+u\right)  ^{2\left(  s-1\right)
}\left\Vert E\left(  u\right)  \right\Vert _{H_{x}^{\frac{3}{2}}}%
^{2}\left\Vert f_{1}\left(  u\right)  \right\Vert _{L_{x}^{2}H_{v}^{s}}%
^{2}du\\
&  \leq C\varepsilon_{0}^{2}\left\Vert \left(  1+t\right)  ^{s-1}E\left(
x,t\right)  \right\Vert _{L_{\left\{  t\geq0\right\}  }^{2}H_{x}^{\frac{3}{2}%
}}^{2}.
\end{align*}
In the above estimate, we use the fact that
\[
\int_{0}^{t}\left(  1+\left(  t-u\right)  \right)  ^{-2\left(  s-1\right)
}\left(  1+u\right)  ^{-2\left(  s-1\right)  }du\leq C\left(  1+t\right)
^{-2\left(  s-1\right)  }%
\]
because $2\left(  s-1\right)  >1\ $by our assumption that $s>\frac{3}{2}$, and
the inequality
\[
\left\Vert E\partial_{v}f_{1}\right\Vert _{L_{x}^{2}H_{v}^{s-1}}\leq
C\left\Vert E\right\Vert _{H_{x}^{\frac{3}{2}}}\left\Vert f_{1}\right\Vert
_{L_{x}^{2}H_{v}^{s}}.
\]
Thus
\begin{align*}
&  \left\Vert \left(  1+t\right)  ^{s-1}E\left(  x,t\right)  \right\Vert
_{L_{\left\{  t\geq0\right\}  }^{2}H_{x}^{\frac{3}{2}}}\\
&  \leq\left\Vert \left(  1+t\right)  ^{s-1}E_{\text{lin}}\left(  x,t\right)
\right\Vert _{L_{\left\{  t\geq0\right\}  }^{2}H_{x}^{\frac{3}{2}}}+\left\Vert
\left(  1+t\right)  ^{s-1}E_{\text{non}}\left(  x,t\right)  \right\Vert
_{L_{\left\{  t\geq0\right\}  }^{2}H_{x}^{\frac{3}{2}}}\\
&  \leq C\left\Vert f_{1}\left(  0\right)  \right\Vert _{L_{x}^{2}H_{v}^{s}%
}+C\varepsilon_{0}\left\Vert \left(  1+t\right)  ^{s-1}E\left(  x,t\right)
\right\Vert _{L_{\left\{  t\geq0\right\}  }^{2}H_{x}^{\frac{3}{2}}}.
\end{align*}
By taking $\varepsilon_{0}=\frac{1}{2C},$ we get the estimate
(\ref{estimate-stability-integral}).
\end{proof}

\begin{proof}
[Proof of Theorem \ref{thm-invariant}]For any $t_{0}>0$, let $\left(
\tilde{f}\left(  t\right)  ,\tilde{E}\left(  t\right)  \right)  $ be the
solution of nonlinear VP equation (\ref{vlasov})-(\ref{poisson}) with the
initial data
\[
\left(  \tilde{f}\left(  0\right)  ,\tilde{E}\left(  0\right)  \right)
=\left(  f\left(  -t_{0}\right)  ,E\left(  -t_{0}\right)  \right)  .
\]
Then
\[
\left(  f\left(  t\right)  ,E\left(  t\right)  \right)  =\left(  \tilde
{f}\left(  t+t_{0}\right)  ,\tilde{E}\left(  t+t_{0}\right)  \right)  .
\]
The assumption (\ref{assumption-thm-invariant}) implies that
\[
\left\Vert \tilde{f}\left(  t\right)  -f_{0}\right\Vert _{L_{x}^{2}H_{v}^{s}%
}<\varepsilon_{0},\ \text{for\ all\ }t\in\mathbf{R}.
\]
Thus by Lemma \ref{lemma-estimate-integral-small},
\[
\left\Vert \left(  1+t\right)  ^{s-1}\tilde{E}\left(  x,t\right)  \right\Vert
_{L_{\left\{  t\geq0\right\}  }^{2}H_{x}^{\frac{3}{2}}}\leq C\varepsilon_{0}.
\]
So%
\begin{align*}
\int_{0}^{1}\left\Vert E\left(  x,t\right)  \right\Vert _{H_{x}^{\frac{3}{2}}%
}^{2}dt  &  =\int_{t_{0}}^{t_{0}+1}\left\Vert \tilde{E}\left(  x,t\right)
\right\Vert _{H_{x}^{\frac{3}{2}}}^{2}dt\\
&  \leq\frac{1}{\left(  1+t_{0}\right)  ^{2\left(  s-1\right)  }}\int_{t_{0}%
}^{t_{0}+1}\left(  1+t\right)  ^{2\left(  s-1\right)  }\left\Vert \tilde
{E}\left(  x,t\right)  \right\Vert _{H_{x}^{\frac{3}{2}}}^{2}dt\\
&  \leq\frac{\left(  C\varepsilon_{0}\right)  ^{2}}{\left(  1+t_{0}\right)
^{2\left(  s-1\right)  }}.
\end{align*}
Since $t_{0}$ can be arbitrarily large, we have
\[
\int_{0}^{1}\left\Vert E\left(  x,t\right)  \right\Vert _{H_{x}^{\frac{3}{2}}%
}^{2}dt=0
\]
and thus $E\left(  x,t\right)  \equiv0$ when $t\in\left[  0,1\right]  $.
Repeating the above argument for any finite time interval $I\subset\mathbf{R}%
$, we get $E\left(  x,t\right)  \equiv0$ when $t\in I$. Thus $E\left(
x,t\right)  \equiv0$ for any $t\in\mathbf{R}$.
\end{proof}

The following nonlinear instability result follows immediately from Theorem
\ref{thm-invariant}.

\begin{corollary}
Assume the homogeneous profile $f_{0}\left(  v\right)  \in H^{s}\left(
\mathbf{R}\right)  $ $\left(  s>\frac{3}{2}\right)  .\ $For any $T<T_{0}$
(defined by (\ref{defn-T-0})), there exists $\varepsilon_{0}>0$, such that for
any solution $\left(  f\left(  t\right)  ,E\left(  t\right)  \right)  $ to the
nonlinear VP equation (\ref{vlasov})-(\ref{poisson}) with nonzero $E\left(
0\right)  $, there exists $T\in\mathbf{R}$ such that $\left\Vert f\left(
T\right)  -f_{0}\right\Vert _{L_{x}^{2}H_{v}^{s}}\geq\varepsilon_{0}.$
\end{corollary}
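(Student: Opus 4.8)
The plan is to derive this as a direct contrapositive of Theorem \ref{thm-invariant}, so essentially no new work is required beyond unpacking the logical structure. First I would fix $f_0 \in H^s(\mathbf{R})$ with $s > \frac{3}{2}$ and a spatial period $T < T_0$, and take $\varepsilon_0 > 0$ to be \emph{exactly} the constant furnished by Theorem \ref{thm-invariant} for this $f_0$ and $T$. The claim to be proved is that every solution $(f(t),E(t))$ of the nonlinear Vlasov--Poisson system \eqref{vlasov}--\eqref{poisson} with $E(0) \not\equiv 0$ must, at some time $\tau \in \mathbf{R}$, satisfy $\|f(\tau) - f_0\|_{L^2_x H^s_v} \geq \varepsilon_0$.

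I would argue by contradiction. Suppose no such $\tau$ exists; then
\[
\left\Vert f(t) - f_0 \right\Vert_{L^2_x H^s_v} < \varepsilon_0, \quad \text{for all } t \in \mathbf{R}.
\]
This is precisely the hypothesis \eqref{assumption-thm-invariant} of Theorem \ref{thm-invariant}. Applying that theorem gives $E(t) \equiv 0$ for all $t \in \mathbf{R}$, and in particular $E(0) \equiv 0$. This contradicts the assumption that $E(0)$ is nonzero, completing the argument.

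There is no substantive obstacle here: the entire analytic content---the linear integral decay estimate of Proposition \ref{prop-linear-integral-estimate}, its nonlinear upgrade in Lemma \ref{lemma-estimate-integral-small}, and the bootstrap forcing $E \equiv 0$---is already absorbed into Theorem \ref{thm-invariant}. The only points requiring mild care are bookkeeping ones: ensuring that the same threshold $\varepsilon_0$ and the same period restriction $T < T_0$ are carried over verbatim from Theorem \ref{thm-invariant}, and noting that the ``time'' at which the neighborhood is exited is distinct from the spatial period (the statement's reuse of the symbol $T$ in ``there exists $T \in \mathbf{R}$'' should be read as an independent time variable). Thus the corollary is merely the instability reading of the rigidity statement: a nontrivial electric field cannot persist inside an arbitrarily small $H^s$ tube around a Penrose-stable homogeneous equilibrium, so the orbit is driven out of every such tube.
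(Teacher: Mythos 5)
Your proof is correct and matches the paper's intent exactly: the paper offers no written proof, merely noting that the corollary "follows immediately from Theorem \ref{thm-invariant}," and your contrapositive argument is precisely that immediate deduction. The remark about reading the second occurrence of $T$ as a time variable rather than the spatial period is a sensible clarification of the statement's notation.
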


The invariant structures studied in Theorem \ref{thm-invariant} stay in the
$L_{x}^{2}H_{v}^{s}$ $\left(  s>\frac{3}{2}\right)  \ $neighborhood of a
stable homogeneous state $\left(  f_{0}\left(  v\right)  ,0\right)  \ $for all
time $t\in\mathbf{R}$. We can also study the positive (or negative) invariant
structures near $\left(  f_{0}\left(  v\right)  ,0\right)  ,$ which are
solutions $\left(  f\left(  t\right)  ,E\left(  t\right)  \right)  $ to
nonlinear VP equation satisfying that $\left\Vert f\left(  t\right)
-f_{0}\right\Vert _{L_{x}^{2}H_{v}^{s}}<\varepsilon_{0},\ $for\ all\ $t\geq0$
(or $t\leq0$)$.$ The next theorem shows that the electric field of these
semi-invarint structures must decay when $t\rightarrow+\infty$ (or
$t\rightarrow-\infty).$

\begin{theorem}
\label{thm-semi-invariant}Assume the homogeneous profile $f_{0}\left(
v\right)  \in H^{s}\left(  \mathbf{R}\right)  $ $\left(  s>\frac{3}{2}\right)
.\ $For any $T<T_{0}$ (defined by (\ref{defn-T-0})), there exists
$\varepsilon_{0}>0$ sufficiently small, such that if
\[
\left\Vert f\left(  t\right)  -f_{0}\right\Vert _{L_{x}^{2}H_{v}^{s}%
}<\varepsilon_{0},\ \text{for\ all\ }t\geq0\text{ }\left(  \text{or }%
t\leq0\right)  ,
\]
and
\[
\left\Vert f\left(  0\right)  \right\Vert _{L_{x,v}^{\infty}}<\infty
,\ \int_{0}^{T}\int_{\mathbf{R}}v^{2}f\left(  0,x,v\right)  dvdx<\infty,
\]
then $\left\Vert E\left(  t,x\right)  \right\Vert _{L_{x}^{2}}\rightarrow0$
when $t\rightarrow+\infty$ $\left(  \text{or }t\rightarrow-\infty\right)  .$
\end{theorem}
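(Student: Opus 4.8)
The plan is to deduce everything from the weighted integral estimate that is already available and then upgrade it to genuine pointwise decay using the conservation laws of the nonlinear flow. Suppose first the bound $\|f(t)-f_{0}\|_{L_{x}^{2}H_{v}^{s}}<\varepsilon_{0}$ holds for all $t\geq0$. Taking $s_{0}=s$ in Lemma \ref{lemma-estimate-integral-small}, whose hypothesis is exactly such a one-sided invariance bound, I obtain directly
\[
\left\Vert (1+t)^{s-1}E(x,t)\right\Vert _{L_{\{t\geq0\}}^{2}H_{x}^{3/2}}\leq C\varepsilon_{0}.
\]
Since $s>\frac{3}{2}$ forces $(1+t)^{2(s-1)}\geq1$ and $\|E\|_{L_{x}^{2}}\leq\|E\|_{H_{x}^{3/2}}$, this yields $\int_{0}^{\infty}\|E(t)\|_{L_{x}^{2}}^{2}\,dt<\infty$; that is, $g(t):=\|E(t)\|_{L_{x}^{2}}^{2}$ is integrable on $[0,\infty)$. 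The remaining task is to promote integrability to the pointwise statement $g(t)\to0$, and for this it suffices to show that $g$ is uniformly continuous on $[0,\infty)$.

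To control the increments of $g$ I will use the two extra hypotheses on $f(0)$ together with conservation laws of (\ref{vpe}). Along the nonlinear flow the Vlasov equation transports $f$, so $\|f(t)\|_{L_{x,v}^{\infty}}=\|f(0)\|_{L_{x,v}^{\infty}}$ is conserved, and conservation of the total energy keeps $\int_{0}^{T}\int_{\mathbf{R}}v^{2}f(t)\,dv\,dx$ bounded by its finite initial value for all $t$. The kinetic interpolation (\ref{estimate-rho-n}) then bounds $\rho(t)=\int f(t)\,dv$ in $L_{x}^{3}$ uniformly in $t$; since $E_{x}=1-\rho$ and $\int_{0}^{T}E\,dx=0$, the field $E(t)$ stays uniformly bounded in $W^{1,3}(0,T)\hookrightarrow L_{x}^{3}$. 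A parallel kinetic estimate for the current, $\left\vert\int vf\,dv\right\vert\leq C\|f\|_{L_{v}^{\infty}}^{1/3}\left(\int v^{2}f\,dv\right)^{2/3}$, bounds $j(t)=\int vf(t)\,dv$ uniformly in $L_{x}^{3/2}$.

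Now I invoke the Ampère form $E_{t}=\int vf\,dv-U=j-U$ of the equivalent Vlasov--Maxwell system recorded in the introduction. For $t_{2}>t_{1}$,
\[
\|E(t_{2})\|_{L_{x}^{2}}^{2}-\|E(t_{1})\|_{L_{x}^{2}}^{2}=\int_{t_{1}}^{t_{2}}2\langle E(t),E_{t}(t)\rangle_{L_{x}^{2}}\,dt=\int_{t_{1}}^{t_{2}}2\langle E(t),j(t)\rangle_{L_{x}^{2}}\,dt,
\]
where the constant $U$ drops out because $\int_{0}^{T}E\,dx=0$. By Hölder, $|\langle E,j\rangle_{L_{x}^{2}}|\leq\|E(t)\|_{L_{x}^{3}}\|j(t)\|_{L_{x}^{3/2}}\leq C$ uniformly in $t$, so $g$ is uniformly Lipschitz. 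A nonnegative, Lipschitz, integrable function on $[0,\infty)$ must tend to $0$, giving $\|E(t)\|_{L_{x}^{2}}\to0$ as $t\to+\infty$. The case of a bound for $t\leq0$ with $t\to-\infty$ follows by applying this result to the time-reversed solution $(f(x,-v,-t),E(x,-t))$, which is again a solution of (\ref{vpe}) near $f_{0}(-v)$; the reflection is an isometry for all the relevant norms, and the correspondence $v_{i}\leftrightarrow-v_{i}$ between extrema leaves the integrals in (\ref{defn-T-0}) and hence $T_{0}$ unchanged.

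I expect the genuine obstacle to be exactly this upgrade from the integral bound to pointwise decay: the weighted $L_{t}^{2}$ estimate by itself does not preclude isolated spikes of $\|E(t)\|_{L_{x}^{2}}$, and excluding them is where the nonlinear conservation structure enters. This is precisely why the two extra hypotheses on $f(0)$ are needed here, whereas they play no role in Theorem \ref{thm-invariant}, where a two-sided bound forced $E\equiv0$ outright. A secondary but routine point is to keep the argument at the level of increments, as written above, so that no pointwise time-differentiability of $\|E(t)\|_{L_{x}^{2}}^{2}$ need be justified.
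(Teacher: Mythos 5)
Your proposal is correct and follows essentially the same route as the paper's proof: apply Lemma \ref{lemma-estimate-integral-small} to get the weighted space-time bound on $E$, then use conservation of $\left\Vert f\right\Vert _{L^{\infty}}$ and of energy to bound $j=\int vf\,dv$ in $L_{x}^{3/2}$ and $E$ in $L_{x}^{3}$ uniformly, and control the increments of $\left\Vert E\right\Vert _{L_{x}^{2}}^{2}$ via $\int jE\,dx$. The only (immaterial) difference is the closing step: the paper uses $2(s-1)>1$ to show $\int_{0}^{\infty}\left\Vert E\right\Vert _{H_{x}^{1}}dt<\infty$, so that the derivative of $\left\Vert E\right\Vert _{L_{x}^{2}}^{2}$ is integrable and the limit exists and must vanish, whereas you combine integrability of $\left\Vert E\right\Vert _{L_{x}^{2}}^{2}$ with its uniform Lipschitz bound (a Barbalat-type argument); both are valid.
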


\begin{proof}
We only consider the positive invariant case, since the proof is the same for
the negative invariant case. First, there exists a constant $C$ depending on
$M_{1}=\left\Vert f\left(  0\right)  \right\Vert _{L^{\infty}}$ and
$M_{2}=\int_{0}^{T}\int_{\mathbf{R}}\frac{1}{2}v^{2}f\left(  0\right)  dvdx$,
such that
\[
\left\Vert E\left(  x,t\right)  \right\Vert _{H_{x}^{1}}\leq C,\ \text{for all
}t.
\]
Indeed, by the same estimate as in (\ref{estimate-rho-n}),%
\begin{align*}
\left\Vert \rho\left(  x,0\right)  \right\Vert _{L^{3}}  &  =\left\Vert \int
f\left(  x,v,0\right)  dv\right\Vert _{L^{3}}\leq\left\Vert f\left(  0\right)
\right\Vert _{L^{\infty}}^{\frac{2}{3}}\left(  \int_{0}^{T}\int_{\mathbf{R}%
}v^{2}f\left(  x,v,0\right)  dvdx\right)  ^{1/3}\\
&  =M_{1}^{\frac{2}{3}}M_{2}^{\frac{1}{3}}%
\end{align*}
and%
\begin{equation}
\left\Vert E\left(  x,0\right)  \right\Vert _{H^{1}}\leq C\left\Vert
1-\rho\left(  x,0\right)  \right\Vert _{L^{2}}\leq C\left(  T^{/2}%
+T^{1/6}\left\Vert \rho\left(  x,0\right)  \right\Vert _{L^{3}}\right)  \leq
C. \label{estimate-E-0}%
\end{equation}
By the energy conservation,
\[
\int_{0}^{T}\int_{\mathbf{R}}v^{2}f\left(  x,v,t\right)  dvdx+\left\Vert
E\left(  x,t\right)  \right\Vert _{L_{x}^{2}}^{2}=\int_{0}^{T}\int
_{\mathbf{R}}v^{2}f\left(  x,v,0\right)  dvdx+\left\Vert E\left(  x,0\right)
\right\Vert _{L^{2}}^{2}<C.
\]
Let $j=\int vf\ dv$, then
\[
\left\vert j\left(  t\right)  \right\vert =\left\vert \int vf\ \left(
t\right)  dv\right\vert \leq\left\Vert f\left(  t\right)  \right\Vert
_{L^{\infty}}^{1/3}\left(  \int_{\mathbf{R}}v^{2}f\left(  x,v,t\right)
dvdx\right)  ^{2/3},
\]
and thus
\[
\left\Vert j\left(  x,t\right)  \right\Vert _{L_{x}^{\frac{3}{2}}}\leq
M_{1}^{\frac{1}{3}}M_{2}^{\frac{3}{2}}\leq C.
\]
Since%
\begin{align*}
\frac{d}{dt}\left\Vert E\left(  x,t\right)  \right\Vert _{L_{x}^{2}}^{2}  &
=\int_{0}^{T}j\left(  x,t\right)  E\left(  x,t\right)  dx\\
&  \leq\left\Vert j\left(  x,t\right)  \right\Vert _{L_{x}^{\frac{3}{2}}%
}\left\Vert E\left(  x,t\right)  \right\Vert _{L_{x}^{3}}\leq C\left\Vert
E\left(  x,t\right)  \right\Vert _{H_{x}^{1}},
\end{align*}
and
\begin{align*}
\int_{0}^{\infty}\left\Vert E\left(  x,t\right)  \right\Vert _{H_{x}^{1}}dt
&  \leq\left(  \int_{0}^{\infty}\left(  1+t\right)  ^{-2\left(  s-1\right)
}dt\right)  ^{\frac{1}{2}}\left(  \int_{0}^{\infty}\left(  1+t\right)
^{2\left(  s-1\right)  }\left\Vert E\left(  x,t\right)  \right\Vert
_{H_{x}^{\frac{3}{2}}}^{2}dt\right)  ^{\frac{3}{2}}\\
&  \leq C\varepsilon_{0},
\end{align*}
thus $\lim_{t\rightarrow\infty}\left\Vert E\left(  x,t\right)  \right\Vert
_{L_{x}^{2}}$ exists and this limit must be zero. This finishes the proof.
\end{proof}

\section{Appendix}

In this appendix, we reformulate Penrose's linear stability criterion. The
main purpose is to clarify the intervals of wave numbers (periods) for which
linear instability can be found. In the original paper of Penrose
\cite{penrose}, a necessary and sufficient condition was given for linear
instability of a homogeneous state at certain wave number. However, the
precise range of unstable wave numbers was not given in \cite{penrose}.

\begin{lemma}
\label{lemma-penrose}Assume $f_{0}\left(  v\right)  \in W^{2,p}\left(
\mathbf{R}\right)  $ $\left(  p>1\right)  .\ $Let $S=\left\{  v_{i}\right\}
_{i=1}^{l}$ be the set of all extrema points of $f_{0}.\ $If for some $1\leq
i\leq l$,
\begin{equation}
\int\frac{f_{0}^{\prime}\left(  v\right)  }{v-v_{i}}dv=\left(  \frac{2\pi
}{T_{i}}\right)  ^{2}>0, \label{instability-T-i}%
\end{equation}
then there exists linearly growing mode with $x-$period $T\ $near $T_{i}$.
More precisely, when $v_{i}$ is a minimum (maximum) point of $f_{0}$, unstable
modes exist for $T$ slightly greater (smaller) than $T$. Let $0<T_{0}%
\leq+\infty$ be defined by
\[
\left(  \frac{2\pi}{T_{0}}\right)  ^{2}=\max\left\{  0,\max_{v_{i}\in S}%
\int\frac{f_{0}^{\prime}\left(  v\right)  }{v-v_{i}}dv\right\}  .
\]
Then for $T<T_{0}$, there exist no unstable modes with $x-$Period $T$.
\end{lemma}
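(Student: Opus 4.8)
The plan is to reduce the existence of a linearly growing mode of $x$-period $T=2\pi/k$ to a zero-counting problem for the plasma dielectric function, and then read off both assertions from a Nyquist (winding number) analysis of the Penrose curve. First I would look for normal modes $f=e^{ik(x-ct)}\hat f(v)$, $E=e^{ik(x-ct)}\hat E$ of \eqref{lvpe}; eliminating $\hat f$ through $\hat f=\hat E f_0'/(ik(v-c))$ and substituting into the Poisson relation yields, after dividing by $\hat E\neq0$, the dispersion relation
\[
k^{2}=F(c),\qquad F(c)=\int_{\mathbf R}\frac{f_0'(v)}{v-c}\,dv,\quad \operatorname{Im}c>0 .
\]
Thus a growing mode of period $T=2\pi/k$ exists if and only if $k^{2}-F$ has a zero in the open upper half plane. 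Since $f_0\in W^{2,p}\hookrightarrow C^{1,\alpha}$, the function $F$ is analytic in the upper half plane, extends continuously to the real axis, and $F(c)\to0$ as $|c|\to\infty$; convergence of the relevant singular integrals at the critical points is guaranteed by $f_0'(v_i)=0$ together with the Hardy-type bound of Lemma~\ref{lemma-hardy}.

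Next I would apply the argument principle to $k^{2}-F$ on the boundary of the upper half plane. Because $F\to0$ at infinity, the large semicircle contributes nothing, so the number of zeros in the upper half plane equals the winding number about $k^{2}$ of the Penrose curve $\Gamma:\,u\mapsto F(u+i0)$, $u\in\mathbf R$. By the Plemelj formula, $\operatorname{Im}F(u+i0)=\pi f_0'(u)$ and $\operatorname{Re}F(u+i0)=\mathrm{P}\!\int f_0'/(v-u)\,dv$; hence $\Gamma$ meets the real axis exactly at the critical points of $f_0$, where the real part equals $\int f_0'/(v-v_i)\,dv=a_i$ (the principal value being an ordinary integral since $f_0'(v_i)=0$). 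At an extremum the crossing is transversal, directed upward when $v_i$ is a minimum ($f_0''(v_i)>0$) and downward when $v_i$ is a maximum; a short asymptotic computation using $\int f_0'\,dv=0$ and $\int f_0\,dv=1$ gives $\operatorname{Re}F(u+i0)\sim u^{-2}>0$ as $u\to\pm\infty$, so $\Gamma$ approaches $0$ from the right at both ends.

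The winding number about a real point $s=k^{2}$ is then the signed count of real-axis crossings of $\Gamma$ lying to the right of $s$, an upward crossing contributing $+1$ and a downward one $-1$. For the instability statement, passing $s$ downward through $a_i$ at a minimum adds a $+1$, so a growing mode appears for $k^{2}$ just below $a_i$, i.e. $T$ slightly greater than $T_i$; at a maximum the crossing is downward, so the winding number just above $a_i$ exceeds that just below by one, and nonnegativity of winding numbers forces at least one upper-half-plane zero for $k^{2}$ just above $a_i$, i.e. $T$ slightly smaller than $T_i$. For the nonexistence statement, if $T<T_0$ then $k^{2}>(2\pi/T_0)^{2}=\max\{0,\max_i a_i\}$ by \eqref{defn-T-0}, so no extremal crossing lies to the right of $s$; non-extremal critical points produce only tangential touches, contributing $0$, whence the winding number vanishes and there is no growing mode.

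The main obstacle I expect is the careful sign and orientation bookkeeping linking the crossing directions of $\Gamma$ to the jumps of the winding number, together with the rigorous justification of the contour argument at the boundary: continuity of $F$ up to the real axis, the decay needed to discard the semicircle, and the absence of real-axis zeros of $k^{2}-F$ so that no spurious neutral mode is miscounted. In particular the degenerate critical points, where $f_0'$ vanishes without changing sign, must be shown to yield only tangencies and hence no net winding; this is precisely what makes the threshold $T_0$ in \eqref{defn-T-0} depend only on the extremal set $S$.
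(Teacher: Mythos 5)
Your reduction to the dispersion relation $k^{2}=F(c)$ and your treatment of the stability half ($T<T_{0}$) essentially coincide with the paper's: the paper also invokes the Nyquist graph of $Z(\xi+i0)=P\!\int\frac{f_0'(v)}{v-\xi}dv+i\pi f_0'(\xi)$ and observes that the unstable wave numbers are exactly the points of the positive real axis enclosed by that curve, so that $k^{2}>\max\{0,\max_{i}a_{i}\}$ (equivalently $T<T_{0}$) cannot be enclosed. Where you genuinely diverge is the instability half. The paper does \emph{not} argue globally there: it starts from the neutral mode $(k_{0},c_{0})=(2\pi/T_{i},v_{i})$, invokes a local bifurcation argument (citing the shear-flow analysis of \cite{lin-siam}) to produce unstable modes nearby, and determines the bifurcation direction from the Plemelj limit
\[
\frac{k^{2}-k_{0}^{2}}{c-v_{i}}\longrightarrow P\!\int\frac{f_{0}'(v)}{(v-v_{i})^{2}}\,dv+i\pi f_{0}''(v_{i}),
\]
which forces $k^{2}<k_{0}^{2}$ when $f_{0}''(v_{i})>0$ and $k^{2}>k_{0}^{2}$ when $f_{0}''(v_{i})<0$. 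Your version replaces this local computation by a global winding-number jump across the crossing value $a_{i}$ together with nonnegativity of the winding number (which you correctly get for free from the argument principle). Your route has the advantage of making the band structure and the ``stability gaps'' of Remark \ref{rmk-penrose} completely transparent; the paper's local route has the advantage of being insensitive to the global configuration of the Nyquist curve.

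That insensitivity matters, and it is where your instability argument has a gap. The statement ``passing $s$ downward through $a_{i}$ at a minimum adds $+1$ to the winding number'' is only valid if $v_{i}$ is the \emph{only} extremum whose crossing value equals $a_{i}$. If some maximum $w$ of $f_{0}$ happened to satisfy $P\!\int\frac{f_0'}{v-w}dv=a_{i}$, the upward and downward jumps would cancel, the winding number would not change across $a_{i}$, and your argument would produce no unstable mode on either side of $T_{i}$ --- while the paper's local bifurcation still produces one. (Ruling this coincidence out is possible --- one can track the level arc $\{\operatorname{Im}F=0\}$ emanating from $v_{i}$ into the upper half plane, along which $F$ is real and strictly decreasing from $a_{i}$, and observe that it must terminate at a crossing of strictly smaller value or at infinity --- but that is an additional argument you would need to supply, and it is essentially a disguised form of the local analysis you were trying to avoid.) You should either add this step, or, more simply, prove the instability half locally as the paper does and reserve the winding-number count for the nonexistence half, where no such degeneracy can interfere because the hypothesis $k^{2}>\max_{i}a_{i}$ places $s$ strictly to the right of every crossing.
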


\begin{proof}
Plugging the normal mode solution
\[
\left(  f\left(  x,v,t\right)  ,E\left(  x,t\right)  \right)  =e^{ik\left(
x-ct\right)  }\left(  f_{k}\left(  v\right)  ,E_{k}\right)
\]
into the linearized Vlasov-Poisson equation, we obtain the standard dispersion
relation
\begin{equation}
k^{2}-\int\frac{f_{0}^{\prime}\left(  v\right)  }{v-c}dv=0. \label{dispersion}%
\end{equation}
Linear instability with $x-$period $T$ corresponds to a solution of
(\ref{dispersion}) with $k=\frac{2\pi}{T}$ and $\operatorname{Im}c>0$. When
the condition (\ref{instability-T-i}) is satisfied, we have a neutral mode of
stability with $k_{0}=\left(  \frac{2\pi}{T_{i}}\right)  ^{2}$ and
$c_{0}=v_{i}$. Then local bifurcation of unstable modes near $\left(
k_{0},c_{0}\right)  $ can be shown, for example, by the arguments used in
\cite{lin-siam} for the shear flow instability. The bifurcation direction can
be seem from the following computation. Let $\left(  k,c\right)  $ be an
unstable mode near $\left(  k_{0},c_{0}\right)  .$Then
\[
k^{2}-k_{0}^{2}=\int\frac{f_{0}^{\prime}\left(  v\right)  }{v-c}dv-\int
\frac{f_{0}^{\prime}\left(  v\right)  }{v-v_{i}}dv=\left(  c-v_{i}\right)
\int\frac{f_{0}^{\prime}\left(  v\right)  }{\left(  v-v_{i}\right)  \left(
v-c\right)  }dv
\]
and by Plemelj formula when $\operatorname{Im}c\rightarrow0+,$%
\[
\frac{k^{2}-k_{0}^{2}}{c-v_{i}}=\int\frac{f_{0}^{\prime}\left(  v\right)
}{\left(  v-v_{i}\right)  \left(  v-c\right)  }dv\rightarrow P\int\frac
{f_{0}^{\prime}\left(  v\right)  }{\left(  v-v_{i}\right)  ^{2}}dv+i\pi
f_{0}^{\prime\prime}\left(  v_{i}\right)  ,
\]
where $P\int$ is the Cauchy principal value. So when $f_{0}^{\prime\prime
}\left(  v_{i}\right)  >0$ $\left(  <0\right)  $, we have to let $k^{2}%
<k_{0}^{2}\ \left(  k^{2}>k_{0}^{2}\right)  \ $to ensure $\operatorname{Im}%
c>0$. The linear stability when $T<T_{0}$ can be seem most easily from the
following Nyquist graph (see \cite{penrose}) in the complex plane
\begin{equation}
Z\left(  \xi+i0\right)  =\lim_{\eta\rightarrow0+}\int\frac{f_{0}^{\prime
}\left(  v\right)  }{v-\left(  \xi+i\eta\right)  }dv=P\int\frac{f_{0}^{\prime
}\left(  v\right)  }{v-\xi}dv+i\pi f_{0}^{\prime}\left(  \xi\right)  ,\ \xi
\in\mathbf{R}. \label{plemelj}%
\end{equation}
The unstable wave numbers consist of the part on the positive real axis
enclosed by the graph of $Z\left(  \xi+i0\right)  $. So the maximal unstable
wave number correspond to the right-most intersection point of the graph of
$Z\left(  \xi+i0\right)  $ with the positive real axis. Therefore if one of
the integral $\int\frac{f_{0}^{\prime}\left(  v\right)  }{v-v_{i}}dv$ is
positive, the maximal unstable wave number $k_{\max}$ is
\[
k_{\max}^{2}=\max_{v_{i}\in S}\int\frac{f_{0}^{\prime}\left(  v\right)
}{v-v_{i}}dv=\left(  \frac{2\pi}{T_{0}}\right)  ^{2},
\]
and all perturbations with $k>k_{\max}$ or equivalently $T<T_{0}$ are linearly
stable. For homogeneous states with all $\int\frac{f_{0}^{\prime}\left(
v\right)  }{v-v_{i}}dv$ to be non-positive, such as Maxwellian $e^{-\frac
{1}{2}v^{2}}$, perturbations of any period (wave number) are linearly stable
and thus $T_{0}=+\infty.$
\end{proof}

\begin{remark}
\label{rmk-penrose}1) The assumption $f_{0}\left(  v\right)  \in
W^{2,p}\left(  \mathbf{R}\right)  $ $\left(  p>1\right)  $ is used to ensure
that $f_{0}^{\prime}\left(  v\right)  $ is locally H\"{o}lder continuous and
thus the function $Z\left(  \xi+i0\right)  $ is well defined, continuous and
bounded. Lemma \ref{lemma-penrose} is still true for $f_{0}\in W^{1,p}$ and
$f_{0}^{\prime}$ locally H\"{o}lder continuous, particularly for $f_{0}\left(
v\right)  \in W^{s,p}\left(  \mathbf{R}\right)  $ $\left(  p>1,s>1+\frac{1}%
{p}\right)  $.

2) The local bifurcation of unstable modes near a neutral mode $\left(
k_{0},v_{i}\right)  $ can be extended globally in the following way. Let
$v_{i}$ be an extrema point of $f_{0}\left(  v\right)  ,\ $
\[
k_{0}^{2}=\left(  \frac{2\pi}{T_{i}}\right)  ^{2}=\int\frac{f_{0}^{\prime
}\left(  v\right)  }{v-v_{i}}dv>0.
\]
Suppose $f_{0}^{\prime\prime}\left(  v_{i}\right)  >0$, then the unstable
modes with $\operatorname{Im}c>0\ $exist when $k$ is slightly less than
$k_{0}$. This unstable mode can be continuated by decreasing $k$ as long as
the growth rate is not zero. This continuation process can only stop at
another neutral mode $\left(  k_{1},c_{1}\right)  $ with $k_{1}<k_{0}%
,\ c_{1}\in\mathbf{R,}.$ By (\ref{plemelj}), we must have
\[
f_{0}^{\prime}\left(  c_{1}\right)  =0,\ k_{1}^{2}=\left(  \frac{2\pi}{T_{1}%
}\right)  ^{2}=\int\frac{f_{0}^{\prime}\left(  v\right)  }{v-c_{1}}dv>0.
\]
For any wave number $k\in\left(  k_{1},k_{0}\right)  $, there exists an
unstable mode. Moreover, since the local bifurcation of unstable modes near
$k_{1}$ is only for slightly larger wave number, we must have $f_{0}%
^{\prime\prime}\left(  c_{1}\right)  <0$. Similarly, when $f_{0}^{\prime
\prime}\left(  v_{i}\right)  <0$, the unstable modes exist for wave numbers
$k\in\left(  k_{0},k_{2}\right)  $, where
\[
k_{2}^{2}=\left(  \frac{2\pi}{T_{2}}\right)  ^{2}=\int\frac{f_{0}^{\prime
}\left(  v\right)  }{v-c_{2}}dv,\ \text{with }f_{0}^{\prime}\left(
c_{2}\right)  =0,\ f_{0}^{\prime\prime}\left(  c_{2}\right)  >0\text{. }%
\]

From the above continuation argument, it is also easy to see the linear
stability for $k>k_{\max}$ without using the Nyquist graph. Suppose at some
$k^{\prime}>k_{\max}$ there exists an unstable mode. Then we can extend this
unstable mode for $k>k^{\prime}$ until it stops at a neutral mode$\ \left(
k^{\prime\prime},c^{\prime\prime}\right)  $ with
\[
\left(  k^{\prime\prime}\right)  ^{2}=\int\frac{f_{0}^{\prime}\left(
v\right)  }{v-c^{\prime\prime}}dv>0,\ f_{0}^{\prime}\left(  c^{\prime\prime
}\right)  =0.
\]
But $k^{\prime\prime}>k^{\prime}>k_{\max}$, this is a contradiction with the
definition of $k_{\max.}$ We also note that $k_{\max}$ must occur at a minimal
point of $f_{0}$, since the unstable modes only bifurcate for wave numbers
less than $k_{\max.}$

3) Finally, we point out that there could exist \textquotedblleft stability
gaps\textquotedblright\ of wave numbers in $\left(  0,k_{\max}\right)  $. By
our discussions above in 2), such stability gap must be of the form $\left(
\bar{k},\tilde{k}\right)  $ where
\[
\bar{k}^{2}=\int\frac{f_{0}^{\prime}\left(  v\right)  }{v-\bar{c}%
}dv>0,\ \tilde{k}^{2}=\int\frac{f_{0}^{\prime}\left(  v\right)  }{v-\tilde{c}%
}dv>0,
\]
and $\bar{c},\ \tilde{c}$ are minimum and maximum points of $f_{0}%
\ $respectively. From the Nyquist graph of $Z\left(  \xi+i0\right)  $, it is
easy to see that these stability gaps correspond to positive intervals in the
real axis not enclosed by the Nyquist curve.
\end{remark}

\begin{center}
{\Large Acknowledgement}
\end{center}

This work is supported partly by the NSF grants DMS-0908175 (Lin) and
DMS-0801319 (Zeng). We thank C\'{e}dric Villani for useful comments.

\end{document}